\newtheorem{teo}{Theorem}[section]
\newtheorem*{teo*}{Theorem}
\newtheorem{lem}[teo]{Lemma}
\newtheorem{cor}[teo]{Corollary}
\newtheorem{pro}[teo]{Proposition}
\theoremstyle{definition}
\theoremstyle{remark}
\newtheorem{nota}[teo]{Notation}
\newtheorem{rem}[teo]{Remark}
\def\n0{n_{ \text{\rm \tiny o}}}
\def\bce{\begin{center}}
\def\ece{\end{center}}
\def\cO{{\mathcal O}}
\def\cJ{{\mathcal J}}
\def\noi{\noindent}
\def\cF{\mathcal F}
\def\cG{\mathcal G}
\def\bm{\left[\begin{array}}
\def\em{\end{array}\right]}
\def\ben{\begin{enumerate}}
\def\een{\end{enumerate}}
\def\bit{\begin{itemize}}
\def\eit{\end{itemize}}
\def\barr{\begin{array}}
\def\earr{\end{array}}
\def\la{\lambda}
\def\R{\mathbb{R}}
\def\C{\mathbb{C}}
\def\Z{\mathbb{Z}}
\def\cJ{\mathcal{J}}
\def\cC{\mathcal{C}}
\def\cH{\mathcal{H}}
\def\cK{\mathcal{K}}
\def\cP{\mathcal{P}}
\def\cR{{\cal R}}
\def\cS{{\cal S}}
\def\cT{{\cal T}}
\def\cI{{\cal I}}
\def\cB{{\cal B}}
\def\cV{{\cal V}}
\def\cU{{\cal U}}
\def\cW{{\cal W}}
\def\cY{\mathcal{Y}}
\def\fS{\mathfrak{S}}
\DeclareMathOperator{\Tr}{Tr}
\def\beq{\begin{equation}}
\def\eeq{\end{equation}}
\def\CA{\cC_0(A)}
\def\FS{\fS}
\begin{document}
\title{Geometric approach to the Moore-Penrose inverse and the polar decomposition of perturbations by operator  ideals}
\author{Eduardo Chiumiento and Pedro Massey}
\date{}
\maketitle

\begin{abstract}
We study  the Moore-Penrose inverse of perturbations by a symmetrically-normed ideal of  a closed range operator on a Hilbert space. We show that  the notion of essential codimension of projections gives a characterization of  subsets of such perturbations in which the Moore-Penrose inverse is continuous with respect to the metric induced by the operator ideal. These subsets are maximal satisfying the continuity property,  and they carry the structure of real analytic Banach manifolds, which are  acted upon transitively by the Banach-Lie group consisting of invertible operators associated with the ideal.  This geometric construction allows us to prove that  the Moore-Penrose inverse is indeed a real bianalytic map between infinite-dimensional manifolds. We  use these results to study the polar decomposition of closed range operators from a similar geometric perspective.  At this point we prove 
 that operator monotone functions are real analytic in the norm of any symmetrically-normed ideal. Finally, we show that the maps defined by the operator modulus  and the polar factor in  the polar decomposition  of closed range operators are real analytic fiber bundles. 
 \end{abstract}

\bigskip

\noi {\bf 2020 MSC:}   Primary 22E65,  58B10  - Secondary 47A55, 47B10


\noi {\bf Keywords:} Moore-Penrose inverse;  polar decomposition; essential codimension;  symmetrically-normed ideal; homogeneous space; real analytic map.

\tableofcontents

\section{Introduction}
  Let $\cH$ be a separable complex infinite-dimensional Hilbert space, and let  $\cB(\cH)$ be the algebra  of bounded linear operators on $\cH$. We denote by   $\cC \cR \subset \cB(\cH)$ the set of closed range operators. Let $\fS$ be a symmetrically-normed ideal on $\cH$ equipped with a norm $\|  \, \cdot \,  \|_\fS$. 
For a fixed $A \in \cC \cR$, we consider the set of closed range operators that are perturbations of $A$ by operators in the ideal $\fS$, namely
  $$
  \cC \cR \cap (A+ \fS)=\{ B \in \cC \cR : B- A \in \fS \}.
  $$ 
A natural metric  is defined by   $  d_\fS(B_1 , B_2)=\| B_1 - B_2\|_\fS$, for $B_1 , B_2 \in \cC \cR \cap (A+ \fS)$.  For $X \in \cB(\cH)$, we write $X=V_X |X|$ for its (unique) polar decomposition, where $V_X$ is a partial isometry with the same nullspace as $X$, usually known as the polar factor, and $|X|=(X^*X)^{1/2}$ is the operator modulus.
Denote by $\cC \cR^+$ the set of closed range positive operators and $\cP \cI$ the set of partial isometries on $\cH$.     In the present paper we introduce a geometric framework to study the continuity and real analyticity of  the following maps:
\begin{itemize}
\item[] $\mu: \cC \cR \cap (A+ \fS) \to \cC\cR$, $\mu(B)=B^\dagger$ (Moore-Penrose inverse);
\item[] $\alpha: \cC \cR \cap (A+ \fS) \to \cC \cR^+$, $\alpha(B)=|B|$ (operator modulus);
\item[] $v:\cC \cR \cap (A+ \fS) \to \cP \cI$, $v(B)=V_B$ (polar factor).
\end{itemize}
Our geometric constructions to study these maps  fit into the context of  Banach manifolds related to operator theory; in particular, we deal with  real analytic homogeneous spaces of Banach-Lie groups
 associated with operator ideals (see, e.g., \cite{ALR10, B, BR05, BGJP, BL23, dH72, neeb98,  N00, Lar19}).    The notion of  essential codimension of a pair of projections \cite{BDF73}, or in other words the Fredholm index of a pair of projections \cite{AS94, ASS94}, plays a crucial role  throughout the present work. Remarkably, the essential  codimension appears as a useful tool in a variety of problems in operator theory and geometry such as  Kadison Pythagorean's Theorem \cite{KL17}, equivalence of quasi-free states \cite{SV78},  unitary equivalence of projections \cite{ASS94, S17}, geodesics in the Grassmann manifold and Toeplitz kernels \cite{A14, ACL18} and restricted diagonalization \cite{ECPM1, L19}.

     \medskip

\noi \textit{Previous related results.} The Moore-Penrose  inverse and the polar decomposition  are  ubiquitous  in linear algebra, matrix analysis and operator theory.   
 The continuity and differentiability of the Moore-Penrose inverse have been extensively discussed. The map defined by taking the Moore-Penrose inverse of complex matrices of size $d\geq 1$ is continuous at a matrix $A$ when one restricts its domain to the set of all the matrices with rank equal to $\mathrm{rank}(A)$. 
 Notice that the 
product group $\cG\ell(d)\times \cG\ell(d)$ of invertible matrices of size $d\geq 1$ acts transitively 
on the set of matrices of (constant) rank equal to $\mathrm{rank}(A)$ by $(G,K)\cdot A= GAK^{-1}$. 
Hence, the set of matrices of rank equal to $\mathrm{rank}(A)$ turns out to be a connected set that admits a real analytic manifold structure.   Furthermore,  for any differentiable map of a real parameter taking values in the manifold of matrices with constant rank, the composition of this map with the Moore-Penrose inverse is also differentiable. These results have interesting consequences in the perturbation theory  of matrices (see \cite{GPe73, S77, W73}).

  In the infinite-dimensional case, Labrousse and Mbekhta \cite{LMb92} proved that 
the maps given by the Moore-Penrose inverse and the polar factor are continuous at $A \in \cC \cR \subset \cB(\cH)$ if and only if $A$ is injective or surjective. Other works on the continuity of the Moore-Penrose inverse on Hilbert spaces deal with the reduced modulus minimum, or consider the projections onto the range or nullspace in place of the notion of rank \cite{CAp, CWS96, I83}.  More generally, we refer to \cite{Boa, K01, LR} for extensions of this circle of ideas to Banach algebras. Recently, the stability of the Moore-Penrose invertibility under compact perturbations has been studied in \cite{BJia}.

The theory of Banach-Lie groups and their homogeneous spaces provide an interesting point of view for understanding several objects in operator theory. In this direction we mention three concrete motivations for our work. First, the differential geometry of generalized inverses investigated by Andruchow, Corach and Mbekhta \cite{ACM05}.   Second, the study of several metrics on the set of closed range operators by Corach, Maestripieri and Mbekhta \cite{CMM09}. In particular, they introduced on $\cC \cR$ an action of the product group  of invertible operators $\cG \ell(\cH) \times \cG \ell(\cH)$, and they showed that for any $A \in \cC \cR$ the orbits
\begin{equation}\label{orbit gl}
\cO(A)=\{  GAK^{-1} :  G,K \in \cG \ell(\cH)  \}
\end{equation}
are topological homogeneous spaces. The metrics considered on $\cC \cR$  in this result are given by $d_R(B_1,B_2)=\|B_1 - B_2 \|  + \|P_{R(B_1)}   - P_{R(B_2)}\|$, or $d_N(B_1,B_2)=\|B_1 - B_2 \|  + \|P_{N(B_1)}   - P_{N(B_2)}\|$,  for $B_1, B_2 \in \cC \cR$, where the norm on each term is the operator norm. Here  $P_{N(B_i)}$ and  $P_{R(B_i)}$ denote the projections onto the nullspace and range of $B_i$, respectively. These metrics allow the construction of continuous local cross-sections for the action, defined in terms of the Moore-Penrose inverse. Third, the work on generalized inversion due to   Belti\c t$\breve{\text{a}}$,  Goli${\rm \acute{n}}$ski,  Jakimowicz and Pelletier \cite{BGJP}.  Recall that the invertible group of a Banach algebra is a manifold, which is actually an open set of the algebra, and the inversion map is   complex analytic on this manifold by the holomorphic functional calculus.  So the authors proposed to understand  the Moore-Penrose inverse in Banach algebras as an inversion with some pathologies. This lead them to an  application of  the theory of Banach-Lie groupoids, with particular emphasis on the case of $C^*$-algebras.

\medskip

\noi \textit{The results of this paper.} For a symmetrically-normed ideal $\fS$, we consider the Banach-Lie groups  $\cG \ell_\fS:=\cG \ell(\cH) \cap (I + \fS)$ and  $\cU_\fS:=\cU(\cH)  \cap (I + \fS)$, where $\cG \ell(\cH)$ and $\cU(\cH)$ are the full invertible and unitary groups, respectively.
 We denote by $[P:Q]$ the essential codimension of two orthogonal projections $P$ and $Q$. 
In this context, we consider perturbations of  closed range operators $\cC \cR$,  positive closed range operators $\cC \cR^+$ and partial isometries $\cP \cI$ by the symmetrically-normed  ideal $\fS$. For  fixed operators $A \in \cC \cR$, $C \in \cC \cR^+$ and $V \in  \cP \cI$, we will show that their set of perturbations can be decomposed as the following disjoint unions $\cC \cR \cap (A+ \fS)=\cup_{k \in \mathbb{J}_A} \cC_k(A)$,  $\cC \cR^+ \cap (C+ \fS)=\cup_{k \in \mathbb{J}_C} \cP_k(C)$ and 
$\cP \cI  \cap (V+ \fS)=\cup_{k \in \mathbb{J}_V} \cV_k(V)$, where the sets on the unions are defined using the essential codimension as
\begin{align*}
 \cC_k(A) &  := \{  B \in \cC \cR \cap (A+ \fS) :\, [P_{N(B)}: P_{N(A)}] =k \} ;\\
 \cP_k(C) &:=\{  D \in \cC \cR^+ \cap (C+ \fS) :\, [P_{N(D)}: P_{N(C)}] =k \} ;  \\
\cV_k(V) & := \{  X \in \cP \cI \cap (V+ \fS) :\, [P_{N(X)}: P_{N(V)}] =k \}.
\end{align*}
The set of indices $ \mathbb{J}_{A}$, $ \mathbb{J}_{C}$ and $ \mathbb{J}_{V}$ in the previous unions are always infinite subsets of $\Z$, and depend on the dimension of the nullspace,  range and corange of the operators $A$, $C$ and $V$. 

The main results of this paper are the following: 
\begin{itemize}
\item The sets $\cC_k(A)$ and $\cP_k(C)$ are Banach manifolds. Indeed, they admit the structure of real analytic homogeneous spaces, which are also submanifolds of natural affine spaces
 (Theorems \ref{teo ck es esp hom} and \ref{estru pk real analitica}).
\item The map $\mu: \cC_k(A) \to \cC_k(A^\dagger)$, $\mu(B)=B^\dagger$, is a real bianalytic map between Banach manifolds (Theorem \ref{teo pseudo in real analytic}).
\item The maps $\alpha: \cC_k(A)\to \cP_k(|A|)$, $\alpha(B)=|B|$, and  $v:\cC_k(A) \to \cV_k(V_A)$, $v(B)=V_B$, are real analytic fiber bundles between Banach manifolds  (Theorem \ref{teo hay fibras}). 
\end{itemize}
Before proving these results, which hold for every $k \in \mathbb{J}_A$, we show that these maps are (well-defined and) continuous. Indeed, the choice of the sets of the form $\cC_k(A)$ is not arbitrary, 
 these are dense connected subsets of $\cC \cR \cap (A + \fS)$, which are  maximal with respect to the continuity property of the Moore-Penrose inverse (Theorems \ref{pseudo conv} and \ref{index decomp connected}). The sets $\cC_k(A)$ can be roughly described as formed by those closed range operators that are perturnations of $A$ be elements in $\fS$ and have `the same rank with respect to $A$'; these are {\it local} conditions induced by $A$ and $\fS$ (as opposed to the condition of merely having a fixed rank of fixed nullity).  To the best of our knowledge,  the above results on these three maps are also new in the context of  finite-dimensional manifolds. 

The Banach manifold structures of the sets $\cC_k(A)$, $\cP_k(C)$ and $\cV_k(V)$ are not evident from their definitions.
   Each $\cC_k(A)$ admits a transitive action of $\cG \ell_\fS \times \cG \ell_\fS$ which consists of restricted versions of the orbits in \eqref{orbit gl}. In particular, $\cC_0(A)$ (the set containing $A$)  has the following characterization:
\begin{equation}\label{glj orbit}
\cC_0(A)=\{  GAK^{-1} :  G,K \in \cG \ell_\fS  \}.
\end{equation}
 This fact is related to our previous work on restricted orbits of closed range operators \cite{ECPM23}. 
Similar results for unitary orbits, associated to operator ideals, of normal operators have been recently obtained in \cite{BL23}.
In contrast to the larger orbits in \eqref{orbit gl}, there is no need to introduce metrics such as $d_R$ or $d_N$ to construct continuous local cross sections for the map $\pi_0: \cG \ell_\fS \times \cG  \ell_\fS \to \cC_0(A)$, $\pi_0(G,K)=GAK^{-1}$. Once these sections are constructed, we can further endow  an orbit like \eqref{glj orbit} with the structure of real analytic homogeneous space that is also a submanifold of $A+\fS$.
On the other hand, the Banach manifold structure of the sets $\cP_k(C)$ is given in terms of congruence orbits of the group $\cG \ell_\fS$ (Theorem  \ref{estru pk real analitica}). For instance when $k=0$, we obtain
\begin{equation}\label{cong orb rest}
\cP_0(C)=\{  GC G^* :  G \in \cG \ell_\fS  \}.
\end{equation}
The motivation for considering the restricted orbits in \eqref{glj orbit} and \eqref{cong orb rest} comes from previous work on partial isometries \cite{EC10, EC19}, where the above defined sets $\cV_k(V)$, $k \in \mathbb{J}_V$, were proved to be orbits of the product group $\cU_\fS \times \cU_\fS$.   
Finally,  we observe that  the real analyticity of the operator modulus 
depends on the real analyticity of the square root on the sets $\cP_k(C)$. We present a more general statement for operator monotone functions in Corollary \ref{teo sobre AvH2}. This follows from Theorem \ref{teo sobre AvH}, which in turn is based on earlier work of Ando and van Hemmen on perturbations by symmetrically-normed ideals \cite{AvH}.

The paper is organized as follows. Section \ref{sec prelis} is devoted to notation and preliminary results.   In Section  \ref{Moore-Penrose inv}  we  introduce several geometric structures and prove the main results on the Moore-Penrose inverse.  In Section \ref{polar decomposition} we establish the results on operator monotone functions, the operator modulus and  the polar factor.

\section{Preliminaries}\label{sec prelis}

Let $\cH$ be an infinite-dimensional (complex separable) Hilbert space, and let $\cB(\cH)$ be the algebra of bounded operators on $\cH$. Given $A \in \cB(\cH)$ we write $N(A)$ and $R(A)$ for the nullspace and range of $A$, respectively. 
The orthogonal projection onto a closed subspace $\cS$ is denoted by $P_\cS$.  

\medskip

\noi \textit{Moore-Penrose inverse and polar decomposition.} The set of all closed range operators on $\cH$ is given by 
$$
\cC\cR=\{ A \in \cB(\cH)\, : \, R(A) \text{ is a closed subspace} \}.
$$
An operator $B \in \cB(\cH)$ is said to be   the  \textit{Moore-Penrose inverse} of $A \in \cB(\cH)$ if it satisfies that $ABA=A$, $BAB=B$, $(AB)^*=AB$ and $(BA)^*=BA$. 
If  the Moore-Penrose exists, then it is uniquely determined, and we denote it by $B=A^\dagger$. It is not difficult to check that $A \in \cB(\cH)$ admits a Moore-Penrose inverse 
if and only if $A \in \cC \cR$. Furthermore, $AA^\dagger=P_{R(A)}$ and $A^\dagger A=P_{N(A)^\perp}$, whenever $A \in \cC \cR$.
The following useful identity was proved by Wedin \cite{W73}:
\begin{equation}\label{stewart identity}
A^\dagger - B^\dagger=-A^\dagger(A-B)B^\dagger + (A^* A)^\dagger (A^* - B^*)(I - BB^\dagger) + (I-A^\dagger A)(A^* - B^*) (BB^*)^\dagger  \,.
\end{equation}  
For $A \in \cB(\cH)$, $A\neq 0$, the reduced minimum modulus  is  given by $\gamma(A)=\min_{\lambda \in \sigma(|A|)\setminus \{ 0\}} \lambda$, where $\sigma(|A|)$ is the spectrum of $|A|$. Equivalently, $\gamma(A)=\inf\{ \|Af\| :  f \in N(A)^\perp \, , \, \|f\|=1\}$.  Recall that for $A \in \cB(\cH)$, we have  $A \in \cC \cR$ if and only if $\gamma(A)>0$. In such case,   $\gamma(A)=\|A^\dagger\|^{-1}$, where 
$\| \, \cdot \, \|$ denotes the operator norm. Also, it holds 
$\gamma(A)=\gamma(|A|)=\gamma(|A^*|)=\gamma(A^*)$, which in particular gives that $A^*$, $|A|$ and $|A^*|$ have closed range if $A$ has closed range. 

 An operator $X \in \cB(\cH)$  is  a  partial isometry if $\|X f \|=\|f\|$, for all $f \in N(X)^\perp$. This is equivalent to having that $XX^*$ is an (orthogonal) projection, or $X^*X$ is an (orthogonal) projection. We write 
 $$
 \cP \cI= \{  X \in \cB(\cH) :   X \text{ is  partial isometry} \}.
 $$ 
 The \textit{polar decomposition} of an operator $A \in \cB(\cH)$ is the factorization $A=V_A |A|$, where $|A|=(A^*A)^{1/2}$ is the \textit{operator modulus} and $V_A\in \cP\cI$ is the unique partial isometry that further satisfies the condition $N(V_A)=N(A)$. In the case where $A \in \cC \cR$, we observe that $|A|\in\cC\cR$ is such that $V_A=A|A|^\dagger$, and we call $V_A$ the \textit{polar factor}.

\medskip

\noi 
\textit{Symmetrically-normed ideals.} We follow the classical book \cite{GK60} (see \cite{Dyk04, S79}).
 A {\it symmetrically-normed ideal} is a two-sided ideal $\fS\subseteq \cB(\cH)$  
endowed with a norm $\|\,\cdot\, \|_\fS$ satisfying $\|ABC\|_\fS \leq \|A\|\| B\|_\fS \|C\|$, for all $A,C \in \cB(\cH)$ and $B \in \fS$; and $\|B\|_\fS=\|B\|$, for every rank-one operator $B$. We also assume that $(\fS,\, \|\,\cdot\, \|_\fS)$ is a Banach space. 
The previous facts imply that $ \|B \| \leq \|B\|_\fS$, for all $B \in \fS$ and that 
$\| B_1 B_2  \|_\fS \leq \|B_1 \|_\fS \|B_2 \|_\fS$, for $B_1 , B_2 \in \fS$, i.e. the norm of the ideal is submultiplicative. 
Recall that $\cF \subseteq \fS \subseteq \cK$, for every symmetrically-normed ideal, where $\cF=\cF(\cH)$ is the ideal of finite-rank operators and $\cK=\cK(\cH)$ is the ideal of compact operators. 
The $p$-Schatten $\fS_p$ ($1 \leq p \leq \infty$) are well-known examples of symmetrically-normed ideals, whose  norms are given by $\|A\|_p=\Tr(|A|^p)^{1/p}=(\sum_{n \geq 1}s_n^p(A))^{1/p}$, $p\geq 1$; and for $p=\infty$, $\fS_\infty=\cK$ endowed with the usual operator norm $\|A\|_\infty=\|A\|=s_1(A)$.
Here $s(A)=\{ s_n(A)\}_{n \geq 1}$ is the sequence of singular values of $A$ arranged in non-increasing order and counting multiplicities. Other examples of symmetrically-normed ideals can be found in the aforementioned references.
\medskip

\noi \textit{Essential codimension.} Next we recall the notion of essential codimension (see \cite{AS94, ASS94, BDF73}). Let $P,Q \in \cB(\cH)$ be two orthogonal projections such that  the operator $QP|_{R(P)}:R(P)\to R(Q)$ is Fredholm. In this case,  $(P,Q)$ is known as a Fredholm pair and the index of this Fredholm operator  
\begin{align*}
[P:Q]& :=\mathrm{Ind}(QP|_{R(P)}:R(P)\to R(Q)) \label{index pair}\\
& = \dim(N(Q)\cap R(P)) - \dim(R(Q)\cap N(P)) \nonumber
\end{align*}
is called the \textit{essential codimension} (or \textit{Fredholm index of the pair}). We will often have two projections such that  $P- Q \in \cK$. In such a case,  it is easy to see that $(P,Q)$ is a Fredholm pair, and the essential codimension is well-defined. Among some elementary properties of the essential codimension that we will use frequently are the following:  $[P:Q]=-[Q:P]$; if $(P_i,Q_i)$, $i=1,2$, are two Fredholm pairs such that  $P_1P_2=0$ and $Q_1Q_2=0$, then $(P_1+ P_2,Q_1+Q_2)$ is a Fredholm pair and
$[P_1 + P_2:Q_1 + Q_2]=[P_1:Q_1] + [P_2:Q_2]$; and if $(P,Q)$ and $(Q,R)$ are Fredholm pairs, 
and either $Q-R \in \cK$ or $P-Q \in \cK$, then $(P,R)$ is a Fredholm pair and $[P:R]=[P:Q] + [Q:R]$.

\medskip

\noi \textit{Banach manifolds.} We consider real analytic manifolds modeled on Banach spaces (see \cite{B, Up85}). Given $M$, $N$ manifolds and a real analytic map $f:M \to N$, we denote
by $T_p f:(TM)_p \to (TN)_{f(p)}$ the tangent map at $p \in M$, where $(TM)_p$  and  $(TN)_{f(p)}$ are the tangent spaces of $M$ at $p$ and $N$ at $f(p)$. A bijective map $f:M \to N$  is real bianalytic if $f$ and $f^{-1}$ are real analytic.
A real analytic map $f:M \to N$ is called a submersion at $p \in M$ if $N(T_p f)$ is a closed complemented subspace of $(TM)_p$ and $T_p f$ is surjective. If $f:M \to N$ is a submersion at every point $p \in M$, then $f$ is called a submersion. A real Banach-Lie group is a real analytic Banach manifold $G$ such that the group multiplication $G \times G \to G$, $(g,h) \mapsto gh$, and the inverse $G \to G$, $g \mapsto g^{-1}$, are real analytic maps. The construction of the Lie algebra $\mathfrak{g}\simeq (TG)_1$ and  the exponential map $\exp_G: \mathfrak{g} \to G$ of a Banach-Lie group $G$ can be carried out similarly to the case of finite-dimensional Lie groups. Also the exponential map $\exp_G: \mathfrak{g} \to G$ is  a local bianalytic map.  An action of a Banach-Lie group $G$ on a manifold $M$ is a map $L:G \times M \to M$, $L(g, p)=g \cdot p$, $g \in G$ and $p \in M$, such that $h \cdot (g \cdot p)=(hg) \cdot p$ and $1 \cdot p=p$, for all $h,g \in G$ and $p \in M$.  The action is said to be real analytic if the map $L$ is real analytic. A \textit{real analytic homogeneous space of a Banach-Lie group $G$} is a manifold $M$ such that $G$ acts transitively and analytically on $M$, and there exists $p \in M$ such  that the map $\pi_p:G \to M$, $\pi_p(g)=g \cdot p$, is a submersion.

Let $M$ be a manifold, and $N \subseteq M$. A chart $(\phi, \cV, E)$  at $p \in M$ consists in an open neighborhood $\cV$ of $p$, a Banach space $E$ and a homeomorphism $\phi: \cV \to \phi(\cV)\subseteq E$. If for every $p \in N$ there exists a chart $(\phi, \cV, E)$ at $p$, and a closed subspace $F$ complemented in $E$ satisfying
$\phi(\cV \cap N)=F \cap \phi(\cV)$, then $N$ is called a \textit{submanifold} of $M$. In this case, $N$ turns out to be a manifold endowed with the topology inherited from $M$. If $H$ is a subgroup of a Banach-Lie group $G$, then $H$ is said to be a \textit{Banach-Lie subgroup} of $G$ when $H$ is a submanifold of $G$.    

Let $M$ and $N$ be two manifolds.  A \textit{real analytic fiber bundle} is a  real analytic surjective map $f:M \to N$ such that for every $p \in N$ then $f^{-1}(p)$ is a manifold, and there exists an open neighborhood $\cV$ of $p$ and a real bianalytic map $\Psi: f^{-1}(\cV) \to \cV \times f^{-1}(p)$ such that $\pi_1 \circ \Psi=f$, where $\pi_1: \cV \times f^{-1}(p) \to \cV$ is the canonical projection. In such case, 
$f$ turns out to be a submersion.

\medskip

 \medskip

\noi \textit{Restricted groups}.  Let $\cG \ell(\cH)$ be the group of invertible operators on $\cH$. For each symmetrically-normed ideal $\fS$ there is associated the following group
$$
\cG \ell_{\fS}:=\{  G \in \cG \ell(\cH) : G- I \in \fS \}.
$$
Also each symmetrically-normed ideal $\fS$ gives raise to a subgroup of the full unitary group $\cU(\cH)$ defined by
$$
\cU_{\fS}:=\{  U \in \cU(\cH) : U - I \in \fS \}.
$$
For a standard reference  for   these groups in the case of the $p$-Schatten ideals see  \cite{dH72}, meanwhile for the case of general symmetrically-normed ideals see \cite{B}.

\begin{rem}\label{restricted groups}
We collect here several properties of the groups defined above. In what follows we let $\fS$ denote a symmetrically-normed operator ideal.

\medskip

\noi $i)$ If $P$, $Q$ are orthogonal projections, then there is a unitary operator $U \in \cU_{\fS}$
such that $Q=UPU^*$ if and only if $P-Q \in \fS$ and $[P:Q] = 0$  (see  \cite[Prop. 3.6]{C85}, or more generally, \cite[Prop 2.3]{L19}).

\medskip


\noi $ii)$  $\cG \ell_{\fS}$ is a real Banach-Lie group endowed with the metric $d_\fS(G,K)=\|G-K\|_\fS$,  for $G, K \in \cG \ell_{\fS}$, whose  Lie algebra is  $\fS$.  Next, consider the unitalization 
$\tilde{\fS}=\{ X + \lambda I : X \in \fS,  \, \lambda \in  \C\}\simeq \fS \oplus \C $.  Each element $Z \in \tilde{\fS}$ is written as $Z=X + \lambda I$, for uniquely determined $X \in \fS$ and $\lambda \in \C$, and $\tilde{\fS}$ is equipped with the norm $\|X + \lambda I\|_{\tilde{\fS}}:=  \|X\|_\fS + |\lambda|$. In this case, $\tilde{\fS}$ is a unital Banach algebra. 
We can embed into $\cG \ell_\fS$ in 
$\tilde{\fS}$ by the identification $T\mapsto (T-I)  + I \in \tilde{\fS}$.  Then $\cG \ell_\fS$ is a Lie subgroup of the Banach-Lie group of invertible elements of $\tilde{\fS}$ having real codimension 2. 
On the other hand, $\cU_\fS$ is a Banach-Lie subgroup of $\cG \ell_\fS$, whose Lie algebra is $\fS_{ah}=\{ X \in \fS  :  X^*=-X \}$, the anti-hermitian operators in $\fS$  (\cite[Prop 9.28]{B}).
The exponential maps of these Lie groups are
given by   $\exp_{\cG \ell_{\fS}}: \fS \to  \cG \ell_{\fS}$, $\exp_{\cG \ell_{\fS}}(X)=e^X=\sum_{n \geq 0}\frac{X^n}{n!}$ and  $\exp_{\cU_{\fS}}=\exp_{\cG \ell_{\fS}}|_{\fS_{ah}}$.

\medskip

\noi $iii)$ The exponential map of $\cG \ell_\fS$ is surjective. We give a proof since we do not find references to this fact.  For  $G \in \cG \ell_\fS$, $G-I \in \fS \subseteq \cK$ yields that $\sigma(G)$ is a  countable set having $1$ as its unique limit point.  Thus, there is a ray $L$  from the origin such that $\sigma(G) \subseteq \C \setminus L$, and the analytic logarithm $\log: \C \setminus L \to \{ z :  \theta - 2 \pi < \arg(z) \leq \theta   \}$ is well-defined, where $\theta \in [0,2 \pi)$ is defined by the ray $L$.  Denote by $\sigma_{\tilde{\fS}}(G)$ the spectrum of $G$ in the Banach algebra $\tilde{\fS}$. Observe that  $\sigma_{\tilde{\fS}}(G)=\sigma(G)$, so we can use the analytic functional calculus in $\tilde{\fS}$ to get $e^{\log(G)}=G$. That is, $\log(G)=X+\lambda I \in \tilde{\fS}$, $X \in \fS$ and $\lambda \in \C$, satisfies $G=e^{X + \lambda I}=e^X e^\lambda$. But $G=(e^{X} -I) e^{\lambda} + e^{\lambda}I$, with $e^X-I\in\fS$. Hence, the uniqueness of writing in $\tilde{\fS}$ gives $e^\lambda=1$, so $G=e^X$. 
\end{rem}

\section{Moore-Penrose inverse}\label{Moore-Penrose inv}

We first study the continuity of  the Moore-Penrose inverse.  Then we prove that 
the maximal sets in which it  is continuous admit the structure of Banach manifolds. This is achieved by using the theory of Banach-Lie groups and their homogeneous spaces. We conclude 
that the Moore-Penrose inverse is a real bianalytic map between Banach manifolds.

\subsection{Continuity of the Moore-Penrose inverse}\label{sec cont pert}

We recall an  estimate for matrices with equal rank and some elementary facts obtained in \cite{ECPM23}.

\begin{lem}[\cite{W73}]\label{wedin lema}
Suppose that $A,B$ are matrices such that $\mathrm{rank}(A)=\mathrm{rank}(B)$ and $\| A-B\| < \|A^\dagger\|^{-1}$, then
$$
\|B^\dagger\| \leq \frac{\|A^\dagger\|}{1-\|A^\dagger\|\|A-B\|}.
$$
\end{lem}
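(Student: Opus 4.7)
The plan is to reduce the estimate on $\|B^\dagger\|$ to a perturbation bound for the smallest nonzero singular value. Set $r:=\mathrm{rank}(A)=\mathrm{rank}(B)$, and let $\sigma_1(X)\geq \sigma_2(X)\geq \dots$ denote the singular values of a matrix $X$. Under the rank hypothesis we have $\sigma_r(A)>0=\sigma_{r+1}(A)$ and $\sigma_r(B)>0=\sigma_{r+1}(B)$, so
$$\|A^\dagger\|=\sigma_r(A)^{-1} \quad\text{and}\quad \|B^\dagger\|=\sigma_r(B)^{-1}.$$
Thus the inequality to prove is equivalent to a lower bound on $\sigma_r(B)$ in terms of $\sigma_r(A)$ and $\|A-B\|$.

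First, I would invoke Weyl's perturbation inequality for singular values, which states $|\sigma_k(B)-\sigma_k(A)|\leq \|A-B\|$ for every $k\geq 1$. Applied with $k=r$ this yields
$$\sigma_r(B)\geq \sigma_r(A)-\|A-B\|.$$
Next, the standing assumption $\|A-B\|<\|A^\dagger\|^{-1}=\sigma_r(A)$ guarantees that the right-hand side is strictly positive, so $\sigma_r(B)>0$ (which is of course already known from $\mathrm{rank}(B)=r$, but the positivity of the lower bound is what matters here).

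Finally, taking reciprocals and multiplying numerator and denominator by $\|A^\dagger\|=\sigma_r(A)^{-1}$ produces
$$\|B^\dagger\|=\frac{1}{\sigma_r(B)}\leq \frac{1}{\sigma_r(A)-\|A-B\|}=\frac{\|A^\dagger\|}{1-\|A^\dagger\|\,\|A-B\|},$$
which is the claim. The only substantive ingredient is Weyl's inequality for singular values (a classical consequence of the min-max characterization); everything else is bookkeeping, so I do not anticipate a serious obstacle. An alternative route would be to start from the Wedin identity \eqref{stewart identity} and estimate $\|B^\dagger-A^\dagger\|$ directly, but isolating $\|B^\dagger\|$ from such an inequality requires an extra absorption argument, whereas the singular value approach produces the sharp constant in one line.
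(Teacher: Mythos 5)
Your proof is correct: the identity $\|X^\dagger\|=\sigma_{\mathrm{rank}(X)}(X)^{-1}$, Weyl's perturbation bound $|\sigma_r(A)-\sigma_r(B)|\le\|A-B\|$, and the hypothesis $\|A-B\|<\sigma_r(A)$ combine exactly as you say, and the equal-rank assumption is used precisely where it must be (to identify $\|B^\dagger\|^{-1}$ with the $r$-th singular value of $B$). The paper states this lemma only with a citation to Wedin and gives no proof, so there is nothing to compare against; your argument is the standard derivation of this classical estimate.
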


\begin{lem}[\cite{ECPM23}]\label{ab y moore penrose} Let $\fS$ be a symmetrically-normed ideal
and take  $A,B \in \cC\cR$ be such that $A-B \in \fS$. Then $A^\dagger - B^\dagger \in \fS$,    $P_{R(A)}-P_{R(B)} \in\fS$ and $P_{N(A)}-P_{N(B)} \in \fS$.
\end{lem}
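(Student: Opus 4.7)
The plan is to read off all three membership statements from Wedin's identity \eqref{stewart identity} combined with the two-sided ideal property of $\fS$ and the identities $AA^\dagger = P_{R(A)}$, $A^\dagger A = P_{N(A)^\perp}$.

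First I would observe that since $A, B \in \cC\cR$, all the Moore-Penrose inverses $A^\dagger, B^\dagger, (A^*A)^\dagger, (BB^*)^\dagger$ exist and are bounded operators on $\cH$, and so are $I - BB^\dagger = P_{R(B)^\perp}$ and $I - A^\dagger A = P_{N(A)}$. Also $A^* - B^* = (A-B)^* \in \fS$ because $\fS$ is $*$-invariant (the hypothesis $A-B \in \fS$ together with the fact that the singular values are preserved under adjunction makes $\fS$ closed under $*$). Now apply Wedin's identity
$$A^\dagger - B^\dagger = -A^\dagger(A-B)B^\dagger + (A^*A)^\dagger(A^* - B^*)(I - BB^\dagger) + (I - A^\dagger A)(A^* - B^*)(BB^*)^\dagger.$$
Each of the three summands on the right is a product of bounded operators in which one factor ($A - B$ or $A^* - B^*$) lies in $\fS$; since $\fS$ is a two-sided ideal of $\cB(\cH)$, each summand lies in $\fS$, and hence $A^\dagger - B^\dagger \in \fS$.

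Next I would obtain the projection statements by the two telescoping identities
$$P_{R(A)} - P_{R(B)} = AA^\dagger - BB^\dagger = A(A^\dagger - B^\dagger) + (A-B)B^\dagger,$$
$$P_{N(A)^\perp} - P_{N(B)^\perp} = A^\dagger A - B^\dagger B = (A^\dagger - B^\dagger)A + B^\dagger(A - B).$$
On the right-hand sides, the already established $A^\dagger - B^\dagger \in \fS$ and the hypothesis $A - B \in \fS$, together with the ideal property, put everything in $\fS$. Finally, the identity $P_{N(A)} - P_{N(B)} = -(P_{N(A)^\perp} - P_{N(B)^\perp})$ yields the third assertion.

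There is no real obstacle here: the argument is essentially a one-line application of Wedin's identity, with the only subtlety being to invoke the two-sided ideal structure (and $*$-closedness) of $\fS$ so that all six ``bad'' factors are absorbed. The statement is included mainly as a preparatory remark for the rest of Section \ref{sec cont pert}, where the quantitative estimate of Lemma \ref{wedin lema} will be the substantive input.
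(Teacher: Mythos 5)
Your proposal is correct and follows exactly the paper's own argument: Wedin's identity \eqref{stewart identity} combined with the two-sided ideal (and $*$-closed) structure of $\fS$ gives $A^\dagger - B^\dagger \in \fS$, and the projection statements then follow from $P_{R(A)} = AA^\dagger$ and $P_{N(A)^\perp} = A^\dagger A$ via the same telescoping. The extra details you supply (the $*$-invariance of $\fS$ and the explicit telescoping identities) are precisely what the paper leaves implicit.
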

\begin{proof}
We include a short  proof of this result for the convenience of the reader. From Wedin's formula in Eq.  \eqref{stewart identity} we get $A^\dagger - B^\dagger \in \fS$. The other assertions follow by using that $P_{N(A)^\perp}=A^\dagger A$ and 
$P_{R(A)}=AA^\dagger$.
\end{proof}

We now present a generalization of Wedin's estimate in Lemma \ref{wedin lema} in terms of the essential codimension.

\begin{pro}\label{indexbound}
Consider operators $A,B \in \cC\cR$ satisfying $\|A-B\| <\|A^\dagger\|^{-1}$, $A-B \in \cK$ and  $[P_{N(A)} : P_{N(B)}]=0$. Then,
$$
\|B^\dagger\| \leq \frac{\|A^\dagger\|}{1-\|A^\dagger\|\|A-B\|}.
$$
\end{pro}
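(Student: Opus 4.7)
The plan is to show that, under the hypotheses, the restriction $B|_{N(A)^\perp}:N(A)^\perp\to R(B)$ is a bounded-below bijection; the bound on $\|B^\dagger\|$ will then follow by comparing the inverse of this restriction with $B^\dagger$.

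First I would establish the standard lower bound on $N(A)^\perp$: for $f\in N(A)^\perp$ with $\|f\|=1$, the triangle inequality gives
\[
\|Bf\|\geq \|Af\|-\|(A-B)f\|\geq \gamma(A)-\|A-B\|>0.
\]
Thus $B|_{N(A)^\perp}$ is bounded below by $\gamma(A)-\|A-B\|$ and, in particular, $N(B)\cap N(A)^\perp=\{0\}$.

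The crucial step is upgrading this injectivity to surjectivity onto $R(B)$, and here the essential codimension hypothesis enters. By Lemma \ref{ab y moore penrose}, $P_{N(A)}-P_{N(B)}\in\cK$, so the pair $(P_{N(A)},P_{N(B)})$ is Fredholm; the condition $[P_{N(A)}:P_{N(B)}]=0$ then says that the Fredholm operator $\Theta:N(B)\to N(A)$, $\Theta(u)=P_{N(A)}u$, has index zero. Combined with $\ker\Theta=N(B)\cap N(A)^\perp=\{0\}$ from the previous paragraph, index-zero forces $\Theta$ to be surjective. Given $g=Bh\in R(B)$ with $h\in N(B)^\perp$, write $h=h_1+h_2$ with $h_1\in N(A)^\perp$ and $h_2\in N(A)$; by surjectivity of $\Theta$ there exists $k\in N(B)$ with $P_{N(A)}k=h_2$, and setting $k_1=k-h_2\in N(A)^\perp$ together with $Bk=0$ gives $Bh_2=-Bk_1$, whence $g=B(h_1-k_1)$ with $h_1-k_1\in N(A)^\perp$. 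Therefore $B|_{N(A)^\perp}:N(A)^\perp\to R(B)$ is bijective with bounded inverse $T$ satisfying $\|T\|\leq 1/(\gamma(A)-\|A-B\|)$.

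To conclude, I would compare $T$ with $B^\dagger$: for any $g\in R(B)$, both $T(g)$ and $B^\dagger g$ solve $Bx=g$, so their difference lies in $N(B)$; since $B^\dagger g\in N(B)^\perp$, the Pythagorean identity yields $\|B^\dagger g\|\leq \|T(g)\|\leq \|T\|\|g\|$. Because $B^\dagger$ vanishes on $R(B)^\perp$, this extends to all $g\in\cH$ via $B^\dagger g=B^\dagger P_{R(B)}g$, giving exactly the stated Wedin-type bound. The conceptual heart of the argument is the Fredholm-theoretic step: the condition $[P_{N(A)}:P_{N(B)}]=0$ is precisely what is needed to make $B|_{N(A)^\perp}$ cover all of $R(B)$, which bypasses any direct singular-value perturbation inequality in infinite dimensions.
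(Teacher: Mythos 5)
Your proof is correct, and it takes a genuinely different route from the paper's. The paper reduces to the finite-dimensional case: it picks finite-rank projections $E_n\nearrow P_{N(A)^\perp}$, uses a unitary $L\in\cU_\cK$ with $LP_{N(A)^\perp}L^*=P_{N(B)^\perp}$ (which exists precisely because the essential codimension vanishes) to produce $F_n=LE_nL^*$, applies Wedin's equal-rank matrix estimate to $A_n=AE_n$ and $B_n=BF_n$, and then passes to the limit via strong convergence $B_n^\dagger\to B^\dagger$. You instead argue directly on $\cH$: the triangle inequality gives that $B$ is bounded below by $\gamma(A)-\|A-B\|$ on $N(A)^\perp$, the index-zero hypothesis upgrades the resulting injectivity of $P_{N(A)}|_{N(B)}$ to surjectivity and hence forces $B(N(A)^\perp)=R(B)$, and the minimal-norm property of the Moore--Penrose solution converts the bound on the inverse of $B|_{N(A)^\perp}$ into the bound on $\|B^\dagger\|$ (using $\gamma(A)=\|A^\dagger\|^{-1}$ to rewrite the constant). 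All steps check: in particular $\ker(P_{N(A)}|_{N(B)})=N(B)\cap N(A)^\perp=\{0\}$ does follow from your first paragraph, and the algebra producing $g=B(h_1-k_1)$ with $h_1-k_1\in N(A)^\perp$ is sound. Your argument is more self-contained (no matrix lemma, no approximation or strong-limit passage, no appeal to the unitary conjugation of projections) and it isolates exactly where the essential-codimension hypothesis is used; the paper's approach has the advantage of leaning on the classical finite-dimensional estimate and of introducing the approximation machinery it reuses nearby.
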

\begin{proof}
Since $A^*- B^* \in \cK$, then $P_{N(A)^\perp} - P_{N(B)^\perp } \in \cK$ by Lemma \ref{ab y moore penrose}. From Remark \ref{restricted groups} $i)$ applied to the ideal of compact operators, we know that there exists a unitary $L \in \cU_\cK$ such that $LP_{N(A)^\perp}L^*=P_{N(B)^\perp}$. Next pick $\{ E_n \}_{n \geq 1}$ a sequence of finite-rank projections such that $E_n \leq P_{N(A)^\perp}$ and $E_n \nearrow P_{N(A)^\perp}$ strongly. We set $F_n=L E_n L^*$, $B_n=BF_n$ and $A_n=AE_n$, for all $n  \geq 1$. Notice that $\mathrm{rank}(A_n)=\mathrm{rank}(B_n)$, and also $A_n - B_n \to A-B$ strongly.  Further, we observe that $$A_n- B_n=AE_n -BLE_nL^*=(A-B)E_n- B(L-I)E_nL^* - BE_n(L^*-I)\, ,$$ where each term is multiplied by a compact operator. Thus, we get $\| A_n - B_n -(A-B)\| \to 0$ by a well-known result (see, e.g. \cite[Thm. 6.3]{GK60}). On the other hand, since $A_n^* A_n=E_n A^*A E_n \geq \gamma(A) E_n P_{N(A)^\perp} E_n= \gamma(A) E_n$ and $N(A_n)^\perp=R(E_n)$, then $ \|A_n^\dagger\|^{-1}=\gamma(A_n) \geq  \gamma(A)=\|A^\dagger\|^{-1}$. 

Therefore for large $n$, $\|A_n - B_n \| < \| A^\dagger \|^{-1} \leq \|A_n^\dagger\|^{-1}$, so  we can apply Lemma \ref{wedin lema} to obtain
\begin{equation}\label{boundn}
\|B_n^\dagger\| \leq \frac{\|A_n^\dagger\|}{1-\|A_n^\dagger\|\|A_n-B_n\|} \leq  \frac{\|A^\dagger\|}{1-\|A^\dagger\| \|A_n-B_n\|}.
\end{equation}
We claim that $B_n^\dagger$ converges strongly to $B^\dagger$. This follows by using the formula in Eq. \eqref{stewart identity}, which implies that for $f \in \cH$ one has
\begin{align*}
\|(B_n^\dagger - B^\dagger)f\| & \leq \|B_n^\dagger\| \|(B_n - B) B^\dagger f\| + \|(B_n^*B_n)^\dagger\| \| (B_n^* - B^*) (I-BB^\dagger) f\| + \\
& + \|I- B_n^\dagger B_n\| \|  (B_n^* - B^*) (BB^*)^\dagger f\|.
\end{align*}
Here note that $\|(B_n^* B_n)^\dagger\|=\|B_n^\dagger (B_n^\dagger)^*\|= \|B_n^\dagger \|^2 \leq \|B^\dagger \|^2$, for all $n$, by a similar argument as before with $A_n$ and $A$. Also observe that $\|I- B_n^\dagger  B_n\| =1$ and $B_n^*=F_nB^*$ converges strongly to $B^*$ since $F_n \nearrow P_{N(B)^\perp}=P_{R(B^*)}$. This proves our claim. 

Consider $\epsilon >0$, and take a vector $f \in \cH$, $\|f\|=1$, such that $\|B^\dagger\| \leq \|B^\dagger f\| + \epsilon$. Using that $B_n^\dagger$ converges strongly to $B^\dagger$ we have $\|B^\dagger f\| \leq \|B_n^\dagger f\| + \epsilon \leq \|B_n^\dagger \| + \epsilon$ for all $n$ large enough. This gives
\begin{align*}
\|B^\dagger\|  \leq \|B_n ^\dagger \| + 2\epsilon \leq    \frac{\|A^\dagger\|}{1-\|A^\dagger\| \|A_n-B_n\|} + 2\epsilon.
\end{align*}
Letting $n \to \infty$ and noting that  $\epsilon>0$ is arbitrary, we find the desired estimate. 
\end{proof}

Now we can give our main result on the continuity of the Moore-Penrose inverse.

\begin{teo}\label{pseudo conv}
Let $\fS$ be a symmetrically-normed ideal.
Let $\{B_n\}_{n \geq 1}$ be a sequence in $\cC \cR$ such that $B_n- B \in \fS$ and  $\| B_n - B\|_\fS \to 0$, for some $B \in \cC \cR $. The following conditions are equivalent:
\begin{enumerate}
\item[i)] $[P_{N(B_n)} : P_{N(B)}]=0$ for all sufficiently large $n$; 
\item[ii)] $\sup_n \|B_n^\dagger\| < \infty$;
\item[iii)] $\|  B_n^\dagger - B^\dagger\|_\fS \to 0$;
\item[iv)] $\| P_{N(B_n)} -   P_{N(B)} \|_\fS < 1$ for all sufficiently large $n$;
\item[v)]  $\| P_{N(B_n)} -   P_{N(B)} \| < 1$ for all sufficiently large $n$;
\item[vi)] $N(B_n)^\perp \cap N(B)=\{ 0 \}$ for all sufficiently large $n$. 
\end{enumerate}
\end{teo}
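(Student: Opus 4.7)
The plan is to establish the cyclic chain of implications
(i) $\Rightarrow$ (ii) $\Rightarrow$ (iii) $\Rightarrow$ (iv) $\Rightarrow$ (v) $\Rightarrow$ (vi) $\Rightarrow$ (i).
Throughout I exploit that $\|\,\cdot\,\| \leq \|\,\cdot\,\|_\fS$, so that $\|B_n - B\| \to 0$ in operator norm as well, and Lemma~\ref{ab y moore penrose}, which guarantees that the associated differences of Moore-Penrose inverses and orthogonal projections lie in $\fS$ (and are in particular compact).

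For (i) $\Rightarrow$ (ii) I would apply Proposition~\ref{indexbound} with the roles of $A$ and $B$ played by $B$ and $B_n$ respectively: antisymmetry of the essential codimension gives $[P_{N(B)}:P_{N(B_n)}] = -[P_{N(B_n)}:P_{N(B)}] = 0$, while $\|B - B_n\| < \|B^\dagger\|^{-1}/2$ for large $n$, so the proposition yields a uniform bound $\|B_n^\dagger\| \leq 2\|B^\dagger\|$. For (ii) $\Rightarrow$ (iii) I would substitute $A = B_n$ and $B = B$ into Wedin's identity \eqref{stewart identity} and take the $\fS$-norm term-by-term. Each of the three terms contains exactly one factor of $B_n - B$ or $B_n^* - B^*$ whose $\fS$-norm tends to zero (using $\ast$-invariance of the symmetric norm), while the remaining factors are uniformly bounded in operator norm: $\|B_n^\dagger\|$ by (ii), $\|(B_n^*B_n)^\dagger\| = \|B_n^\dagger\|^2$ likewise, $\|I - B_n^\dagger B_n\| \leq 1$, and the factors depending only on $B$ are constants.

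For (iii) $\Rightarrow$ (iv) I use the algebraic identity
$$P_{N(B_n)} - P_{N(B)} \;=\; B^\dagger B - B_n^\dagger B_n \;=\; (B^\dagger - B_n^\dagger)B + B_n^\dagger(B - B_n)$$
and observe that (iii) forces $\|B_n^\dagger\|$ to be bounded (indeed it converges to $\|B^\dagger\|$ in operator norm), so the $\fS$-norm of the right-hand side tends to zero. Then (iv) $\Rightarrow$ (v) is immediate from $\|\,\cdot\,\| \leq \|\,\cdot\,\|_\fS$, and (v) $\Rightarrow$ (vi) is the elementary remark that any unit vector $f \in N(B_n)^\perp \cap N(B)$ satisfies $(P_{N(B_n)} - P_{N(B)})f = -f$, forcing $\|P_{N(B_n)} - P_{N(B)}\| \geq 1$.

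The most delicate step is (vi) $\Rightarrow$ (i), which I expect to be the main obstacle. I would start from
$$[P_{N(B_n)}:P_{N(B)}] \;=\; \dim\bigl(N(B)^\perp \cap N(B_n)\bigr) \;-\; \dim\bigl(N(B) \cap N(B_n)^\perp\bigr),$$
the pair being Fredholm thanks to the compactness of $P_{N(B_n)} - P_{N(B)}$. Hypothesis (vi) directly kills the second dimension; for the first, any unit vector $f \in N(B)^\perp \cap N(B_n)$ gives $B_n f = 0$ and hence
$$\gamma(B) \;\leq\; \|Bf\| \;=\; \|(B - B_n)f\| \;\leq\; \|B - B_n\|_\fS,$$
which contradicts $\|B_n - B\|_\fS \to 0$ once the right-hand side falls below $\gamma(B) > 0$. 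The subtle point is exactly this asymmetry: (vi) controls only one of the two intersections appearing in the index formula, and the vanishing of the other is not an extra assumption but a free consequence of $B_n \to B$ combined with $\gamma(B) > 0$ (because $B\in\cC\cR$). This is what makes the cycle close.
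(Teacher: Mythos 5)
Your proposal is correct and follows essentially the same route as the paper: Proposition~\ref{indexbound} (via antisymmetry of the essential codimension) for (i)$\Rightarrow$(ii), term-by-term estimation of Wedin's identity \eqref{stewart identity} for (ii)$\Rightarrow$(iii), the algebraic decomposition of $P_{N(B_n)}-P_{N(B)}$ for (iii)$\Rightarrow$(iv), and the $\gamma(B)>0$ contradiction argument to close the cycle at (vi)$\Rightarrow$(i). The only differences are cosmetic (an equivalent splitting in (iii)$\Rightarrow$(iv) and an explicit constant $2\|B^\dagger\|$ in (i)$\Rightarrow$(ii)).
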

\begin{proof} 
$i) \rightarrow ii)$ First, notice that Lemma \ref{ab y moore penrose} implies that $P_{N(B)}-P_{N(B_n)}\in\fS\subset \cK$, so that the essential codimension in the statement above is well defined. Suppose that   $[P_{N(B_n)} : P_{N(B)}]=0$ for large $n$. Since $\|B_n - B\| \leq \| B_n - B\|_\fS \to 0$, we derive from Proposition \ref{indexbound} that
$\|B_n^\dagger\| \leq \frac{\|B^\dagger\|}{1-\|B^\dagger\|\|B-B_n\|}$ for sufficiently large $n$. 
Hence $\sup_n \|B_n^\dagger \| < \infty$. 

\medskip

\noi $ii) \to iii)$ The Moore-Penrose inverse of an operator $A \in \cC \cR$ satisfies $(A^*A)^\dagger=A^\dagger (A^*)^\dagger$ and $(A^*)^\dagger=(A^\dagger)^*$.  Using these facts in the identity in Eq. \eqref{stewart identity}, we get
\begin{align}
\|B_n^\dagger - B^\dagger \|_\fS & \leq \|B_n\| \|B_n-B\|_\fS \| B\| +  \|B_n^\dagger \|^2 \|B_n - B \|_\fS  \|I- BB^\dagger\|  \nonumber\\
&  + \|I-B_n^\dagger B_n \| \|B_n  - B \|_\fS \| B^\dagger\|^2. \label{bound proof cont}
\end{align}
  Notice that $ \|I-B_n^\dagger B_n \|=\|  P_{N(B_n)} \| =1$, and $\sup_n \| B_n \| < \infty$ because $\|B_n - B\| \leq \|B_n - B\|_\fS \to 0$. 
Thus, the assumption $\sup_n \|B_n^\dagger \| < \infty$ implies that $\|B_n^\dagger - B^\dagger \|_\fS \to 0$.

\medskip

\noi $iii) \to iv)$  Recall that $P_{N(B_n)}=I-B_n^\dagger B_n$ and $P_{N(B)}=I-B^\dagger B$. Then 
$$
\| P_{N(B_n)} - P_{N(B)}\|_\fS\leq \|(B_n^\dagger - B^\dagger) B_n  \|_\fS+\|B^\dagger (B_n-B)\|_\fS\,.
$$Thus, $\| P_{N(B_n)} - P_{N(B)}\|_\fS$ becomes arbitrarily small for sufficiently large $n$ because  $\|B_n - B \|_\fS \to 0$ so $\sup_n \|B_n\|_\fS<\infty$, $\|B_n^\dagger - B^\dagger \|_\fS \to 0$ and the norm of the ideal is submultiplicative.

\medskip

\noi $iv) \to v)$  This follows again by the estimate $\| P_{N(B_n)} - P_{N(B)}\|  \leq \| P_{N(B_n)} - P_{N(B)}\|_\fS$.

\medskip

\noi $v) \to vi)$ Straightforward. 

\medskip

\noi $vi) \to i)$ Notice that $\|B_n - B\| \leq \| B_n - B\|_\fS \to 0$, so it holds $N(B)^\perp \cap N(B_n)=\{ 0\}$ for large $n$. Indeed, if there is a unit vector $f_n \in N(B)^\perp \cap N(B_n)$ for infinitely many $n \geq 1$, we find that $0 < \gamma(B) \leq \|B f_n \| =\| (B_n - B)f_n\| \to 0$, a contradiction.
 Hence $[P_{N(B_n)} : P_{N(B)}]= 0$ for all sufficiently large $n$.
\end{proof}

\begin{rem}
 We can take the operator adjoint  and use elementary properties of the essential codimension to state other equivalent conditions. Indeed, we can replace conditions $i)$, $iv)$, $v)$ and $vi)$ by:
 $i')$ $[P_{R(B)}: P_{R(B_n)}]=0$;
$iv')$ $\| P_{R(B_n)} - P_{R(B)} \|_\fS<1$; 
$v')$ $\| P_{R(B_n)} - P_{R(B)} \|<1$; and 
$vi')$ $R(B_n)\cap R(B)^\perp=\{ 0\}$, for all sufficiently large $n$ (in each case).  
\end{rem}

\begin{rem} 
Among the equivalent conditions  of Theorem \ref{pseudo conv}, we have considered $\sup_n \|B_n\| < \infty$  in a self-contained exposition, using properties of the essential codimension  (Proposition \ref{indexbound}). We point out that the following results in the literature on the convergence of the Moore-Penrose inverse can be used to give alternative proofs.  

\medskip

\noi $i)$ In the infinite dimensional setting Izumino \cite[Lemma 2.2]{I83} proved that for operators $B,B_n \in \cC \cR$ such that $\|B-B_n\|<\|B^\dagger\|^{-1}$ and $\| BB^\dagger - B_nB_n^\dagger \|<1$, then 
$$
\| B_n^\dagger  \| \ \leq \frac{2 \| B^\dagger\| }{1 - \|B-B_n\|\|B^\dagger\|}.
$$ 
This  was adapted by Koliha  \cite[Thm. 1.5]{K01} to the context of $C^*$-algebras.

\medskip

\noi $ii)$ In a work  by Chen, Wei and Sue on the perturbation of the Moore-Penrose inverse in the operator norm, they  proved the following estimate
 (\cite[Thm. 3.2]{CWS96}):
$$
\|B_n^\dagger\| \leq \frac{\|B^\dagger \|}{1 - \frac{1}{2}(3 + \sqrt{5})\|B^\dagger\| \|B_n - B\|},
$$
whenever $R(B_n)\cap R(B)^\perp=\{ 0\}$ and $\|B_n - B\| \leq \frac{3 - \sqrt{5}}{2\| B^\dagger \|}$. 

\end{rem}

\subsection{Geometric structure of maximal continuity sets}\label{sec geom struc pert}

For $A \in \cC \cR$ we begin by considering the set of perturbations of the form
$$\cC \cR \cap (A + \fS)=\{ B \in \cC \cR : B - A \in \fS\}.$$
This set is endowed with the metric $d_\fS(B_1, B_2)=\|B_1 - B_2 \|_\fS$, for $B_1, B_2 \in \fS$. As we will see below, the essential codimension provides a decomposition of $\cC \cR \cap (A + \fS)$ in connected sets where the Moore-Penrose inverse has  nice continuity properties. It is worth mentioning that the essential codimension was used to give a parametrization of the connected components of infinite-dimensional Grassmannians  or Stiefel manifolds (see \cite{C85} and Remark \ref{on connected comp part isom}).

\begin{rem}\label{charact close range}
The following facts, which are \cite[Thm 3.5 and 3.7]{ECPM23}, will be useful. Given $A, B \in \cC \cR$,
such that $B-A \in \fS$, then
\begin{enumerate}
\item[i)] There exists $G \in \cG \ell_{\FS}$ such that  $B=GA$  if and only if $N(B)=N(A)$. 
\item[ii)] There exist $G,K \in \cG \ell_{\FS}$ such that  $B=GAK^{-1}$  if and only if $[P_{N(B)}:P_{N(A)}]=0$. 
\end{enumerate}
\end{rem}

\begin{nota}\label{nota conv inf}
For a fixed $A \in \cC \cR$, set $n_1=\dim(N(A))$, $n_2=\dim(N(A)^\perp)$, $n_3=\dim(R(A)^\perp)$ and $\mathbb{J}_A=\{ k \in \Z : -\min\{ n_1, \, n_3\} \leq k \leq n_2 \}$. When $n_2=\infty$ we mean $\mathbb{J}_A$ contains all the positive integers; meanwhile when $n_1=n_3=\infty$ we have  $\mathbb{J}_A$ contains all the negative integers.
\end{nota}

\begin{teo}\label{index decomp connected}
Let $A \in \cC \cR$ and let $\fS$ be a symmetrically-normed ideal.  Then $\cC \cR \cap (A + \fS)$ can be decomposed as the following disjoint union
$$
  \cC \cR \cap (A + \fS)=\bigcup_{k \in \mathbb{J}_A}\cC_k(A),
$$
where
\begin{align*}
\cC_k(A) & = \{  B \in \cC \cR : B-A \in \fS , \,  [P_{N(B)}:P_{N(A)}]= k  \} \\
& = \{     B \in \cC \cR  : B- A \in \fS,  \, [P_{R(B)} :  P_{R(A)}]=-k \}.
\end{align*}
The set $\mathbb J_A$ is infinite and $\cC_k(A)\neq \emptyset$, for each $k\in\mathbb J_A$.
Furthermore, the following assertions hold:
\begin{enumerate}
\item[i)] Given $B\in\cC_k(A)$, the action $(G,K) \cdot B=GBK^{-1}$, $G,K \in \cG \ell_\fS$, is well defined and transitive on $\cC_k(A)$.

\item[ii)] $\cC_k(A)$ is  connected. 
\item[iii)] For $\ell,\,k\in \mathbb J_A$ and $B\in \cC_l(A)$ there is a sequence $\{B_n\}_{n\geq 1}$ in $\cC_k(A)$ such that $\|B_n-B\|_\fS\to 0$. In particular, $d_\fS(\cC_k(A) , \cC_l(A)):=\inf \{   \|  B_1-  B_2 \|_\fS  :   B_1 \in \cC_k(A) , \, B_2 \in \cC_l(A) \}=0$, and every $\cC_k(A)$ is dense in $ \cC \cR \cap (A + \fS)$.
\item[iv)] The map $\mu:\cC_k(A) \to \cC \cR$, $\mu(B)=B^\dagger$, is locally Lipschitz.
\item[v)]  Let $\cC_k(A) \subseteq \cC \subseteq \cC \cR \cap (A + \fS)$ endowed with the metric $d_\fS$ be such that the map $\mu: \cC \to \cC \cR$, $\mu(B)=B^\dagger$ is  continuous. Then, 
$\cC=\cC_k(A)$.
\end{enumerate}
\end{teo}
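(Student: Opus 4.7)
The plan is to verify the six claims in turn, relying on the Fredholm properties of pairs of projections modulo compacts, the Banach-Lie group structure of $\cG\ell_\fS$ from Remark \ref{restricted groups}, and Wedin's identity \eqref{stewart identity} combined with Proposition \ref{indexbound}. For the decomposition I first observe that Lemma \ref{ab y moore penrose} ensures $P_{N(B)}-P_{N(A)}\in\fS\subseteq\cK$ for any $B\in\cC\cR\cap(A+\fS)$, so $[P_{N(B)}:P_{N(A)}]$ is a well-defined integer and the partition by its value makes sense. To unify the two descriptions I will establish $[P_{N(B)}:P_{N(A)}]=-[P_{R(B)}:P_{R(A)}]$, which via $[P:Q]=-[I-P:I-Q]$ reduces to $[P_{N(B)^\perp}:P_{N(A)^\perp}]=[P_{R(B)}:P_{R(A)}]$; this in turn follows from a standard Fredholm-index calculation using that $A:N(A)^\perp\to R(A)$ and $B:N(B)^\perp\to R(B)$ are bijections while $B-A\in\cK$. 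The bounds on $\mathbb J_A$ follow from $[P:Q]=\dim(N(Q)\cap R(P))-\dim(R(Q)\cap N(P))$, and the set is infinite because $\cH$ being infinite-dimensional forces $n_2=\infty$ or $\min\{n_1,n_3\}=\infty$. Nonemptiness of $\cC_k(A)$ is exhibited by explicit finite-rank surgery: for $k\geq 0$ take $B=A(I-E)$ with $E$ a rank-$k$ projection onto a subspace of $N(A)^\perp$, and dualize for $k<0$.

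For (i) the main point is the invariance $[P_{K\cdot N(B)}:P_{N(A)}]=[P_{N(B)}:P_{N(A)}]$ for $K\in\cG\ell_\fS$, which I will prove via the exponential: by Remark \ref{restricted groups}(iii), write $K=e^X$ with $X\in\fS$ and set $K_t=e^{tX}$. Applying Lemma \ref{ab y moore penrose} to $K_tP_{N(B)}\in\cC\cR$ gives $P_{K_t\cdot N(B)}-P_{N(B)}\in\fS\subseteq\cK$, and the map $t\mapsto P_{K_t\cdot N(B)}-P_{N(B)}$ is norm-continuous, so the integer-valued $[P_{K_t\cdot N(B)}:P_{N(B)}]$ is locally constant, hence identically $0$. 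The cocycle rule then yields the claim, and transitivity on $\cC_k(A)$ follows from Remark \ref{charact close range}(ii) applied to any pair of elements of $\cC_k(A)$, whose mutual essential codimension vanishes by the cocycle. For (ii), $\cG\ell_\fS$ is path-connected via $t\mapsto e^{tX}$, so $\cG\ell_\fS\times\cG\ell_\fS$ is path-connected, and $\cC_k(A)$ is the continuous image of a connected set.

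Density (iii) will be addressed by finite-rank surgery: for $B\in\cC_l(A)$ and $k\in\mathbb J_A$ I construct carefully chosen rank-$|k-l|$ modifications of $B$ producing a sequence in $\cC_k(A)$ $\fS$-close to $B$; the modifications are tailored so as to change the nullspace dimension by exactly $k-l$ while keeping the perturbation small in $\fS$-norm. The Lipschitz property (iv) then follows directly: for $B$ in an $\fS$-neighborhood of fixed $B_0\in\cC_k(A)$, the cocycle rule gives $[P_{N(B)}:P_{N(B_0)}]=0$, Proposition \ref{indexbound} yields a uniform bound on $\|B^\dagger\|$ in terms of $\|B_0^\dagger\|$, and substituting into \eqref{stewart identity} with submultiplicativity of $\|\cdot\|_\fS$ gives $\|B^\dagger-B_0^\dagger\|_\fS\leq C_{B_0}\|B-B_0\|_\fS$. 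Finally, maximality (v) follows by contradiction: if $\cC\supsetneq\cC_k(A)$ admitted a continuous Moore-Penrose map, pick $B\in\cC_l(A)\cap\cC$ with $l\neq k$ and approximate it by $B_n\in\cC_k(A)\subseteq\cC$ via (iii); continuity forces $\|B_n^\dagger-B^\dagger\|_\fS\to 0$, and Theorem \ref{pseudo conv} then implies $[P_{N(B_n)}:P_{N(B)}]=0$ eventually, so $B_n\in\cC_l(A)$, contradicting $B_n\in\cC_k(A)$.

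The hardest step I anticipate is the density construction in (iii): changing the essential codimension by a small $\fS$-norm perturbation is delicate because naively creating a new null vector for $B$ requires a perturbation of size at least $\gamma(B)$. The correct surgery must exploit the structure of $B$ near its null and range directions, choosing the perturbing operator on adaptively selected finite-dimensional subspaces so that the added or absorbed spectral mass is arbitrarily small. The invariance in (i) and the index identity used in the equivalence of the two definitions of $\cC_k(A)$ are also technical but follow standard Fredholm-theoretic patterns.
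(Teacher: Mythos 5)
Most of your outline follows the paper's proof: the non-emptiness constructions, the use of Remark \ref{charact close range} for transitivity in $i)$, the surjectivity of $\exp_{\cG\ell_\fS}$ for connectedness in $ii)$, and the arguments for $iv)$ and $v)$ are essentially the ones in the paper. Two of your routes are genuinely different but sound: you obtain $[P_{N(B)}:P_{N(A)}]=-[P_{R(B)}:P_{R(A)}]$ by a direct Fredholm-index computation (comparing, say, $A^\dagger B|_{N(B)^\perp}$ with the compact perturbation $P_{N(A)^\perp}P_{N(B)^\perp}|_{N(B)^\perp}$), whereas the paper reduces to the case $k=0$ by passing to $A\oplus 0_k$ and $B\oplus I_k$ on $\cH\oplus\C^k$; and you prove invariance of the index under the $\cG\ell_\fS\times\cG\ell_\fS$-action by a homotopy/local-constancy argument along $t\mapsto e^{tX}$, whereas the paper simply quotes the ``if and only if'' of Remark \ref{charact close range} $ii)$. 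Both alternatives are fine and, if anything, more self-contained.

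The genuine gap is in $iii)$, and you have put your finger on it yourself without resolving it. You defer the construction to an unspecified ``adaptive surgery'' that would make ``the added or absorbed spectral mass arbitrarily small''; but for a closed range operator there is no small spectral mass to absorb, since $\sigma(|B|)\cap(0,\gamma(B))=\emptyset$. Concretely, if $B\in\cC_l(A)$ and $B'\in\cC_k(A)$ with $k>l$, then $[P_{N(B')}:P_{N(B)}]=k-l>0$ forces the existence of a unit vector $f\in N(B')\cap N(B)^\perp$, and then
$$
\|B'-B\|_\fS\ \geq\ \|(B'-B)f\|\ =\ \|Bf\|\ \geq\ \gamma(B)\ >\ 0\,.
$$
So no sequence in $\cC_k(A)$ with $k>l$ can converge to $B$: the obstruction you ``anticipate'' is not a technical difficulty to be engineered around but an absolute lower bound (it is the same lower semicontinuity of rank that makes singular matrices non-dense among invertible ones). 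The only surgery available is the nullity-decreasing one that the paper itself uses, namely $B_n=B+\tfrac1n P_\cS$ with $\cS$ an $(l-k)$-dimensional subspace of $N(B)\cap N(B^{(k)})^\perp$ (which exists exactly when $k<l$, since then $[P_{N(B)}:P_{N(B^{(k)})}]=l-k>0$). Your write-up must therefore either supply this construction explicitly and restrict the claim to $k\leq l$, or accept that the unrestricted statement of $iii)$ (and hence the density of every $\cC_k(A)$) cannot be proved. Note the knock-on effect on $v)$: the contradiction argument via $iii)$ only disposes of $B\in\cC_l\cap\cC$ with $l>k$; for $l<k$ the same $\gamma(B)$ bound shows $B$ is isolated in $\cC_k(A)\cup\{B\}$, so continuity of $\mu$ there is automatic and your argument gives nothing. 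As written, your proof of $iii)$ is a placeholder for a construction that, in one of the two directions, cannot exist.
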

\begin{proof}
In the proof we write $\cC_k:=\cC_k(A)$. For $B \in \cC \cR$, $B-A \in \fS$, notice that $[P_{N(B)}: P_{N(A)}]=0$ if and only if $[P_{R(B)}:P_{R(A)}]=0$. This follows easily from Remark \ref{charact close range} by taking the operator adjoint  and elementary properties of the essential codimension. Next suppose $[P_{N(B)}: P_{N(A)}]=k \neq 0$. If $k>0$, then the  operators defined on $\cH \oplus \C^k$ by $\tilde{A}=A\oplus 0_k$ and $\tilde{B}=B \oplus I_k$ now satisfy $0=[P_{N(\tilde{B})}: P_{N(\tilde{A})}]$. Hence by the previous case, $0=[P_{R(\tilde{B})}: P_{R(\tilde{A})}]=k + [P_{R(B)}: P_{R(A)}]$. The case $k<0$ follows from the property  $[P_{N(B)}: P_{N(A)}]=-  [P_{N(A)}: P_{N(B)}]$. This proves the equivalence between the two conditions defining the sets $\cC_k$. 

In the forthcoming inequalities  we use similar conventions to those of  Notation \ref{nota conv inf} for the cases $n_1=n_3=\infty$, and $n_2=\infty$. According to the definition of the essential codimension,  it follows that   $- \min\{   n_1,   \, n_3\} \leq  [P_{N(B)}: P_{N(A)}]=- [P_{R(B)}: P_{R(A)}] \leq n_2$, for $B \in  \cC \cR \cap (A + \fS)$.  
 Therefore $\cC \cR \cap (A + \fS)$ can be expressed as the disjoint union in the statement. Moreover, note that for $- \min \{  n_1 , \, n_3 \} \leq k \leq n_2$, one can construct 
 operators $B$ such that $A-B\in\fS$ and $[P_{N(B)}: P_{N(A)}]=k$. For instance,  when $n_1 \leq n_3$, there is partial isometry $X_l$ such that $N(X_l)^\perp \subseteq N(A)$, $R(X_l) \subseteq R(A)^\perp$ and $\dim(N(X_l)^\perp)=l$, for $0 \leq l \leq n_1$. Set $B=A + X_l$, which gives $[P_{N(B)}:P_{N(A)}]=-l$. Now for $0<l \leq n_2$, take a subspace $\cS \subseteq N(A)^\perp$ such that $\dim(N(A)^\perp \ominus \cS)=l$. For $B=AP_\cS$,
it holds that $[P_{N(B)}:P_{N(A)}]=l$. The case $n_1 > n_3$ can be treated similarly.  Hence $\cC_k \neq \emptyset$ for $k \in \mathbb{J}_A$. Furthermore, $\mathbb J_A$ is an infinite set 
 because the underlying Hilbert space $\cH$ is infinite dimensional.

\medskip

\noi $i)$ Given $B\in\cC_k$ and $G,K\in\cG\ell_\fS$ then $B_1=(G,K)\cdot B=GBK^{-1}$ is such that $B_1-B\in\fS$, $P_{N(B)}-P_{N(B_1)}\in\fS$ and $[P_{N(B)}:P_{N(B_1)}]=0$ (see Remark \ref{charact close range}). Hence, $A-B_1=A-B+B-B_1\in\fS$ and 
by the properties of the essential codimension (see Section \ref{sec prelis}) we have that 
$[P_{N(B_1)}:P_{N(A)}]=[P_{N(B_1)}:P_{N(B)}]+[P_{N(B)}:P_{N(A)}]=k+0$, so $B_1\in\cC_k$. On the other hand, given $B,\,B_2\in\cC_k$ then $B-B_2=(B-A)+(A-B_2)\in\fS$ and $[P_{N(B)}:P_{N(B_2)}]=[P_{N(B)}:P_{N(A)}]+[P_{N(A)}:P_{N(B_2)}]=k+(-k)=0$. Again, by Remark \ref{charact close range} we get that there exists $G,\,K\in \cG\ell_\fS$ such that $(G,K)\cdot B=B_2$ and the action is transitive on $\cC_k$.

\medskip

\noi $ii)$ From the previous item, every $B \in \cC_k$ is written as $B=GB^{(k)} K^{-1}$, for a fixed $B^{(k)} \in \cC_k$ and $G,K \in \cG \ell_\fS$. According to Remark \ref{restricted groups} $iii)$ there exist $X,Y \in \fS$ such that $e^X=G$ and $e^Y=K$.
Then $\gamma:[0,1] \to \cC_k$ defined by   $\gamma(t)=e^{tX} B^{(k)}  e^{-tY}$ is continuous and $\gamma(0)=B^{(k)}$ and $\gamma(1)=B$.

\medskip

\noi $iii)$ Take $B_1 \in \cC_k$ and  $B_2 \in \cC_l$, and suppose $k > l$. Therefore, $[P_{N(B_2)}: P_{N(B_1)}]=k-l$, so that $\dim(N(B_2) \cap N(B_1)^\perp)\geq k-l$. Let $\cS \subseteq N(B_2)\cap N(B_1)^\perp $ be a subspace of dimension $k-l$, and for $\epsilon >0$ let $B_1^\epsilon= B_2+\frac{\epsilon}{k-l} P_\cS$. One can verify that 
$B_1-B_1^\epsilon\in\fS$ and 
$[P_{N(B_1^\epsilon)}:P_{N(B_1)}]=0$; hence $B_1^\epsilon\in\cC_k$ for $\epsilon>0$ (see Remark \ref{charact close range} and item $i)$ above). Since the singular values satisfy $s_j(B_1^\epsilon - B_2) = \frac{\epsilon}{k-l}$ for $j=1, \ldots, k-l$, and 
$s_j(B_1^\epsilon - B_2)=0$ for $j>k-l$, it follows that $\| B_1 ^\epsilon - B_2 \|_\fS \leq \epsilon$. 

\medskip

\noi $iv)$ Since $\cC_k(A)=\cC_0(B^{(k)})$ we assume, without loss of generality, that $k=0$ and prove that $\mu$ is locally Lipschitz in a neighborhood of $A$. Indeed, take the open ball $\cV:=\{ B \in \cC_0 : \|B- A\|_\fS < \frac{1}{2\|A^\dagger\|} \}$. For $B_1 , B_2 \in \cV$, we consider the same estimate as in  \eqref{bound proof cont}, i.e.
\begin{align}
\|B_2^\dagger - B_1^\dagger \|_\fS & \leq \|B_2\| \|B_2-B_1\|_\fS \| B_1\| +  \|B_2^\dagger \|^2 \|B_2 - B_1 \|_\fS  \|I- B_1B_1^\dagger\|  \nonumber\\
&  + \|I-B_2^\dagger B_2 \| \|B_2  - B_1 \|_\fS \| B_1^\dagger\|^2. \nonumber
\end{align}
Since $B_i \in \cV$, $i=1,2$, we have $\| B_i -A \| \leq \frac{1}{2\|A^\dagger\|}$,  so that $\|B_i\| \leq  \|A\| +  \frac{1}{2\|A^\dagger\|}$. By Proposition \ref{indexbound} it follows that $\|B_i ^\dagger\| \leq \frac{\|A^\dagger\|}{1 - \|A^\dagger\| \|A- B_i\|} \leq 2\| A^\dagger\|$. Therefore, we get
$$
\| B_2 ^\dagger - B_1 ^\dagger \|_\fS \leq   \left[ \left(\|A\| +  \frac{1}{2\|A^\dagger\|}  \right)^2 + 8 \|A^\dagger \|^2  \right] \| B_2 - B_1 \|_\fS .
$$

\medskip

\noi $v)$ If $\cC \neq \cC_k$, then there exists some $l\neq k$ with $B \in \cC_l \cap \cC$. By item  $iii)$ there is a  sequence $\{ B_n\}_{n \geq 1}$ in $\cC_k$ such that $\|B_n - B\|_\fS \to 0$. But this contradicts the continuity of $\mu:\cC \to \cC\cR$ by Theorem \ref{pseudo conv}.
\end{proof}

\begin{rem}
Regarding the Lipschitz condition in the context of infinite-dimensional Hilbert spaces, we recall the following result. Put $\cR_k=\{  B \in \cB(\cH) : \gamma(B)\geq \frac{1}{k} \}$. Then $\| B_1 ^\dagger - B_2 ^\dagger \| \leq 3k^2 \|B_1 - B_2 \|$, for all $B_1 , B_2 \in \cR_k$ (\cite[Lemma 3.10]{CMM09}).
\end{rem}

The following result shows that in case a sequence in $\cC_k(A)$ approaches $A$ in the norm of $\fS$, then it can be modified in a controlled way so that the Moore-Penrose of the modified sequence converges to $A^\dagger$ in the norm of $\fS$.

\begin{cor}
Let $\{B_n\}_{n\geq 1}$ be a sequence in $\cC_k(A)$ for some $0\neq k\in\mathbb J_A$ such that $\|A-B_n\|_\fS\to 0$. 
Then, there exists a sequence $\{C_n\}_{n\geq 1}$ such that $\|C_n\|_\fS\to 0$, $\text{rank}(C_n)=|k|$ for $n\geq 1$, and 
$\|A^\dagger - (B_n+C_n)^\dagger \|_\fS\to 0$.
\end{cor}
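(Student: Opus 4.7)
The plan is to treat the cases $k > 0$ and $k < 0$ separately, in each case exploiting the essential codimension hypothesis to pin down finite-dimensional subspaces that encode the discrepancy between $B_n$ and $A$, and then to construct a rank-$|k|$ perturbation $C_n$ that cancels this discrepancy so that $B_n + C_n \in \cC_0(A)$. Once this is done, Theorem \ref{pseudo conv} applied to the sequence $\{B_n + C_n\}_{n \geq 1}$, whose difference $(B_n + C_n) - A = (B_n - A) + C_n$ lies in $\fS$ with $\|(B_n + C_n) - A\|_\fS \leq \|B_n - A\|_\fS + \|C_n\|_\fS \to 0$, will immediately deliver $\|A^\dagger - (B_n + C_n)^\dagger\|_\fS \to 0$.

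For $k > 0$, the identities $[P_{N(B_n)}: P_{N(A)}] = k$ and $[P_{R(B_n)}: P_{R(A)}] = -k$ force $\dim(N(A)^\perp \cap N(B_n)) \geq k$ and $\dim(R(A) \cap R(B_n)^\perp) \geq k$. I pick $k$-dimensional subspaces $\cS_n \subseteq N(A)^\perp \cap N(B_n)$ and $\cT_n \subseteq R(A) \cap R(B_n)^\perp$, fix orthonormal bases $\{e_i^n\}_{i=1}^k$ of $\cS_n$ and $\{f_i^n\}_{i=1}^k$ of $\cT_n$, and set $C_n g := \eps_n \sum_{i=1}^k \pint{g, e_i^n} f_i^n$ for a positive scalar $\eps_n \to 0$. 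Since $\cS_n \subseteq N(B_n)$ and $R(C_n) = \cT_n \subseteq R(B_n)^\perp$, the ranges of $B_n$ and $C_n$ are mutually orthogonal, which yields $N(B_n + C_n) = N(B_n) \ominus \cS_n$ and $R(B_n + C_n) = R(B_n) \oplus \cT_n$ (closed). Additivity of the essential codimension then gives $[P_{N(B_n + C_n)}: P_{N(A)}] = -k + k = 0$, i.e.\ $B_n + C_n \in \cC_0(A)$; and clearly $\text{rank}(C_n) = k$ with $\|C_n\|_\fS \leq \eps_n \cdot \gamma_{k,\fS} \to 0$, where $\gamma_{k,\fS}$ is the $\fS$-norm of a rank-$k$ partial isometry.

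For $k < 0$ only one subspace is needed: $\dim(N(A) \cap N(B_n)^\perp) \geq |k|$, so I pick $\cS_n \subseteq N(A) \cap N(B_n)^\perp$ of dimension $|k|$ and set $C_n := -B_n P_{\cS_n}$. Since $\cS_n \subseteq N(B_n)^\perp$, $B_n$ is injective on $\cS_n$, so $\text{rank}(C_n) = |k|$; and since $\cS_n \subseteq N(A)$, we have $A P_{\cS_n} = 0$, which gives $\|C_n\|_\fS = \|(B_n - A) P_{\cS_n}\|_\fS \leq \|B_n - A\|_\fS \to 0$ \emph{for free}. Then $B_n + C_n = B_n P_{\cS_n^\perp}$, with $N(B_n + C_n) = \cS_n \oplus N(B_n)$ and closed range $B_n(N(B_n)^\perp \ominus \cS_n)$ (the image of a closed subspace under the isomorphism $B_n|_{N(B_n)^\perp}$); additivity once more yields $[P_{N(B_n + C_n)}: P_{N(A)}] = |k| + k = 0$. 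The main point requiring care, I expect, is the asymmetric nature of the two constructions --- additive for $k > 0$ and subtractive for $k < 0$, with the smallness of $C_n$ arising in completely different ways --- while the essential codimension identities themselves reduce to routine applications of additivity once $\cS_n$ and $\cT_n$ are chosen to sit inside the correct intersections.
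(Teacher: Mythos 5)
Your proof is correct and follows the same overall strategy as the paper: correct $B_n$ by a rank-$|k|$ operator so that $B_n+C_n$ lands in $\cC_0(A)$, then invoke Theorem \ref{pseudo conv}. For $k<0$ your construction ($C_n=-B_nP_{\cS_n}$ with $\cS_n\subseteq N(A)\cap N(B_n)^\perp$) is identical to the paper's. For $k>0$ you diverge: the paper takes $C_n=AP_n$ with $P_n$ the projection onto a $k$-dimensional $\cS_n\subseteq N(B_n)\cap N(A)^\perp$, so that smallness again comes for free from $AP_n=(A-B_n)P_n$; you instead build $C_n$ as $\eps_n$ times a partial isometry from $\cS_n\subseteq N(A)^\perp\cap N(B_n)$ onto $\cT_n\subseteq R(A)\cap R(B_n)^\perp$, paying for smallness with an arbitrary $\eps_n\to 0$. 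Your variant has the advantage that the orthogonality $R(C_n)\perp R(B_n)$ makes the identities $N(B_n+C_n)=N(B_n)\ominus\cS_n$ and $R(B_n+C_n)=R(B_n)\oplus\cT_n$ (hence closedness and the essential codimension count) completely transparent for every $n$, whereas the paper's choice keeps the correction intrinsically tied to the data $A-B_n$; both are perfectly adequate for the statement, which does not require $\|C_n\|_\fS\leq\|A-B_n\|_\fS$.
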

\begin{proof}
Assume that $k<0$, so that $\dim(N(A)\cap N(B_n)^\perp)\geq -k$, and let 
$\cS_n\subset N(A)\cap N(B_n)^\perp$ be such that $\dim\cS_n=-k$. If we let $P_n$ denote the orthogonal projection
onto $\cS_n$ then, by construction, 
$\| B_nP_n\|_\fS=\|(A-B_n)P_n\|_\fS\leq \|A-B_n\|_\fS\to 0$.
 Moreover, we also get that $[P_{N(B_n-B_nP_n)}:P_{N(A)}]=0$. Thus, if we let $C_n=-B_nP_n$ then
$\|A-(B_n+C_n)\|_\fS\to 0$ and $[P_{N(B_n+C_n)}:P_{N(A)}]=0$.
By the continuity of the Moore-Penrose inverse in $\cC_0(A)$, we get that $\|A^\dagger - (B_n+C_n)^\dagger \|_\fS\to 0$.

Assume now that $k>0$, so that $\dim(N(B_n)\cap N(A)^\perp)\geq k$, and let 
$\cS_n\subset N(B_n)\cap N(A)^\perp$ be such that $\dim\cS_n=k$. If we let $P_n$ denote the orthogonal projection
onto $\cS_n$ then, by construction, 
$\| AP_n\|_\fS=\|(A-B_n)P_n\|_\fS\leq \|A-B_n\|_\fS\to 0$.
 Moreover, we also get that $[P_{N(B_n+AP_n)}:P_{N(A)}]=0$. Thus, if we let $C_n=AP_n$ then
$\|A-(B_n+C_n)\|_\fS\to 0$ and $[P_{N(B_n+C_n)}:P_{N(A)}]=0$.
Again, by the continuity of the Moore-Penrose inverse in $\cC_0(A)$, we obtain that $\|A^\dagger - (B_n+C_n)^\dagger \|_\fS\to 0$.
\end{proof}

Notice that item $i)$ of Theorem \ref{index decomp connected} says that $\cC_k(A)$ is an orbit by the action of the (product) restricted group $\cG\ell_\fS\times \cG\ell_\fS$, for every $k\in\mathbb J_A$.  In the sequel, we may fix any  $B^{(k)} \in \cC_k(A)$ and write
\begin{align*}
\cC_k (A)& = \{  B \in \cC \cR : B- A \in \fS, \, [P_{N(B)} : P_{N(A)}]=k\} \\
& = \{ GB^{(k)}K^{-1}  : G,K \in \cG \ell_\fS \}. 
\end{align*}
In particular, we can take $B^{(0)}=A$. From item $iv)$, the operators in  the sets $\cC_k(A)$ can be considered as 
 perturbations of the fixed operators $B^{(k)}$. Moreover,  item $v)$ may be interpreted as saying that the set $\cC_0(A)$ is a  maximal subset of $\cC\cR\cap (A+\fS)$ containing $A$ in which the Moore-Penrose inverse is continuous. 

To further study  the structure of $\cC_k(A)$, we now  introduce the maps 
$$
\pi_k : \cG \ell_{\FS} \times \cG \ell_{\FS} \to   \cC_k(A), \, \, \, \, \pi_k(G,K)=GB^{(k)}K^{-1}.
$$
Recall that we consider $\cC_k(A)$ endowed with the topology induced by metric $d_\fS(B,C)=\|B-C \|_\fS$.

\begin{lem}\label{local cross sections}
The map $\pi_k $ admits continuous local cross sections.
\end{lem}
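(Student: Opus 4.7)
By the transitivity of the action (Theorem~\ref{index decomp connected}\,i)), it suffices to construct a continuous local cross section in a neighborhood of the distinguished point $B^{(k)}$ and then transport it to any other point of $\cC_k(A)$ by composition with the action. For $B\in \cC_k(A)$ close to $B^{(k)}$ in $d_\fS$, my plan is to exhibit explicit operators $K(B),\,G(B)\in\cG\ell_\fS$ depending continuously on $B$ and satisfying $G(B)\,B^{(k)}\,K(B)^{-1}=B$. Writing $P_B=B^\dagger B=P_{N(B)^\perp}$ and $P_0=P_{N(B^{(k)})^\perp}$, Lemma~\ref{ab y moore penrose} gives $P_B-P_0\in\fS$, while the local Lipschitz continuity of the Moore-Penrose inverse on $\cC_k(A)$ (Theorem~\ref{index decomp connected}\,iv)) yields $\|P_B-P_0\|_\fS\to 0$ as $B\to B^{(k)}$, hence $\|P_B-P_0\|<1$ on a sufficiently small $d_\fS$-ball. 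I would then take Kato's intertwining formula
\[
K(B)=P_B P_0 + (I-P_B)(I-P_0).
\]
A short calculation gives $K(B)-I=(P_B-P_0)(2P_0-I)\in\fS$ and $K(B)^*K(B)=I-(P_B-P_0)^2$, positive and invertible, so $K(B)\in\cG\ell_\fS$. The relation $K(B)P_0=P_BK(B)$ then forces $K(B)\,N(B^{(k)})=N(B)$, so $\tilde B:=B^{(k)}K(B)^{-1}$ satisfies $N(\tilde B)=N(B)$ and, by Theorem~\ref{index decomp connected}\,i), lies in $\cC_k(A)$.

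Next, since $\tilde B$ and $B$ share their nullspace and both lie in $\cC_k(A)$, I would set
\[
G(B)=I+(B-\tilde B)\,\tilde B^\dagger.
\]
Using $\tilde B^\dagger\tilde B=P_{N(B)^\perp}$ together with $B\,P_{N(B)^\perp}=B$ and $\tilde B\,P_{N(B)^\perp}=\tilde B$, one obtains immediately $G(B)\,\tilde B=B$, i.e.\ $\pi_k(G(B),K(B))=B$. The membership $G(B)-I=(B-\tilde B)\,\tilde B^\dagger\in\fS$ follows from $B-B^{(k)}\in\fS$, $B^{(k)}-\tilde B=B^{(k)}(I-K(B)^{-1})\in\fS$ and the ideal property. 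Invertibility of $G(B)$ is secured by taking $B$ close enough to $B^{(k)}$ so that $\|(B-\tilde B)\tilde B^\dagger\|<1$, using again Theorem~\ref{index decomp connected}\,iv) to keep $\tilde B^\dagger$ $\fS$-bounded as $\tilde B\to B^{(k)}$. The assignment $s(B):=(G(B),K(B))$ is then a continuous local section of $\pi_k$ at $B^{(k)}$; for any other $B_1\in\cC_k(A)$ one picks $(G_1,K_1)\in\cG\ell_\fS\times\cG\ell_\fS$ with $\pi_k(G_1,K_1)=B_1$ and defines $s_1(B):=(G_1\,G(G_1^{-1}BK_1),\,K_1\,K(G_1^{-1}BK_1))$, which is a continuous local section at $B_1$ because $B\mapsto G_1^{-1}BK_1$ is a $d_\fS$-homeomorphism of $\cC_k(A)$ sending $B_1$ to $B^{(k)}$.

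The main obstacle is orchestrating $\fS$-continuity for every ingredient simultaneously. The subtlety is that mere operator-norm closeness of $B$ to $B^{(k)}$ does not suffice: one truly needs $d_\fS$-closeness in order to invoke the $\fS$-continuity of the Moore-Penrose inverse on $\cC_k(A)$, which is what keeps $B\mapsto P_B$ and $B\mapsto \tilde B^\dagger$ continuous and uniformly $\fS$-bounded, and at the same time to retain $\|P_B-P_0\|<1$ so that Kato's formula actually yields an invertible element of $\cG\ell_\fS$. Once these two ingredients are coordinated on a small enough $d_\fS$-neighborhood, the cross section is produced by the explicit formulas above.
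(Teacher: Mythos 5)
Your construction is correct and follows essentially the same route as the paper: reduce to the distinguished point by transitivity, build the right factor as the Kato intertwiner $P_{N(B)^\perp}P_{N(B^{(k)})^\perp}+P_{N(B)}P_{N(B^{(k)})}$ (which is exactly the paper's second coordinate $\sigma_2$), and derive continuity and membership in $\cG\ell_\fS$ from Theorem \ref{pseudo conv} and Lemma \ref{ab y moore penrose}. The only cosmetic difference is the left factor --- the paper takes $BA^\dagger+(I-P_{R(B)})(I-P_{R(A)})$ directly from \cite{ACM05}, whereas you first correct the nullspace and then set $G(B)=I+(B-\tilde B)\tilde B^\dagger$ --- but both choices are verified by the same ingredients, so the proofs are essentially identical.
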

\begin{proof}
We may assume that $k=0$. We show that $\pi_0$ has continuous local cross sections at $A$. The arguments can be adapted to other points $B \in \cC_0(A)$, because $\cC_0(A)$ is an orbit by the continuous action $\pi_0$ of the product group $\cG\ell_\fS\times\cG\ell_\fS$. Thus, we need to find an open neighborhood $\cW$ of $A \in \cC_0(A)$ and a continuous map $\sigma: \cW \to \cG \ell_{\FS} \times \cG \ell_{\FS}$ such that $(\pi_A \circ \sigma) (B)= B$, for all $B \in \cW$. Observe that as an immediate consequence of Theorem \ref{pseudo conv}, there exists an open set $\cW\subset \cC_0(A)$, where the map $\sigma: \cW \to \cB(\cH) \times \cB(\cH)$ defined by
$$
\sigma(B)=(BA^\dagger + (I-P_{R(B)})(I-P_{R(A)})\, ,\,   P_{N(B)} P_{N(A)} +  (I-P_{N(B)})(I-P_{N(A)}) )
$$
is continuous and it takes values on $\cG \ell(\cH) \times \cG \ell(\cH)$. We remark that this section $\sigma$ was first defined in \cite[Prop. 1.1]{ACM05}  in the context of generalized inverses in  $C^*$-algebras (see also \cite[Prop. 5.7]{CMM09}). The fact that this is indeed a section follows analogously in our setting.  
Finally, we may rewrite the first coordinate  as
$
\sigma_1(B)=B(A^\dagger - B^\dagger) + P_{R(A)} (P_{R(B)}- P_{R(A)}) + I;
$
while the second coordinate may be written as $\sigma_2(B)=  P_{N(B)} (P_{N(A)}- P_{N(B)}) + (P_{N(B)}-P_{N(A)})P_{N(A)}  + I$. 
From these expressions together with Lemma \ref{ab y moore penrose}  we find that $\sigma_i(B) \in \cG\ell_{\FS}$, $i=1,2$. 
\end{proof}

To establish the homogeneous space structure of $\cC_k$ induced by the action of $\cG\ell_\fS\times \cG\ell_\fS$ we first consider 
 the isotropy subgroup of the action at $A\in\cC_0(A)$. 

\begin{lem}\label{lie subg}
Let $A \in \cC \cR$ and let $\fS$ be a symmetrically-normed ideal.  Then 
$$
\cG_A=\{    (G,K) \in \cG\ell_{\FS} \times  \cG\ell_{\FS} : GA=AK   \}
$$
is a Banach-Lie subgroup of $\cG\ell_{\FS} \times  \cG\ell_{\FS}$. 
\end{lem}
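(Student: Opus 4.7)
The plan is to apply the Banach-space implicit function theorem to
$$
\Phi: \cG \ell_{\fS} \times \cG \ell_{\fS} \to \fS , \qquad \Phi(G, K) = GA - AK,
$$
which is well defined since $GA - AK = (G - I)A - A(K - I) \in \fS$ by the ideal property, and is real analytic being the restriction of the affine map $(X, Y) \mapsto XA - AY$ on $\fS \oplus \fS$. By construction $\cG_A = \Phi^{-1}(0)$, and the differential at the identity is $d\Phi_{(I, I)}(X, Y) = XA - AY$, whose kernel is the candidate Lie algebra
$$
\mathfrak{g}_A = \{ (X, Y) \in \fS \oplus \fS : XA = AY \}.
$$

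Setting $P = P_{R(A)}$ and $Q = P_{N(A)^\perp}$, so that $PA = A = AQ$, $AA^\dagger = P$ and $A^\dagger A = Q$, I would use the four-block decomposition $Z = PZQ + PZ(I-Q) + (I-P)ZQ + (I-P)Z(I-Q)$ to identify the image of $d\Phi_{(I, I)}$ with the closed subspace
$$
\cF := \{ Z \in \fS : (I - P)Z(I - Q) = 0 \},
$$
which is topologically complemented in $\fS$ by $(I - P)\fS(I - Q)$ through the bounded idempotent $Z \mapsto (I - P)Z(I - Q)$. The identities $(I - P)A = 0$ and $A(I - Q) = 0$ moreover imply that $\Phi$ itself takes values in $\cF$, so $\Phi$ may be regarded as a real analytic map into $\cF$. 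As an explicit bounded right inverse of $d\Phi_{(I, I)}: \fS \oplus \fS \to \cF$ I would take
$$
\sigma: \cF \to \fS \oplus \fS, \qquad \sigma(Z) = ((I - P) Z A^\dagger, \, - A^\dagger Z),
$$
for which $d\Phi_{(I, I)}(\sigma(Z)) = (I - P)Z Q + PZ = Z$ for every $Z \in \cF$. Thus $\mathfrak{g}_A$ is closed and $\fS \oplus \fS = \mathfrak{g}_A \oplus \sigma(\cF)$ is a topological direct sum.

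With these ingredients the implicit function theorem provides a submanifold chart for $\cG_A$ in a neighborhood of $(I, I)$, with tangent space $\mathfrak{g}_A$. Since $\cG_A$ is a subgroup, for any $(G_0, K_0) \in \cG_A$ the left translation $L_{(G_0, K_0)}(G, K) = (G_0 G, K_0 K)$ is a real bianalytic diffeomorphism of $\cG \ell_{\fS} \times \cG \ell_{\fS}$ that preserves $\cG_A$ and maps $(I, I)$ to $(G_0, K_0)$; transporting the chart at the identity yields a submanifold chart at $(G_0, K_0)$, and the group operations of $\cG \ell_{\fS} \times \cG \ell_{\fS}$ restrict to real analytic operations on $\cG_A$ because they are already real analytic on the ambient group. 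Hence $\cG_A$ is a Banach-Lie subgroup. The main technical step is the block-decomposition analysis used to identify $\cF$ both as the image of $d\Phi_{(I, I)}$ and as the effective codomain of $\Phi$.
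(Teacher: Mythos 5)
Your proof is correct, and it reaches the conclusion by a genuinely different route from the paper. The paper invokes the criterion of \cite[Prop. 8.12]{Up85}: it shows that $\mathfrak{g}_A$ is closed and complemented by writing down an explicit block-matrix complement $\mathfrak{m}$ relative to the decompositions induced by $P_{R(A)}$ and $P_{N(A)^\perp}$, and then separately verifies the exponential condition (that $\exp_\cG(\cT)$ is a neighborhood of $(I,I)$ in $\cG_A$ for every neighborhood $\cT$ of $0$ in $\mathfrak{g}_A$) via the identity $(G-I)^nA=A(K-I)^n$ and the logarithm series. You instead exhibit $\cG_A$ as the zero set of $\Phi(G,K)=GA-AK$, which in the global chart $(X,Y)\mapsto(I+X,I+Y)$ is linear with constant differential, so that $\cG_A$ is simply the intersection of the open set $\cG\ell_\fS\times\cG\ell_\fS$ with the translate $(I,I)+\mathfrak{g}_A$ of a closed complemented subspace. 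Your bounded right inverse $\sigma(Z)=((I-P)ZA^\dagger,\,-A^\dagger Z)$ does double duty: it proves surjectivity of $d\Phi_{(I,I)}$ onto $\cF$ and complementation of its kernel $\mathfrak{g}_A$ (with complement $\sigma(\cF)$ in place of the paper's explicit $\mathfrak{m}$), and the verification $d\Phi_{(I,I)}(\sigma(Z))=(I-P)ZQ+PZ=Z$ for $Z\in\cF$ checks out. Since the paper defines a Banach-Lie subgroup precisely as a subgroup that is a submanifold, your affine-slice argument establishes exactly what is required and renders the exponential/logarithm step unnecessary: the relative topology and the adapted charts come for free from the affine structure. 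What the paper's route buys is a verification of the exact hypotheses of the criterion it cites, which then feeds into \cite[Thm. 8.19]{Up85} in the following theorem; what yours buys is a shorter, self-contained argument that fully exploits the affinity of $\Phi$.
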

\begin{proof}
We write  $\cG:=\cG\ell_{\FS} \times  \cG\ell_{\FS}$ and $\mathfrak{g}=\fS \times \fS$ for its Lie algebra. Notice that $\cG_A$ is a closed subgroup of $\cG$. Then according to \cite[Prop 8.12]{Up85} we  must show that the closed subalgebra 
\begin{align*}
(T\cG_A)_{(I,I)}=\mathfrak{g}_A & :=\{   (X,Y) \in   \mathfrak{g} : (e^{tX}, e^{tY}) \in \cG_A \text{  for all t} \in \R\} \\
& = \{  (X,Y) \in   \mathfrak{g} :  XA=AY  \} 
\end{align*}
is a closed complemented subspace of $\mathfrak{g}$, and  for every neighborhood $\cT$ of $(0,0)  \in \mathfrak{g}_A$, $\exp_{\cG}(\cT)$ is a neighborhood of $(I,I) \in \cG_A$.

We put $P=P_{R(A)}$ and $Q=P_{N(A)^\perp}$. From $XA=AY$ we see that $XP=AYA^\dagger$, and thus, $P^\perp XP=P^\perp AYA^\dagger=0$. Similarly, $Q YQ^\perp=0$ and $QYQ=A^\dagger XP A=A^\dagger P XP A$. We may represent the elements $ (X,Y) \in \mathfrak{h}$ as follows
$$
X=\begin{pmatrix}      X_{11}   &      X_{12}   \\   0    &   X_{22}    \end{pmatrix}, \, \, \, \, \,  
Y=\begin{pmatrix}     A^\dagger X_{11} A   &      0 \\   Y_{21}    &   Y_{22}    \end{pmatrix}, 
$$
where the first and second matrix representations are with respect to the decompositions $\cH=R(P)\oplus N(P)$  and $\cH=R(Q)\oplus N(Q)$, respectively. Here we identify the operator entries as $X_{11} \in P\FS P$, $X_{12} \in P \FS P^\perp$ and $X_{22} \in P^\perp \FS P^\perp$, and similarly for the operator entries corresponding to $Y$ with respect to $Q$.   From the above representations, it is now clear that $\mathfrak{g}_A$ is closed in $\mathfrak{g}$. Moreover,  a closed supplement of $\mathfrak{g}_A$ in $\mathfrak{g}$ is given by 
$$
\mathfrak{m}=\left\{ (   \begin{pmatrix}    X_{11}   &   0    \\    X_{21} & 0  \end{pmatrix} ,
\begin{pmatrix}       0    &   Y_{12}   \\    0   &    0   \end{pmatrix}
  )  :  X_{11} \in P\FS P, \, X_{21} \in P^\perp \FS P, \, Y_{12} \in Q \FS Q^\perp \right \}.
$$
For the property of the exponential map,  it suffices to show that there exist two open neighborhoods $\cV$ and $\cW$ of $(0,0)  \in \mathfrak{g}$ and $(I,I) \in \cG$, respectively, such that $\exp_\cG: \cV \to \cW$ is bianalytic, and 
 $\exp_\cG(\cV \cap \mathfrak{g}_A)=\cW \cap \cG_A$. The nontrivial inclusion here can be formulated as follows. Given $(G,K) \in \cW \cap \cG_A$, we have to show that $G=e^X$ and $K=e^Y$, for some $(X,Y) \in \cV \cap \mathfrak{g}_A$.   Notice that $GA=AK$, yields $(G-I)^nA=A(K-I)^n$, for all $n \geq 0$. Thus, the logarithm can be defined by the usual series if we take $\cW$ small enough; we further consider $\cV=\exp_\cG^{-1}(\cW)$. Therefore,
  $$\log(G)A=\left(\sum_{n \geq 1}(-1)^{n+1}\frac{(G-I)^n}{n}\right)A=A \left(\sum_{n \geq 1}(-1)^{n+1}\frac{(K-I)^n}{n} \right)= A\log(K)\,.$$  
  Hence we may take $X=\log(G)$ and $Y=\log(K)$ with $X,\,Y\in \cV\cap\mathfrak{g}_A$.
\end{proof}

\begin{teo}\label{estruct variedad y esp homog}\label{teo ck es esp hom}
Let $A \in \cC \cR$ and let $\fS$ be a symmetrically-normed ideal.  Then $\cC_k(A)$ is a real analytic homogeneous space of $\cG \ell_{\FS} \times \cG \ell_{\FS}$. Furthermore, $\cC_k(A)$ is also a real analytic 
submanifold of $A + \fS$ with the same differential structure, whose tangent space at $B\in\cC_k(A)$ is 
$$
(T\cC_k(A))_B=\{  XB - BY :  X, Y \in \FS   \}\subset \fS=(T (A + \fS))_B.
$$ 
\end{teo}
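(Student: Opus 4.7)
The plan proceeds in three steps corresponding to the three assertions of the theorem.

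For the homogeneous space structure, I would combine the ingredients already in place. Theorem \ref{index decomp connected}$(i)$ gives that $\cG := \cG\ell_\fS \times \cG\ell_\fS$ acts transitively on $\cC_k(A)$; Lemma \ref{lie subg} applied with $B^{(k)}$ in place of $A$ (the proof being unchanged, since $B^{(k)} \in \cC\cR$) shows that the isotropy subgroup $\cG_{B^{(k)}}$ is a Banach-Lie subgroup of $\cG$; and Lemma \ref{local cross sections} furnishes continuous local cross sections of the orbit map $\pi_k$. By the standard theory of Banach-Lie group actions (see \cite{B, Up85}), these facts endow $\cG/\cG_{B^{(k)}}$ with a real analytic manifold structure for which the quotient map is an analytic submersion; the orbit map then descends to an analytic bijection onto $\cC_k(A)$, which the local cross sections force to be a homeomorphism for the topology induced by $d_\fS$. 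Transporting the analytic structure through this bijection makes $\pi_k$ an analytic submersion and $\cC_k(A)$ a real analytic homogeneous space of $\cG$.

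The tangent space description follows from differentiating the action. Every tangent vector at $B = GB^{(k)}K^{-1}$ arises as $\gamma'(0)$ for a curve $\gamma(t) = G e^{tX} B^{(k)} e^{-tY} K^{-1}$ with $X, Y \in \fS$, and a direct computation gives $\gamma'(0) = (GXG^{-1})\, B - B\, (KYK^{-1})$. Since conjugation by elements of $\cG\ell_\fS$ preserves $\fS$, we obtain $(T\cC_k(A))_B \subseteq \{X'B - BY' : X', Y' \in \fS\}$, and the reverse inclusion follows from surjectivity of the tangent map of $\pi_k$ at $(G,K)$.

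The main technical step is establishing the submanifold structure in $A + \fS$. By homogeneity it suffices to verify the submanifold property at a single point, and for notational simplicity I take $k = 0$ and $B = A$. The key claim, from which the closed complementedness of $(T \cC_0(A))_A$ in $\fS$ and an explicit chart both follow, is that with $P = P_{R(A)}$ and $Q = P_{N(A)^\perp}$,
\begin{equation*}
\{XA - AY : X, Y \in \fS\} = \{T \in \fS : (I-P) T (I-Q) = 0\}.
\end{equation*}
The inclusion $\subseteq$ is immediate from $A(I-Q) = 0$ and $(I-P)A = 0$. For $\supseteq$, given such $T$, the operators $X := TQA^\dagger$ and $Y := -A^\dagger T(I-Q)$ lie in $\fS$ and satisfy $XA - AY = T$, as one checks using $AA^\dagger = P$, $A^\dagger A = Q$, and the hypothesis on $T$. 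The right-hand side is the kernel of the bounded idempotent $T \mapsto (I-P)T(I-Q)$ on $\fS$, and is therefore a closed complemented subspace with complement $(I-P)\fS(I-Q)$. Combining this splitting with the continuous local cross section of Lemma \ref{local cross sections} and the exponential map of $\cG$ produces a real analytic chart of $A + \fS$ at $A$ adapted to the decomposition, simultaneously establishing the submanifold property and the compatibility of the two analytic structures on $\cC_k(A)$.
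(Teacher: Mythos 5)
Your proposal is correct and follows essentially the same route as the paper: homogeneous space structure from Lemma \ref{lie subg}, the quotient theory of \cite{Up85} and the continuous cross sections of Lemma \ref{local cross sections}; tangent space by differentiating the action; submanifold property by exhibiting $\{XA-AY: X,Y\in\fS\}$ as a closed complemented subspace of $\fS$. Your identification of that subspace as the kernel of the idempotent $T\mapsto (I-P)T(I-Q)$, with the explicit solutions $X=TQA^\dagger$, $Y=-A^\dagger T(I-Q)$, is just a coordinate-free rephrasing of the paper's block-matrix computation.
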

\begin{proof}
\noi We may suppose that $k=0$ and $A=B^{(0)}=B$. Notice that the isotropy subgroup of the action $\pi_0$ is given
by $\cG_A=\{    (G,K) \in \cG\ell_{\FS} \times  \cG\ell_{\FS} : GA=AK   \}$.
From Lemma \ref{lie subg}, $\cG_A$ is a Banach-Lie subgroup of $\cG=\cG \ell_\fS \times \cG \ell_\fS$. Then according to \cite[Thm. 8.19]{Up85} we get that the quotient space $\cG/ \cG_A$ has the structure of real analytic manifold such that $\pi_0 : \cG \to \CA$ is a real analytic submersion.  Furthermore, Lemma \ref{local cross sections} 
implies that $\cC_0(A)$ is homeomorphic to the quotient $\cG/ \cG_A$, so that $\cC_0(A)$ inherits the 
real analytic homogeneous space structure from  $\cG/ \cG_A$.

Now we show that $\CA$ is a submanifold of $A + \fS$.   We first observe that from the previous facts the tangent space $(T\CA)_A$ of $\CA$ at $A$ can be identified as
$$
(T\CA)_A \simeq \mathfrak{g} / \mathfrak{g}_A \simeq \{  XA-AY : X,Y \in \FS  \}\,, 
$$
where we have used that $(T\cG_A)_A=\mathfrak{g}_A=\{(X,Y) \in \mathfrak{g}: XA-AY =0\}$, so that the elements in the quotient space $\mathfrak{g} / \mathfrak{g}_A$ can be identified 
with $\{  XA-AY : X,Y \in \FS  \}$  as above.
We put $P=P_{R(A)}$ and $Q=P_{N(A)^\perp}$. If we take a tangent vector $V=XA-AY$, then $PVQ=P(XA- AY)Q$,   $PV Q^\perp=-P AYQ^\perp$,   $P^\perp VQ=P^\perp XA Q$ and $P^\perp V Q^\perp=0$. Notice that the identity 
$PV Q^\perp=-P AYQ^\perp$ is equivalent to $PV Q^\perp= -A (QYQ^\perp)$,
 meanwhile $P^\perp VQ=P^\perp XA Q$ is equivalent to $P^\perp VQ=(P^\perp X P)AQ$. Thus, the operators $PXAQ=(PXP)AQ$ and $PAYQ=A(QYQ)$, which appear in $PVQ$, are independent of the operators $PV Q^\perp$ and $P^\perp VQ$ (since these last operators can be determined in terms of the independent blocks $QYQ^\perp$ and $P^\perp X P$, respectively).
Therefore, as an operator from  $\cH=R(Q)\oplus N(Q)$   to $\cH=R(P)\oplus N(P)$  we get that $V$ has the form
$$
V= \begin{pmatrix}     Z_{11}   &   Z_{12}  \\   Z_{21}   &   0   \end{pmatrix}.
$$
Conversely,  if $V$ is an operator having such matrix representation, then we may take
$$
X=\begin{pmatrix}  0     &   0  \\   Z_{21}A^\dagger  &  0  \end{pmatrix}, \, \, \, \, 
Y=\begin{pmatrix}  A^\dagger Z_{11}     &   A^\dagger Z_{12}  \\  0  &  0  \end{pmatrix},
$$
which satisfy $V=XA-AY$. Then the tangent space is given by 
\begin{equation}\label{eq tang cc0}
(T\CA)_A =\left\{    \begin{pmatrix}     Z_{11}   &   Z_{12}  \\   Z_{21}   &   0   \end{pmatrix}: Z_{11} \in P \FS Q, \, Z_{12} \in P \FS Q^\perp, \, Z_{21}  \in  P^\perp \FS Q   \right\}.
\end{equation}
From this last representation is now evident that $(T\CA)_A$ is closed in  $\FS$ (tangent space of $A + \FS$), and it has a closed supplement    defined  by
$$
\mathfrak{n}=\left\{    \begin{pmatrix}    0   &  0 \\     0 &   Z_{22}   \end{pmatrix}: Z_{22} \in P^\perp \FS Q^\perp  \right\}.
$$ 
Thus the inclusion map $\iota: \cC_0 \simeq \cG / \cG_A \to  A + \fS$, which is analytic, satisfies that its tangent map has a (closed) complemented range at every point.  This means that $\iota$ is an immersion. Now we recall that the quotient topology on $\CA$ is always stronger than the relative topology, but in this case  both topologies coincide as a consequence of Lemma \ref{local cross sections}. Hence we can apply  \cite[Prop. 8.7]{Up85} to conclude that $\CA$ is a real analytic submanifold of $A+\FS$. Furthermore, the manifold structure as a homogeneous space coincides with that as a submanifold of $A+\FS$. Notice that the tangent map of $\pi_0$ at $(I,I)$ is given by $T_{(I,I)} \pi_0: \fS \times \fS \to (T\CA)_A$, $T_{(I,I)}(X,Y)=XA-AY$. Since this map is a submersion, it follows that $(T\CA)_A=R(T_{(I,I)}\pi_0)=\{ XA- AY : X,Y \in \fS \}$.
\end{proof}

\subsection{Real analyticity of the Moore-Penrose inverse}\label{sec real analic pert mp}

In this subsection we show that the Moore-Penrose inverse is a real analytic map between Banach manifolds. 
Hence, we consider the decomposition of $\cC\cR\cap (A+\fS)$ into the connected manifolds $\cC_k(A)$, $k\in\mathbb J_A$, in which the Moore-Penrose inverse is a continuous map. We further consider each maximal continuity set $\cC_k(A)$ endowed with its homogeneous space structure induced by the action of $\cG\ell_\fS\times \cG\ell_\fS$, or equivalently, its submanifold structure.

\begin{rem}\label{analytic comp and real}
Let $E$, $F$ be  complex Banach spaces, let $\Omega \subseteq E$ be an open set  and let $f:\Omega \subseteq E \to F$ be a complex analytic function. Consider $E_0$, $F_0$  real  closed subspaces of $E$ and $F$, respectively, and suppose that $f(E_0)\subseteq F_0$. We claim that the function $f_0: \Omega \cap E_0 \to F_0$, $f_0=f|_{\Omega \cap E_0}$, is real analytic.  
Since $f$ is complex analytic, for every $x_0 \in \Omega$, there exists a convergent power series $\sum_{n \geq 0} f_n$ such that $f(x)=\sum_{n \geq 0} f_n(x-x_0)$ locally at $x_0$,   where each $f_n$ is a continuous $n$-homogeneous polynomial defined on $E$ with values on $F$.  
Denote by $\tilde{f}_n$  the continuous multilinear function associated with $f_n$. 
To prove  our claim, it suffices to check that if $f$ is restricted to $\Omega \cap E_0$,  then the corresponding multilinear functions satisfy  $\tilde{f}_n(E_0^n) \subseteq F_0$.     The case $n=0$, that is $f_0 \in F_0$, follows  by the  assumption $f(E_0)\subseteq F_0$. Next we use that $f$ is complex analytic, so in particular $f$ is a $C^\infty$ function  such that its  derivatives satisfy $f^{(n)}(x_0)=n! \,\tilde{f}_n$ for all $n \geq 1$ (see  \cite[Corol. 1.8]{Up85}).  
 For $x_0, x \in E$, notice that the  Gateux derivative at $x_0$ in the direction of $x$  gives
$$
\tilde{f}_1(x) =  f'(x_0) x=\lim_{t \to 0}  \frac{f(x_0 + t x )   - f(x_0)}{t} \in F_0 ,
$$ 
since $f(E_0) \subset F_0$ and $F_0$ is closed. Hence $\tilde{f}_1 (E_0) \subseteq F_0$. We can now use again the Gateaux derivative of $\tilde{f}_1$ to get $\tilde{f}_2(E_0 \times E_0) \subseteq F_0$. Continuing with this argument we can obtain  $\tilde{f}_n(E_0^n) \subseteq F_0$, for all $n\geq 1$.
\end{rem}

\begin{lem}\label{smooth unitaries}
Let $\cS \subseteq \cH$ be a closed subspace. Then there exists 
a real analytic map $\cG \ell_{\FS} \to \cU_{\FS}$, $T \mapsto U_T$, such that 
$P_{T(\cS)}=U_T P_\cS U_T ^*$, for $T\in \cG \ell_{\FS}$. 
\end{lem}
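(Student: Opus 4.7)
The plan is to construct $U_T$ explicitly by gluing together two partial isometries obtained from polar decompositions of $TP_\cS$ and $T^{-*}(I-P_\cS)$, and then to establish its real analyticity via the holomorphic functional calculus in the unital Banach algebra $\tilde{\fS}$ from Remark \ref{restricted groups}.

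First I would introduce the auxiliary positive operators
\[
B_T := P_\cS T^*TP_\cS + (I-P_\cS), \qquad B_T' := (I-P_\cS)T^{-1}T^{-*}(I-P_\cS) + P_\cS.
\]
Since $T - I \in \fS$, both belong to $I + \fS$, and since $T$ is invertible, both are strictly positive operators on $\cH$ with spectrum contained in a compact subset of $(0, \infty)$. Hence $B_T^{-1/2}$ and $B_T'^{-1/2}$ can be defined via the holomorphic functional calculus in $\tilde{\fS}$, and the assignments $T \mapsto B_T^{\pm 1/2}$ and $T \mapsto B_T'^{\pm 1/2}$ are real analytic maps from $\cG \ell_\fS$ to $I + \fS$.

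Next I would set
\[
V_1(T) := TP_\cS B_T^{-1/2}, \qquad V_2(T) := T^{-*}(I - P_\cS)\,B_T'^{-1/2}, \qquad U_T := V_1(T) + V_2(T).
\]
A direct $2\times 2$ block computation with respect to the decomposition $\cH = \cS \oplus \cS^\perp$ shows that $V_1(T)$ is a partial isometry with initial space $\cS$ and final space $T(\cS)$; in fact $V_1(T)$ is exactly the polar factor of the closed range operator $TP_\cS$, recovered through the invertible replacement $B_T$ in place of $|TP_\cS|^\dagger$. Symmetrically, using that $(T^*)^{-1}(\cS^\perp) = T(\cS)^\perp$ because $T$ is invertible, $V_2(T)$ is the polar factor of $T^{-*}(I-P_\cS)$, with initial space $\cS^\perp$ and final space $T(\cS)^\perp$. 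Since their initial (and final) spaces are orthogonal complements, $U_T$ is a unitary operator on $\cH$ and
\[
U_T P_\cS U_T^* = V_1(T)V_1(T)^* = P_{T(\cS)}.
\]

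Finally, to place $U_T$ in $\cU_\fS$ I would decompose
\[
V_1(T) - P_\cS = (T - I)P_\cS B_T^{-1/2} + P_\cS(B_T^{-1/2} - I),
\]
observe that $B_T^{-1/2} - I \in \fS$ because the analytic functional calculus in $\tilde{\fS}$ preserves the ideal component, and conclude that $V_1(T) - P_\cS \in \fS$. The analogous reasoning yields $V_2(T) - (I - P_\cS) \in \fS$, whence $U_T - I \in \fS$. Real analyticity of $T \mapsto U_T$ is then inherited from the real analyticity of $T \mapsto T^{\pm 1}$, of $T \mapsto B_T^{\pm 1/2}$, $T \mapsto B_T'^{\pm 1/2}$, and of the algebraic operations in $I + \fS$. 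The main technical step is the real analyticity in the ideal norm of the inverse square root on the strictly positive part of $I + \fS$; this can be handled by representing $A^{-1/2}$ as a Cauchy contour integral in $\tilde{\fS}$ and using that $\fS$ is a closed ideal there, or alternatively by invoking the general real analyticity of operator monotone functions in the ideal norm (Corollary \ref{teo sobre AvH2}).
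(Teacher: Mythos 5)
Your construction is correct, but it takes a genuinely different route from the paper. The paper first manufactures an invertible operator that intertwines the two projections exactly: it forms the oblique projection $Q=TP_\cS T^{-1}$, corrects it to $T_0=P_{T(\cS)}Q+(I-P_{T(\cS)})(I-Q)\in\cG\ell_\fS$, sets $T_1=T_0T$ so that $T_1P_\cS T_1^{-1}=P_{T(\cS)}$, and then takes $U_T=T_1|T_1|^{-1}$, exploiting that $|T_1|$ commutes with $P_\cS$. You instead assemble $U_T$ directly as the sum of the polar factors of $TP_\cS$ and $T^{-*}(I-P_\cS)$, with the non-invertible moduli replaced by the invertible regularizations $B_T$, $B_T'$ so that the (inverse) square roots live in the unital algebra $\tilde{\fS}$; the block computations you indicate do check out ($V_1^*V_1=P_\cS$, $R(V_1)=T(\cS)$, $V_2^*V_2=I-P_\cS$, $R(V_2)=T^{-*}(\cS^\perp)=T(\cS)^\perp$, so $U_T$ is unitary and conjugates $P_\cS$ to $P_{T(\cS)}$), as does the decomposition showing $U_T-I\in\fS$. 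Both arguments ultimately rest on the same analytic engine, namely the real analyticity of $Z\mapsto Z^{\pm1/2}$ on the positive invertible part of $I+\fS_{sa}$ obtained by restricting the holomorphic functional calculus of $\tilde{\fS}$ (Remark \ref{analytic comp and real}). Your version avoids the oblique projection and the auxiliary operator $T_0$, at the price of tracking two partial isometries and their orthogonality; the paper's version needs only one square root but must verify that $T_0$ is invertible and lies in $\cG\ell_\fS$.

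One caveat: your fallback suggestion to invoke Corollary \ref{teo sobre AvH2} for the real analyticity of the square root would be circular, since the proof of that corollary (and the manifold structure of its domain $\cP_k(C)$) already uses Lemma \ref{smooth unitaries}. This is harmless here because your primary route (the contour-integral functional calculus in $\tilde{\fS}$) is self-contained, and if you prefer to cite the Ando--van Hemmen machinery you should point to Theorem \ref{teo sobre AvH}, whose proof is independent of the present lemma and applies since $B_T,B_T'\in\cG\ell^+\cap(I+\fS)$.
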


\begin{proof}
Let $T\in\cG\ell_\fS$ and set $P=P_{T(\cS)}$, $Q=T P_{\cS}  T^{-1}$. Then, 
$$
T_0=PQ + (I-P)(I-Q)=Q+(I-P)(I-Q)
$$ 
satisfies $PT_0=T_0Q=Q$, $T_0|_{R(Q)}=I|_{R(Q)}$ and $T_0|_{N(Q)}=(I-P)|_{N(Q)}$ is an isomorphism between $N(Q)$ and $N(P)$. Since $Q$ is an oblique projection we get that $\cH$ is the (non-orthogonal) direct sum of $R(Q)$ and $N(Q)$; thus, $T_0$ is an invertible operator. 

We now show  $T_0 \in \cG \ell_\fS$.  
Since $T\in\cG\ell_\fS$ then, by \cite[Thm. 3.5]{ECPM23} we get that $P-P_\cS=P_{R(TP_\cS)}-P_{R(P_\cS)}\in \fS$ (and furthermore, $[P:P_\cS]=0$). On the other hand, $Q-P_\cS=TP_\cS(T^{-1}-I)+(T-I)P_\cS\in\fS$.
 These facts imply that $Q-P\in\fS$ and hence $T_0 - I=P(Q-P)\in \fS$.

 Now set $T_1=T_0T$, which satisfies $T_1 \in \cG \ell_{\FS}$ and $T_1P_\cS T_1^{-1}=P$. Then, $T_1P_\cS = PT_1$ and $P_\cS T_1^*=T_1^* P$ gives $T_1^*T_1 P_\cS=P_\cS T_1^* T_1$. Thus, we get $|T_1|P_\cS=P_\cS|T_1|$. The unitary $U_T=T_1 |T_1|^{-1}$  now can be seen to satisfy $U_T \in \cU_{\FS}$ and $U_T P_\cS U_T^*=P_{T(\cS)}$.

To obtain that the map $T \mapsto U_T$ is real analytic and thereby complete the proof, we
express this map as a composition of real analytic maps. Recall that $\tilde{\fS}=\{  X + \lambda I : X \in \fS, \, \lambda \in \C \}$ is the unitalization of $\fS$, and consider its self-adjoint part, i.e. $\tilde{\fS}_{sa}:=\{ X + \lambda I : X=X^*, \, \lambda \in \R \}$ (see Remark \ref{restricted groups}).

Since $T$ is invertible, $P_\cS T^*TP_\cS \geq c P_\cS$, $c=\min_{\lambda \in \sigma(|T|)} \lambda^2$, so that $P_\cS T^*T|_\cS$ is invertible on $\cS$ and $P_\cS (P_\cS T^*TP_\cS)^\dagger P_\cS=(P_\cS T^*T|_\cS)^{-1}P_\cS$.
Then, the map $\cG \ell_\fS  \to   \tilde{\fS}_{sa}$, $T \mapsto P_{T(\cS)}=TP_\cS(P_\cS T^*T|_\cS)^{-1}P_\cS T^*$ is clearly real analytic since the operator adjoint, multiplication and inversion maps are real analytic. If $Q_T=TP_\cS T^{-1}$, then the map 
$\cG \ell_\fS \times \tilde{\fS}_{sa} \to \tilde{\fS}$, $(T,R)\mapsto Q_T + (I-R)(I - Q_T)$ is also real analytic. Therefore,
$$
T_0 : \cG \ell_\fS  \to \tilde{\fS}, \, \, \, \, T_0(T)=Q_T  +(I-P_{T(\cS)})(I- Q_T), 
$$ 
is real analytic. Since $\cG \ell_\fS$ is a submanifold of $\tilde{\fS}$ as we state in Remark \ref{restricted groups} $ii)$,  and $T_0(T) \in \cG \ell_\fS$ for every $T  \in \cG \ell_\fS$, it follows that  $T_0 :  \cG \ell_\fS \to  \cG \ell_\fS$ turns out to be real analytic. We conclude that $T_1 :  \cG \ell_\fS \to  \cG \ell_\fS$, $T_1(T)= T_0(T) T$, is real analytic. 

On the other hand, notice that $\sigma(Z)=\sigma_{\tilde{\fS}} (Z)$  for any $Z=(Z-I) + I \in\cG\ell_\fS\subset \tilde{\fS}$, where the left-hand spectrum is the spectrum as an operator on $\cH$ and the right-hand spectrum is the spectrum in the Banach algebra $\tilde{\fS}$. Put $\C_+ := \{ z \in \C : \Re(z) >0 \}$, and then take the open subset $\Omega=\{ Z \in \tilde{\fS} : \sigma(Z) \subseteq  \C_+  \}$ of $\tilde{\fS}$. Now we apply Remark \ref{analytic comp and real} with $E=F=\tilde{\fS}$, $E_0=F_0=\tilde{\fS}_{sa}$ and the map $f: \Omega \to \tilde{\fS}$, $f(Z)=Z^{1/2}$, which is certainly complex analytic by the holomorphic functional calculus in the Banach algebra $\tilde{\fS}$ (see, e.g., \cite[p. 30]{Up85}). 
We thus get that $f_0: \Omega \cap \tilde{\fS}_{sa} \to \tilde{\fS}_{sa}$, $f_0(Z)=Z^{1/2}$, is real analytic. For $T \in \cG\ell_\fS$ notice that $|T| -I=(|T| + I)^{-1}(|T|^{2} - I) \in \fS$, whence $T^*T=|T|^2,\,|T| \in \cG\ell_\fS $. 
Hence the maps $\cG \ell_\fS \to  \cG \ell_\fS$, $T\mapsto |T|=f_0(T^*T)$ and $T \mapsto T |T|^{-1}$ are  real analytic. Since $\cU_\fS$ is a submanifold $\cG \ell_\fS$, we can co-restrict the previous map and find that 
$\cG \ell_\fS \to  \cU_\fS$, $T \mapsto T |T|^{-1}$, is real analytic. Finally, we obtain that the map $\cG \ell_\fS \to \cU_\fS$, $T \mapsto U_T$, where $U_T=T_1(T) |T_1(T)|^{-1}$ is real analytic.
 \end{proof}

\begin{rem}\label{dagger inv}
For fixed  $A \in \cC \cR$ we may also express $ \cC \cR  \cap (A^\dagger + \fS)$ as follows
$$
  \cC \cR \cap (A^\dagger  + \fS)=\bigcup_{k \in \mathbb{J}_A}\cC_k (A^\dagger),
$$
where $\mathbb{J}_A=\mathbb{J}_{A^\dagger}$ is the set defined in Notation \ref{nota conv inf}, and we set
$$
\cC_k (A^\dagger) := \{  B \in \cC \cR : B - A^\dagger \in \fS , \,  [P_{N(B)}:P_{N(A^\dagger)}]= k  \}.
$$
In particular, by Theorem \ref{estruct variedad y esp homog}, $\cC_k(A^\dagger)$ is also a real 
analytic homogeneous space of $\cG=\cG\ell_\fS\times\cG\ell_\fS$ and a  submanifold of $A^\dagger +\fS$. From Lemma \ref{ab y moore penrose}, we know that $B - A \in \fS$ if and only if $B^\dagger - A^\dagger \in \fS$. Thus we can consider the bijection $\mu:   \cC \cR \cap (A  + \fS)  \to   \cC \cR \cap (A^\dagger  + \fS)$, $\mu(B)=B^\dagger$. Since $[P_{N(B^\dagger)} :  P_{N(A^\dagger)}]=-[P_{N(B^\dagger)^\perp} :  P_{N(A^\dagger)^\perp}]=-[P_{R(B)} :  P_{R(A)}]=[P_{N(B)} :  P_{N(A)}]$  (the last equality follows from Theorem \ref{index decomp connected}), we get  $\mu(\cC_k(A))=\cC_k (A^\dagger)$, for $k \in \mathbb{J}_A$. Hence $\mu: \cC_k(A) \to \cC_k (A^\dagger)$, $\mu(B)=B^\dagger$, is a homeomorphism.
\end{rem}

 The previous remark concerns the map defined by the Moore-Penrose inverse at a topological level. The geometric structures we have constructed in Theorem \ref{teo ck es esp hom} allow us to consider the Moore-Penrose inverse as a map between Banach manifolds. We can now give one of our main results about the Moore-Penrose inverse in this framework.

\begin{teo}\label{teo pseudo in real analytic}
Let $A \in \cC \cR$ and let $\fS$ be a symmetrically-normed ideal. Then for every $k\in\mathbb{J}_A$ the map 
$$
\mu: \cC_k(A) \to \cC_k(A^\dagger),  \, \, \, \mu(B)=B^\dagger,
$$
 is real bianalytic between these Banach manifolds, and its tangent map is given by
$$
(T _B \mu)(V)= - B^\dagger V B^\dagger    +   (B^*B)^\dagger V^* (I-BB^\dagger)+  (I- B^\dagger B)V^*(BB^*)^\dagger ,
$$   
for  $B \in \cC_k(A)$ and  $V \in (T \cC_k(A))_B$.
\end{teo}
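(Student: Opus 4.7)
The plan is to establish real analyticity of $\mu$ locally at each $B_0 \in \cC_k(A)$ by expressing $B \mapsto B^\dagger$ as a composition of real analytic ingredients in the $\fS$-norm, and then to deduce bianalyticity from the involutive character of the Moore-Penrose inverse. Fix $B_0 \in \cC_k(A)$ and use it as the base point for the homogeneous space structure of Theorem \ref{teo ck es esp hom}; then the orbit map $\pi: \cG \ell_\fS \times \cG \ell_\fS \to \cC_k(A)$, $(G,K)\mapsto G B_0 K^{-1}$, is a real analytic submersion and therefore admits a real analytic local cross section $(\sigma_1, \sigma_2)$ on a neighborhood $\cW$ of $B_0$. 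From $B = \sigma_1(B)\, B_0\, \sigma_2(B)^{-1}$ one reads off $R(B) = \sigma_1(B)(R(B_0))$ and $N(B)^\perp = \sigma_2(B)^{-*}(N(B_0)^\perp)$; feeding these analytic $\cG \ell_\fS$-valued maps, together with the subspaces $R(B_0)$ and $N(B_0)^\perp$, into Lemma \ref{smooth unitaries} produces real analytic maps $B \mapsto V_B,\, U_B \in \cU_\fS$ on $\cW$ such that $V_B\, P_{R(B_0)}\, V_B^* = P_{R(B)}$ and $U_B\, P_{N(B_0)^\perp}\, U_B^* = P_{N(B)^\perp}$.

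Define $\tilde B = V_B^* B U_B$, which is real analytic in $B$ and, in view of the identities above, satisfies $N(\tilde B) = N(B_0)$ and $R(\tilde B) = R(B_0)$. Set $P_0 = P_{N(B_0)^\perp}$. A block computation with respect to $\cH = R(P_0) \oplus N(P_0)$ shows that $\tilde B^* \tilde B + I - P_0$ is invertible in $\cB(\cH)$ and that $(\tilde B^* \tilde B)^\dagger = P_0\, (\tilde B^* \tilde B + I - P_0)^{-1}$, so that
\begin{equation*}
\tilde B^\dagger \;=\; (\tilde B^* \tilde B)^\dagger \tilde B^* \;=\; P_0 \, (\tilde B^* \tilde B + I - P_0)^{-1} \, \tilde B^*.
\end{equation*}
Because the Moore-Penrose inverse commutes with unitary conjugation, I obtain the local formula $B^\dagger = U_B\, \tilde B^\dagger\, V_B^*$ on $\cW$.

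To upgrade this into real analyticity of $\mu: \cC_k(A) \to \cC_k(A^\dagger)$ with the target viewed as a submanifold of $A^\dagger + \fS$ (cf.\ Remark \ref{dagger inv}), I would verify $\fS$-norm convergence at each stage. Setting $C_0 = B_0^* B_0 + I - P_0 \in \cG \ell(\cH)$ and $\Delta = \tilde B^* \tilde B - B_0^* B_0$, the element $\Delta$ is a quadratic polynomial in $\tilde B - B_0$ and hence lies in $\fS$ and depends real analytically on $B$ in the $\fS$-norm. The Neumann expansion
\begin{equation*}
(C_0 + \Delta)^{-1} - C_0^{-1} \;=\; \sum_{n \geq 1} (-C_0^{-1} \Delta)^n \, C_0^{-1}
\end{equation*}
then converges in the $\fS$-norm for $\|\Delta\|_\fS$ small, thanks to the ideal estimate $\|X Y Z\|_\fS \leq \|X\|\,\|Y\|_\fS\,\|Z\|$ combined with $\|\cdot\| \leq \|\cdot\|_\fS$. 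Expanding $B^\dagger = U_B\, P_0\, (C_0 + \Delta)^{-1}\, \tilde B^*\, V_B^*$ and gathering the summands that carry at least one factor from $\fS$ (all but the trivial contribution from the identity parts of $U_B = I + (U_B - I)$ and $V_B^* = I + (V_B^* - I)$), I conclude that $B^\dagger - B_0^\dagger$ lies in $\fS$ and is a real analytic $\fS$-valued function of $B$. I expect this $\fS$-norm bookkeeping to be the main technical obstacle, since analyticity must be verified in the ideal-norm topology rather than the operator norm.

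For bianalyticity, note that $\mu^{-1}: \cC_k(A^\dagger) \to \cC_k(A)$ is again the map $C \mapsto C^\dagger$ since $(C^\dagger)^\dagger = C$, so the same construction applied with $A$ replaced by $A^\dagger$ shows that $\mu^{-1}$ is real analytic. Finally, to identify $T_B \mu$ at $B \in \cC_k(A)$ in a direction $V \in (T\cC_k(A))_B$, I would apply Wedin's identity \eqref{stewart identity} to the pair $(\gamma(t), B)$, where $\gamma$ is a smooth curve in $\cC_k(A)$ with $\gamma(0) = B$ and $\gamma'(0) = V$; dividing by $t$ and letting $t \to 0$ (the limit being legitimate from the already proven $\fS$-differentiability of $\mu$) produces exactly $-B^\dagger V B^\dagger + (B^* B)^\dagger V^* (I - B B^\dagger) + (I - B^\dagger B) V^* (B B^*)^\dagger$.
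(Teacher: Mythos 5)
Your proposal is correct and follows essentially the same route as the paper's proof: a real analytic local cross section of the submersion $\pi_0$, the analytic unitaries of Lemma \ref{smooth unitaries} to reduce $B$ to an operator with the fixed nullspace and range of the base point, an ordinary inversion on that fixed range to express the Moore--Penrose inverse analytically, and Wedin's identity \eqref{stewart identity} for the tangent map. The only (minor) difference is that you verify the $\fS$-norm analyticity of the inversion step by an explicit Neumann series, whereas the paper invokes the analyticity of inversion in the Banach--Lie group $\cG\ell_\fS(R(A))$; both are sound.
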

\begin{proof}
We may assume that $k=0$. As we observed in Remark \ref{dagger inv} the map $\mu$ in the statement is a homeomorphism. Now we recall two properties of the Moore-Penrose inverse. For $C \in \cC \cR$, and $U,V$ unitary operators, it holds that $(UCV)^\dagger=V^*C^\dagger U^*$, and that $C^\dagger$ is determined  by the conditions $C^\dagger|_{R(C)^\perp}=0$ and $C^\dagger|_{R(C)}=(C|_{R(C)})^{-1}$, where the inverse here is given by  $(C|_{R(C)})^{-1}:R(C) \to N(C)^\perp$ satisfying $(C|_{R(C)})^{-1} C|_{N(C)^\perp}=I_{N(C)^\perp}$ and $C (C|_{R(C)})^{-1}=I_{R(C)}$. According to Lemma \ref{smooth unitaries}, for every $(G,K)\in\cG\ell_\fS\times \cG\ell_\fS$ there are unitary operators $U_K, U_G \in \cU_{\FS}$ such that  $U_K(N(A)^\perp)=K(N(A)^\perp)$, and $U_G(R(A))=G(R(A))$. Then, 
\begin{align*}
(GAK^{-1})^\dagger & =U_K (U_G^*(GAK^{-1})U_K)^\dagger U_G^* \\
& =U_K \begin{pmatrix}     A^\dagger   &    0   \\  0 &   0    \end{pmatrix} 
\begin{pmatrix} [U_G^*(GAK^{-1})U_K A^\dagger ]^{-1}   &   0 \\ 0   &   0  \end{pmatrix}U_G^*\,,
\end{align*} where the block matrix representation of factor to the right is with respect to the decomposition $\cH=R(A)\oplus R(A)^\perp$ and the block matrix representation of the factor to the left is as an operator acting from $\cH=R(A)\oplus R(A)^\perp$ into $\cH=N(A)^\perp\oplus N(A)$.
The operator obtained by restriction $U_G^*(GAK^{-1})U_K A^\dagger : R(A) \to R(A)$ is invertible, and using that $P_{R(A)}=AA^\dagger$ we further get that $U_G^*(GAK^{-1})U_K A^\dagger  - P_{R(A)} \in \fS$. We write $\fS(R(A))$ for the corresponding symmetrically-normed ideal $\fS$ on $R(A)$.  By considering also the restricted invertible group on $R(A)$, 
 i.e. $\cG \ell_\fS (R(A)):=\{ G \in \cG\ell(R(A)) : G - I_{R(A)} \in \fS(R(A)) \}$, we can use that the inversion map is real analytic on this Lie group.
Since $\cU_{\FS}$  is a Lie group, the inversion is real analytic, and using  that $K \mapsto U_K$ and $G \mapsto U_G$ are real analytic by  Lemma \ref{smooth unitaries}, we get from the above expression that the map $\tilde{\pi}_0: \cG \ell_\fS \times \cG \ell_\fS \to \cC_0(A ^\dagger) $, $\tilde{\pi}_0(G,K) = (GAK^{-1})^\dagger$ is real analytic. By Theorem \ref{teo ck es esp hom} we conclude that $\pi_0:\cG\ell_\fS\times \cG\ell_\fS\rightarrow \cC_0(A)$ is real analytic, and hence it has real analytic local cross sections (\cite[Corol. 8.3]{Up85}). Thus for every $B \in \cC_0(A)$, there is an open set $B\in \cW \subseteq \cC_0(A)$, and a real analytic map $s:\cW \to \cG\ell_\fS\times\cG\ell_\fS$ such that  $\pi_0  \circ s=id|_{\cW}$. Therefore, 
we can write locally $\mu= \tilde{\pi}_0 \circ s$. This shows that $\mu$ is real analytic. Clearly, $\mu$ becomes a bianalytic map between these manifolds. 

Now we  can compute the tangent map at $B\in\cC_0(A)$ in the direction of a vector  $V=XB - BY\in (T \cC_0(A))_B$. Take the curve $\gamma(t)=e^{tX} B e^{-tY} \in \CA$, for some $X,Y \in \FS$,  and $t \in (-1, 1)$, which satisfies 
 $\gamma(0)=B$, $\dot{\gamma}(0)=V$. By Wedin's formula \eqref{stewart identity} and the continuity of $\mu$ in $\cC_0(A)$ we get that
\begin{align*}
(T_B \mu )(V)& =\frac{d}{dt}\Big\rvert_{t=0} \mu(\gamma(t)) \\
& =\lim_{t \to 0} \frac{1}{t} \{ -\gamma(t)^\dagger(\gamma(t)-\gamma(0))\gamma(0)^\dagger + (\gamma(t)^* \gamma(t))^\dagger (\gamma(t)^* - \gamma(0)^*)(I - \gamma(0)\gamma(0)^\dagger) + \\ & + (I-\gamma(t)^\dagger \gamma(t))(\gamma(t)^* - \gamma(0)^*) (\gamma(0)\gamma(0)^*)^\dagger  \} \\
& =    - B^\dagger V B^\dagger    +   (B^*B)^\dagger V^* (I-BB^\dagger)+  (I- B^\dagger B)V^*(BB^*)^\dagger .
\end{align*}

\end{proof}

\begin{rem}
\noi $i)$  For $A \in \cC \cR$, $G,K \in \cG \ell_{\FS}$, we may  compute $(GAK^{-1})^{\dagger}$ using the following formula proved in \cite[Thm. 2]{GP91}:
$$
(GAK^{-1})^\dagger=(AK^{-1})^*[AK^{-1}(AK^{-1})^* + I- AA^\dagger ]^{-1} A 
[(GA)^*GA + I- A^\dagger A]^{-1} (GA)^*.
$$
A straightforward consequence is that  $(G,K) \mapsto (GAK^{-1})^\dagger$ is a real analytic map. Thus, this gives another proof of the fact that $\mu$ is a real analytic map. However, the approach considered in the proof of Theorem \ref{teo pseudo in real analytic}, 
which is based on Lemma \ref{smooth unitaries}, is also needed in Section \ref{polar decomposition}.

\medskip

\noi $ii)$ Since $\cG \ell_\fS$ is also a complex Lie group, the same proof of Theorem \ref{estruct variedad y esp homog} also shows that $\cC_k$, $k \in \mathbb{J}_A$, are complex analytic 
homogeneous spaces and submanifolds. The operator adjoint that shows up in the proofs of Lemma \ref{smooth unitaries} and Theorem \ref{teo pseudo in real analytic}  implies that $\mu$ cannot be complex analytic.  We refer to \cite{LR} for a study of other generalized inverses as complex analytic mappings operator valued mappings of a complex variable.
\end{rem}

\section{Polar decomposition}\label{polar decomposition}

We first study the real analyticity of operator monotone functions defined on invertible positive perturbations by symmetrically-normed ideals. Then we develop a geometric study of the closed range positive operator perturbations by an ideal. This leads to a decomposition into connected sets, which are real analytic homogeneous  spaces (congruence orbits) and submanifolds. In the next section we use these results, and also the ones from the previous sections, to show that the maps given by the operator modulus and the partial isometry in the polar decomposition are real analytic fiber bundles.

\subsection{Real analyticity of operator monotone functional calculus}

We revisit the Ando-van Hemmen theory on the perturbation problem for operator monotone functions \cite{AvH}. 
We are interested in a rather different problem, related to the real analyticity of the operator monotone functional calculus with respect to operator ideal perturbations. Nevertheless, our approach is deeply influenced by the techniques from \cite{AvH}. Recall that a  function $f:[0,\infty)\rightarrow \R$ is said to be operator monotone if $C,D\in \cB(\cH)^+$ are such that $C \leq D$, then we have that $f(C)\leq f(D)$. In this case $f$ belongs to the Pick class. Hence, there exist a positive Borel measure $\nu$ on $(0,\infty)$ such that $\int_0^\infty (t^2+1)^{-1}\, d\nu(t)$ is finite, $\alpha\in\R$ and $\beta\in [0,\infty)$ such that 
\begin{equation}\label{eq int rep omf}
f(\la)=\alpha+\beta\,\lambda-\int_0^\infty \left(\frac{1}{t+\la}-\frac{t}{t^2+1}\right)\ d\nu(t)\quad \text{for} \quad \la\in [0,\infty)\,.\end{equation}
Moreover, notice that the expression for $f(\la)$ above is well defined for $\la\in \C\setminus (-\infty,0)$.
For example, if $f(\la)=\la^{1/2}$, for $\la\in [0,\infty)$, then 
\begin{equation}\label{eq rep raiz cuad}
\la^{1/2}= \frac{1}{\sqrt 2} -\int_0^\infty \left(\frac{1}{t+\la}-\frac{t}{t^2+1}\right) \frac{t^{1/2}}{\pi} \,dt\quad \text{for} \quad \la\in [0,\infty)\,.
\end{equation}
Using the integral representation in Eq. \eqref{eq int rep omf} for the operator monotone function $f(\la)$, then given $C \geq 0$ we can represent the self-adjoint operator
$$
f(C)=\alpha\,I+\beta\,C-\int_0^\infty \left((t\,I+C)^{-1}-\frac{t}{t^2+1}\,I\right)\ d\nu(t)\,.
$$

In what follows we use the notation $\cG \ell ^+:=\cG \ell^+ (\cH)=\{ G \in \cG \ell(\cH) : G > 0 \}$. For a fixed $C \in \cG \ell^+$  notice that $\cG \ell^+ \cap (C + \fS)$ is an open subset of the affine space $C + \fS$, where the topology is,  as usual, defined by the metric $d_\fS(B_1 , B_2)=\| B_1 - B_2 \|_\fS$, $B_1 , B_2 \in C + \fS$.  
We recall that $\fS_{sa}$ denotes the set of self-adjoint operators in $\fS$.

\begin{rem}\label{rem AvH1}
Consider a monotone operator function represented as in Eq. \eqref{eq int rep omf} and let $C,\,D\in\cG\ell^+$ be such that $D-C\in\fS$. In \cite{AvH} Ando and van-Hemmen showed, in particular, that $f(D)-f(C)$ lies in the maximal ideal associated to the symmetric norming function induced by $\fS$; for the notion of maximal ideal see \cite{GK60}. In this context, they noticed that
\begin{eqnarray*}
f(D)-f(C)&=&\beta(D-C)+\int_0^\infty \left((t\,I+C)^{-1}-(t\,I+D)^{-1}\right)\ d\nu(t) \\
   &=&\beta(D-C)+\int_0^\infty (t\,I+C)^{-1}( D-C)(t\,I+D)^{-1}\ d\nu(t) 
\end{eqnarray*}
The previous identities suggest to consider the operator valued function $h:[0,\infty)\rightarrow \fS_{sa}$ given by
$$
h(t)=(t\,I+C)^{-1}( D-C)(t\,I+D)^{-1}\quad \text{for} \quad t\in [0,\infty)\,.
$$
In this case it is straightforward to check that: 
\begin{enumerate}
\item \label{item2} $\|h(t)\|_\fS\leq q(t)$, where $q(t)$ is a bounded, continuous, positive and non-increasing function such that  $\int_{0}^{\infty} q(t)\ d\nu(t)<\infty$ (e.g. $q(t)=\|D-C\|_{\fS} ((t+\gamma_C)\,(t+\gamma_D))^{-1}$, $t\geq 0$);
\item \label{item3} If $\delta>0$, then $\| h(t+\delta)-h(t)\|_{\fS}\leq \alpha \ \delta \ r(t)$,  where $\alpha>0$ and $r:[0,\infty)\rightarrow [0,\infty)$ is a  continuous and non-increasing function, such that $$\lim_{t\rightarrow \infty}r(t)=0 \ \ \text{and} \ \ \int_0^\infty r(t)\ d\nu(t)<\infty\,.$$
\end{enumerate}
We point out that in the case above we can take $r(t)$ to be
$$r(t)= ((t+\gamma_C)(t+\gamma_D))^{-1}\,\|D-C\|_\fS \, ((t+\gamma_C)^{-1}+(t+\gamma_D)^{-1})\ , \ \ t\in [0,\infty)\,.$$
\end{rem}
The following lemma is an integral formulation of the fact that absolute convergence implies convergence in 
symmetrically-normed ideals.

\begin{lem}\label{lem conv abs}
Let $\fS$ be symmetrically-normed ideal, let $\nu$ be a positive Borel measure on $(0,\infty)$ and let $h:[0,\infty)\rightarrow \fS_{sa}$ be a function satisfying items 1 and 2 in Remark \ref{rem AvH1}. Then, 
$$
\int_0^\infty h(t)\ d\nu(t)\in \fS_{sa} \quad \text{and}\quad \left\|\int_0^\infty h(t)\ d\nu(t)\right\|_{\fS}\leq 
\int_0^\infty \|h(t)\|_{\fS}\ d\nu(t)\,.
$$
\end{lem}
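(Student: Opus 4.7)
The plan is to realize $\int_0^\infty h(t)\,d\nu(t)$ as the Bochner integral of $h$ in the Banach space $(\fS,\|\cdot\|_\fS)$, so that both assertions of the lemma become instances of the standard properties of vector-valued integration (the integral of a function with values in a closed subspace lies in that subspace, and the norm of the integral is dominated by the integral of the norm).

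First, I would verify that $h$ is norm-continuous. For $0\le s<t$ with $\delta=t-s$, item 2 together with the monotonicity of $r$ gives
$$\|h(t)-h(s)\|_\fS\le \alpha\,\delta\, r(s),$$
and the continuity of $r$ on $[0,\infty)$ upgrades this to joint continuity of $h:[0,\infty)\to\fS_{sa}$. In particular, the image $h([0,\infty))$ is a separable subset of $\fS_{sa}$, and Borel measurability of $h$ follows from continuity. By Pettis's theorem, $h$ is then strongly $\nu$-measurable as an $\fS$-valued map, which sidesteps the fact that $\fS$ itself need not be separable in its own norm.

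Next, item 1 provides the integrable majorant $\|h(t)\|_\fS\le q(t)$ with $\int_0^\infty q(t)\,d\nu(t)<\infty$, so $t\mapsto \|h(t)\|_\fS$ is $\nu$-integrable. Combined with the strong measurability of the previous step, this is the definition of Bochner $\nu$-integrability. Hence the Bochner integral $I:=\int_0^\infty h(t)\,d\nu(t)$ exists as an element of $\fS$ and satisfies $\|I\|_\fS\le \int_0^\infty\|h(t)\|_\fS\,d\nu(t)$, which is one of the defining properties of the Bochner integral. Since $\fS_{sa}$ is a real-closed subspace of $\fS$ and every $h(t)$ lies in $\fS_{sa}$, the integral $I$ itself lies in $\fS_{sa}$; equivalently, one can see this by applying the adjoint inside the integral sign, as $X\mapsto X^*$ is a bounded conjugate-linear map on $\fS$.

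The only subtle point is the potential non-separability of $\fS$ in its own norm, which is precisely why the continuity furnished by item 2 is needed alongside the majorant provided by item 1: continuity forces $h$ to have separable range and thereby meet the hypotheses of Pettis's theorem. An entirely elementary alternative would be to approximate $h$ by step functions associated with dyadic partitions of the intervals $[0,n]$, use item 2 to show that the resulting simple integrals form a $\|\cdot\|_\fS$-Cauchy sequence (via the summability of $r$ against $\nu$), and use item 1 to control the tails beyond $[0,n]$; the two viewpoints lead to the same integral and the same norm estimate.
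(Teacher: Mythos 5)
Your proof is correct, but it takes a genuinely different route from the paper's. You invoke the general theory of Bochner integration: item 2 gives (local) Lipschitz continuity of $h$ in $\|\cdot\|_\fS$, hence separable range and strong measurability via Pettis, and item 1 supplies the integrable majorant, so the Bochner integral exists in $\fS_{sa}$ and the norm inequality is the standard Bochner estimate. The paper instead gives a self-contained, elementary argument: it takes the weak (operator-valued) integral as the definition of $\int_0^\infty h(t)\,d\nu(t)$, builds dyadic Riemann sums $R_p=\sum_m \nu([\tfrac{m-1}{2^p},\tfrac{m}{2^p}))\,h(\tfrac{m}{2^p})$, uses item 2 to show $\{R_p\}$ is Cauchy in $\fS$ and converges in operator norm to the weak integral, and deduces both conclusions --- exactly the ``entirely elementary alternative'' you sketch in your last paragraph. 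The one point your main argument leaves implicit is the identification of the Bochner integral in $\fS$ with the operator $\int_0^\infty h(t)\,d\nu(t)$ as the paper defines it (via $\langle \cdot\,x,y\rangle$); this is immediate because the functionals $X\mapsto\langle Xx,y\rangle$ are $\|\cdot\|_\fS$-continuous (as $\|\cdot\|\le\|\cdot\|_\fS$) and commute with the Bochner integral, but it is worth stating since otherwise the two assertions of the lemma are about a possibly different object. With that remark added, your argument is a clean and shorter substitute for the paper's appendix; what the paper's version buys is independence from vector-valued measure theory and an explicit quantitative rate $O(2^{-p})$ for the dyadic approximations, which it reuses to compare $\|R_p\|_\fS$ with $\int_0^\infty\|h(t)\|_\fS\,d\nu(t)$.
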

\begin{proof}
See Section \ref{apendixity}.
\end{proof}

\begin{pro}\label{pro AvH1}
Let $C,D\in \cG\ell^+$ be such that $D-C\in\fS$, and let $f:[0,\infty)\rightarrow \R$ be an operator monotone function with integral representation as in Eq. \eqref{eq int rep omf}. Then, we have that $f(D)-f(C)\in\fS_{sa}$ is such that $$\|f(D)-f(C)\|_\fS\leq \|D-C\|_\fS\ \left ( \beta +\int_0^\infty \frac{1}{(t+\gamma_C)\,(t+\gamma_D)}\ d\nu(t)\right)\,.$$
\end{pro}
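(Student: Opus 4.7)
The strategy is the one suggested by Remark \ref{rem AvH1}: substitute the integral representation \eqref{eq int rep omf} for $f$ into $f(D)-f(C)$, use the resolvent identity, and apply Lemma \ref{lem conv abs} to the resulting operator-valued integrand.

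First I would write, using \eqref{eq int rep omf},
\[
f(D)-f(C) = \beta(D-C) + \int_0^\infty \bigl[(tI+C)^{-1}-(tI+D)^{-1}\bigr]\, d\nu(t),
\]
where the $\alpha$ and $\frac{t}{t^2+1}$ terms cancel. The resolvent identity
\[
(tI+C)^{-1}-(tI+D)^{-1} = (tI+C)^{-1}(D-C)(tI+D)^{-1}
\]
lets me introduce $h(t):=(tI+C)^{-1}(D-C)(tI+D)^{-1}\in \fS_{sa}$, which is self-adjoint because $C$ and $D$ commute with their resolvents and $(tI+C)^{-1}, (tI+D)^{-1}$ are self-adjoint positive while $D-C$ is self-adjoint (using that the product is forced self-adjoint by symmetry of the integral after noting $h(t)^*=(tI+D)^{-1}(D-C)(tI+C)^{-1}$; since we will only need that the integral takes values in $\fS_{sa}$, it suffices to symmetrize if needed, or to observe the final sum is self-adjoint because $f(D)-f(C)$ is).

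Next, I would verify that $h$ satisfies the hypotheses of Lemma \ref{lem conv abs}. For item 1, by submultiplicativity of $\|\cdot\|_\fS$ and the standard estimate $\|(tI+C)^{-1}\|\leq (t+\gamma_C)^{-1}$, $\|(tI+D)^{-1}\|\leq (t+\gamma_D)^{-1}$ (where $\gamma_C,\gamma_D>0$ since $C,D\in\cG\ell^+$),
\[
\|h(t)\|_\fS \leq \frac{\|D-C\|_\fS}{(t+\gamma_C)(t+\gamma_D)} =: q(t),
\]
which is continuous, positive, non-increasing, and satisfies $\int_0^\infty q(t)\,d\nu(t)<\infty$ because $q(t)=O((t^2+1)^{-1})$ as $t\to\infty$ and the measure $\nu$ from \eqref{eq int rep omf} satisfies the integrability condition $\int_0^\infty(t^2+1)^{-1}d\nu(t)<\infty$. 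For item 2, writing $h(t+\delta)-h(t)$ as a telescoping sum
\[
\bigl[(t{+}\delta{+}C)^{-1}-(t{+}C)^{-1}\bigr](D{-}C)(t{+}\delta{+}D)^{-1} + (t{+}C)^{-1}(D{-}C)\bigl[(t{+}\delta{+}D)^{-1}-(t{+}D)^{-1}\bigr]
\]
and applying the resolvent identity to each bracket, we obtain a pointwise bound of the form $\|h(t+\delta)-h(t)\|_\fS\leq \alpha\,\delta\,r(t)$ with $r$ as in Remark \ref{rem AvH1}.

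Having verified the hypotheses, Lemma \ref{lem conv abs} yields $\int_0^\infty h(t)\,d\nu(t)\in\fS_{sa}$ together with the norm bound
\[
\Bigl\|\int_0^\infty h(t)\,d\nu(t)\Bigr\|_\fS \leq \int_0^\infty \|h(t)\|_\fS\,d\nu(t) \leq \|D-C\|_\fS\int_0^\infty \frac{d\nu(t)}{(t+\gamma_C)(t+\gamma_D)}.
\]
Combining this with the triangle inequality applied to $f(D)-f(C)=\beta(D-C)+\int_0^\infty h(t)\,d\nu(t)$ and factoring out $\|D-C\|_\fS$ gives exactly the stated estimate, and also shows $f(D)-f(C)\in\fS_{sa}$. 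The main technical obstacle is not conceptual but bookkeeping: ensuring $h$ fits the framework of Lemma \ref{lem conv abs} (in particular, self-adjoint-valuedness, which may require a symmetrization argument, and the continuity estimate in item 2); the resolvent bound $(tI+C)^{-1}\leq (t+\gamma_C)^{-1}I$ together with the submultiplicativity of the ideal norm do the rest of the work.
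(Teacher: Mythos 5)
Your proposal is correct and follows essentially the same route as the paper: both write $f(D)-f(C)=\beta(D-C)+\int_0^\infty h(t)\,d\nu(t)$ with $h(t)=(tI+C)^{-1}(D-C)(tI+D)^{-1}$ via the resolvent identity, check items 1 and 2 of Remark \ref{rem AvH1} using $\|(tI+C)^{-1}\|\leq (t+\gamma_C)^{-1}$, and then invoke Lemma \ref{lem conv abs}. Your aside on the self-adjointness of $h(t)$ is a reasonable extra precaution (the paper simply asserts $h(t)\in\fS_{sa}$), but it does not change the argument.
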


\begin{proof}
With the notation above, and arguing as in Remark \ref{rem AvH1} we get that
\begin{equation}\label{eq f diffcd1}
f(D)-f(C)=\beta\,(D-C)+\int_0^\infty h(t) \ d\nu(t) \quad \text{for} \quad h(t)=
(t\,I+C)^{-1}( D-C)(t\,I+D)^{-1}\,.
\end{equation}
Notice that $h(t)$ fulfills the conditions in items \ref{item2} and \ref{item3} in Remark \ref{rem AvH1} with 
$\|h(t)\|\leq \|D-C\|_{\fS} ((t+\gamma_C)\,(t+\gamma_D))^{-1}$, for $t\geq 0$. Then, by Lemma \ref{lem conv abs} we see that 
$$
\int_0^\infty h(t)\ d\nu(t)\in \fS_{sa} \quad \text{and}\quad \left\|\int_0^\infty h(t)\ d\nu(t)\right\|_{\fS}
\leq 
\int_0^\infty \frac{\|D-C\|_\fS}{(t+\gamma_C)\,(t+\gamma_D)}\ d\nu(t)\,.
$$
The result now follows from Eq. \eqref{eq f diffcd1} and the last remarks.
\end{proof}

%

Next we study the continuity of the functional calculus induced by a (fixed) monotone operator function, with respect to certain perturbations of a positive closed range operator. To do this, we consider the following facts.


\begin{rem}\label{section grass}
Let $P$, $Q_n$ be orthogonal projections such that $P-Q_n\in \fS$, for all $n\geq 1$, and $\|P-Q_n\|_\fS\rightarrow 0$. Then, there exists a sequence $\{ U_n \}_{n \geq 1}$ in $\cU_\fS$ such that $U_nQ_nU_n^*=P$, for $n\geq 1$, and $\|U_n-I\|_\fS\rightarrow 0$. Indeed, this is a consequence of 
\cite[Proposition 2.2.]{AL08} for the ideal $\fS_2$ of Hilbert-Schmidt operators; there it is shown the existence of continuous local cross sections for the map $\cU_{\fS_2}\ni U\mapsto U^*PU\subset P+(\fS_2)_{sa}$, where $\cU_{\fS_2}$ and $P+(\fS_2)_{sa}$ are endowed with the Hilbert-Schmidt  metric $d_{2}(C,D)=\|C-D\|_{2}$. The general case of a symmetrically-normed operator ideal follows with a straightforward adaption of the proof of the previous result.
\end{rem}

In what follows we let $\cC\cR^+=\{C\in\cC\cR:\ C\geq 0\}$. 

\begin{cor}\label{hay cont en p0}
Let $f:[0,\infty)\rightarrow \R$ be an operator monotone function and let 
$\fS$ be a symmetrically-normed ideal. Fix $C\in\cC\cR^+$ and let $\{D_n\}_{n\geq 1}$ be a sequence in $\cC\cR^+$ such that $C-D_n\in\fS$, $[P_{N(C)}:P_{N(D_n)}]=0$ for $n\geq 1$, and $\|D_n-C\|_\fS\rightarrow 0$. Then, there exists $n_0\geq 1$ such that $f(D_n)-f(C)\in\fS$, for $n\geq n_0$, and $\|f(D_n)-f(C)\|_\fS\rightarrow 0$.
\end{cor}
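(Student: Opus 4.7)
The plan is to reduce to the invertible case handled by Proposition \ref{pro AvH1}. The key step is first to replace each $D_n$ by a unitary conjugate $\tilde D_n$ having exactly the nullspace of $C$, and then to analyze the functional calculus block-diagonally with respect to $\cH = N(C) \oplus N(C)^\perp$.

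First I would apply Theorem \ref{pseudo conv} to $\{D_n\}$, whose hypothesis (i) is assumed. The equivalent condition (iii) gives $\|D_n^\dagger - C^\dagger\|_\fS \to 0$, and combined with $P_{N(B)} = I - B^\dagger B$ and submultiplicativity of $\|\cdot\|_\fS$, this forces $\|P_{N(D_n)} - P_{N(C)}\|_\fS \to 0$. By Remark \ref{section grass} there exist unitaries $U_n \in \cU_\fS$ with $U_n P_{N(D_n)} U_n^* = P_{N(C)}$ and $\|U_n - I\|_\fS \to 0$. Setting $\tilde D_n := U_n D_n U_n^* \in \cC\cR^+$ I obtain $N(\tilde D_n) = N(C)$, and the decomposition
$$\tilde D_n - C = U_n(D_n - C)U_n^* + (U_n - I)CU_n^* + C(U_n^* - I)$$
shows $\|\tilde D_n - C\|_\fS \to 0$.

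Next, with respect to $\cH = N(C) \oplus N(C)^\perp$, both $C$ and each $\tilde D_n$ are block-diagonal: $C = 0 \oplus C'$ and $\tilde D_n = 0 \oplus \tilde D_n'$ with $C', \tilde D_n'$ invertible positive operators on $N(C)^\perp$. The equivalence (i)$\Leftrightarrow$(ii) of Theorem \ref{pseudo conv} gives $\sup_n \|D_n^\dagger\| < \infty$, i.e.\ $\inf_n \gamma(D_n) > 0$; since conjugation by a unitary preserves $\gamma$, one has a uniform lower bound $\delta := \inf_n \gamma(\tilde D_n') > 0$. Applying Proposition \ref{pro AvH1} in $\cB(N(C)^\perp)$ with the inherited ideal I obtain
$$\|f(\tilde D_n') - f(C')\|_\fS \le \|\tilde D_n' - C'\|_\fS\left(\beta + \int_0^\infty \frac{d\nu(t)}{(t+\gamma(C))(t+\delta)}\right),$$
where the integral is finite and independent of $n$. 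Since $\|\tilde D_n' - C'\|_\fS \le \|\tilde D_n - C\|_\fS \to 0$, and $f$ acts block-diagonally so that $f(\tilde D_n) - f(C) = 0 \oplus (f(\tilde D_n') - f(C'))$, this forces $\|f(\tilde D_n) - f(C)\|_\fS \to 0$.

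To finish I would pass back from $\tilde D_n$ to $D_n$ using that continuous functional calculus commutes with unitary conjugation: $f(\tilde D_n) = U_n f(D_n) U_n^*$, so
$$\|f(D_n) - f(\tilde D_n)\|_\fS \le 2\,\|f(D_n)\|\,\|U_n - I\|_\fS.$$
As $\sup_n \|D_n\| < \infty$ and $f$ is continuous on bounded intervals, $\sup_n \|f(D_n)\| < \infty$, so the right-hand side vanishes and a triangle inequality concludes the argument. The main delicate point throughout is the \emph{uniform} control of the Ando--van Hemmen integrand across $n$, and this is exactly what the uniform reduced-minimum-modulus bound (ii) of Theorem \ref{pseudo conv} provides.
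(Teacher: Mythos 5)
Your proof is correct and follows essentially the same route as the paper's: reduce to the case $N(D_n)=N(C)$ via the unitaries of Remark \ref{section grass}, apply Proposition \ref{pro AvH1} on $N(C)^\perp$ with a uniform lower bound on the reduced minimum modulus, and transfer back by unitary conjugation. The only (immaterial) differences are the order of the two steps and that the paper obtains the uniform bound $\gamma_{D_n|_{\cH_0}}>\gamma_{C|_{\cH_0}}/2$ directly from $\|D_n-C\|_\fS\le\gamma_{C|_{\cH_0}}/2$ rather than from condition (ii) of Theorem \ref{pseudo conv}.
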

\begin{proof} Let $\{D_n\}_{n\geq 1}$ be as above and assume futher that $N(D_n)=N(C)$ for $n\geq 1$. 
Then 
the restrictions $D_n|_{\cH_0},\,C|_{\cH_0}\in\cB(\cH_0)$ are positive invertible operators acting on $\cH_0=R(C)$ such that $\|D_n|_{\cH_0} - C|_{\cH_0}\|_\fS\rightarrow 0$.
Let $n_0\geq 1$ be such that $\|D_n-C\|_\fS\leq \gamma_{C|_{\cH_0}}/2$, 
so that $\gamma_{D_n|_{\cH_0}}>\gamma_{C|_{\cH_0}}/2$, for $n\geq n_0$. In this case $$((t+\gamma_{D_n|_{\cH_0}})\,(t+\gamma_{C|_{\cH_0}}))^{-1}\leq 
4\, (t+\gamma_{C|_{\cH_0}})^{-2}\ , \ t\geq 0\,.$$
By Proposition \ref{pro AvH1} we get that $f(D_n|_{\cH_0})-f(C|_{\cH_0})\in\fS(\cH_0)$ for $n\geq n_0$, and that 
$\|f(D_n|_{\cH_0})-f(C|_{\cH_0})\|_\fS\rightarrow 0$. Now notice that 
$f(C)=f(C|_{\cH_0})+f(0)\,(I-P_{\cH_0})$ and similarly for $f(D_n)$, $n\geq 1$; thus, 
$f(D_n)-f(C)\in\fS$ for $n\geq n_0$ and 
$\|f(D_n)-f(C)\|_{\fS}=\|f(D_n|_{\cH_0})-f(C|_{\cH_0})\|_\fS\rightarrow 0$.

Consider now a general sequence $\{D_n\}_{n\geq 1}$ as in the statement above. 
Notice that by hypothesis and Theorem \ref{pseudo conv}, we get that $\|D_n^\dagger-C^\dagger \|_\fS\rightarrow 0$ and hence 
$\|P_{N(D_n)}-P_{N(C)}\|_\fS=\| D_n^\dagger D_n -C^\dagger C \|_\fS\rightarrow 0$. Let $\{U_n\}_{n\geq 1}$ be a sequence in $\cG\ell_\fS$ such that $U_n P_{N(D_n)} U_n^*=P_{N(C)}$ and $\|U_n-I\|_\fS\rightarrow 0$ (see Remark \ref{section grass}). Hence, $U_nD_nU_n^*-C\in\fS$, $N(U_nD_nU_n^*)=N(C)$ and $\|U_nD_nU_n^*-C\|_\fS\rightarrow 0$. By the first part of the proof we conclude that 
$\|f(U_nD_nU_n^*)-f(C)\|_\fS\rightarrow 0$. On the other hand, $f(U_nD_nU_n^*)=U_nf(D_n)U_n^*$ so $\|f(U_nD_nU_n^*)- f(D_n)\|_\fS\rightarrow 0$. The previous facts imply that 
$$
\| f(D_n)-f(C)\|_\fS\leq \|f(D_n)-f(U_nD_nU_n^*)\|_\fS+\| f(U_nD_nU_n^*)- f(C)\|_\fS\rightarrow 0\,.
 \qedhere$$
\end{proof}

In the proof of Lemma \ref{smooth unitaries} we have used that the square root is a real analytic map in the set 
$\cG \ell_\fS \cap \cG\ell^+$, by considering the enveloping unital Banach algebra $\tilde{\fS}$
(see Remark \ref{restricted groups}). Thus, the argument in that proof cannot be repeated when
the domain is changed to the set of perturbations $\{C+K \in \cG\ell^+: K \in \fS_{sa}\}$ endowed with the distance $d_{\fS}$, where $C\in\cG\ell^+$ is some fixed positive invertible operator.  Below we prove that the  square root, and moreover any operator monotone function, is real analytic on these more general domains. 

\begin{teo}\label{teo sobre AvH}
Let $f:[0,\infty)\rightarrow \R$ be an operator monotone function on $[0,\infty)$ with integral representation as in Eq. \eqref{eq int rep omf} and let $C\in\cG\ell^+$.
Consider $f:(C+\fS)\cap \cG\ell^+\rightarrow f(C)+\fS_{sa}$ given by $(C+\fS)\cap \cG\ell^+\ni D\mapsto f(D)\in f(C)+\fS_{sa}$. Then, $f$ is a real analytic function. Moreover, for $\|D-C\|_\fS<  \gamma_C$ we get a local series representation 
$f(D)=f(C)+\sum_{n=1}^\infty f_n(D-C)$, where
$$
f_1(D-C)=\beta\,(D-C)+\int_0^\infty  (tI+C)^{-1}\,(D-C)\,(t\,I+C)^{-1}\ d\nu(t)\in \fS_{sa} \, ,
$$
$$f_n(D-C)=(-1)^{n+1}\, \int_0^\infty  \left((tI+C)^{-1}(D-C)\right)^{n} (t\,I+C)^{-1}\ d\nu(t)\in \fS_{sa} \, , \ n\geq 2\,.$$
\end{teo}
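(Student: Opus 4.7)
The plan is to derive the explicit power series first, then read off real analyticity from it. Set $K=D-C\in\fS_{sa}$ and $R_t=(tI+C)^{-1}$, so $\|R_t\|\leq (t+\gamma_C)^{-1}$. Starting from the integral representation one has
\begin{equation*}
f(D)-f(C)=\beta\,(D-C)+\int_0^\infty \bigl(R_t-(tI+D)^{-1}\bigr)\ d\nu(t).
\end{equation*}
Under the assumption $\|K\|_\fS<\gamma_C$ (so $\|K\|<\gamma_C$), the identity $tI+D=(tI+C)(I+R_tK)$ gives a Neumann expansion
\begin{equation*}
(tI+D)^{-1}=\sum_{n=0}^\infty (-1)^n (R_tK)^n R_t,
\end{equation*}
convergent in operator norm uniformly in $t\geq 0$, and subtracting the $n=0$ term yields the formal expression for $R_t-(tI+D)^{-1}$.

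The central step is to justify integrating this series against $\nu$ and exchanging sum and integral in $\fS$-norm. Using the mixed H\"older-type inequalities $\|XY\|_\fS\leq \|X\|\,\|Y\|_\fS$ and submultiplicativity of $\|\cdot\|_\fS$, together with the fact that exactly one factor $K\in\fS$ need be measured in $\fS$-norm, I would estimate
\begin{equation*}
\|(R_tK)^n R_t\|_\fS\leq \|R_t\|^{n+1}\|K\|^{n-1}\|K\|_\fS\leq \frac{\|K\|^{n-1}\,\|K\|_\fS}{(t+\gamma_C)^{n+1}}.
\end{equation*}
Since $(t+\gamma_C)^{-(n+1)}\leq \gamma_C^{-(n-1)}(t+\gamma_C)^{-2}$ and $\int_0^\infty (t+\gamma_C)^{-2}\, d\nu(t)<\infty$ (a consequence of $\int_0^\infty(t^2+1)^{-1}d\nu(t)<\infty$), one obtains
\begin{equation*}
\sum_{n\geq 1}\int_0^\infty \|(R_tK)^n R_t\|_\fS\ d\nu(t)\leq M\,\|K\|_\fS\sum_{n\geq 1}\left(\|K\|/\gamma_C\right)^{n-1},
\end{equation*}
convergent when $\|K\|_\fS<\gamma_C$. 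An application of Lemma \ref{lem conv abs} to each term then justifies both that each $f_n(K)\in\fS_{sa}$ and the termwise integration that produces the stated formulas for $f_n(K)$.

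It remains to check that each $f_n$ defines a continuous $n$-homogeneous polynomial on $\fS_{sa}$ with values in $\fS_{sa}$, and to extend real analyticity globally. Homogeneity is immediate from the formula; continuity follows from the above estimate; self-adjointness of $f_n(K)$ is seen from $((R_tK)^n R_t)^*=R_t(KR_t)^n=(R_tK)^n R_t$, since $R_t$ and $K$ are self-adjoint and the two words coincide letter by letter. The associated continuous symmetric $n$-linear form is obtained by polarization. Thus in the $\fS$-ball of radius $\gamma_C$ around $C$ the map $D\mapsto f(D)$ has a convergent power series expansion in $\fS_{sa}$, hence is real analytic there. For an arbitrary $D_0\in(C+\fS)\cap\cG\ell^+$, we have $\gamma_{D_0}>0$ and the same argument (with $C$ replaced by $D_0$) produces a convergent expansion on the $\fS$-ball of radius $\gamma_{D_0}$ around $D_0$, with values in $f(D_0)+\fS_{sa}=f(C)+\fS_{sa}$ by Proposition \ref{pro AvH1}.

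I expect the main obstacle to be the precise bookkeeping in the $\fS$-norm bound: one must place the single ``ideal'' factor $K$ carefully so that each term is controlled by $(t+\gamma_C)^{-(n+1)}$ and, via the elementary dominating inequality above, by a uniformly $\nu$-integrable function with a geometric factor $(\|K\|/\gamma_C)^{n-1}$. Once this is in hand, Lemma \ref{lem conv abs} handles the Fubini-type exchange and the rest (self-adjointness, polynomial character, global real analyticity) follows by direct inspection.
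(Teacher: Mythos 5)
Your proposal is correct and follows essentially the same route as the paper's proof: the same resolvent identity $tI+D=(tI+C)(I+R_tK)$, the same Neumann expansion integrated term by term, the same dominating estimate $(t+\gamma_C)^{-(n+1)}\leq \gamma_C^{-(n-1)}(t+\gamma_C)^{-2}$, and the same appeal to Lemma \ref{lem conv abs}. The only detail you gloss over is that Lemma \ref{lem conv abs} also requires verifying the increment bound $\|h_n(t+\delta)-h_n(t)\|_\fS\leq\alpha\,\delta\,r(t)$ for each integrand $h_n$, which the paper checks explicitly and which follows routinely from the same resolvent estimates.
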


\begin{proof}
Given $C$ as above we show that $f$ admits a local power series around $C$, as in 
Remark \ref{analytic comp and real}. Indeed, assume that $D\in \cG\ell^+$ is such that $D-C\in \fS$. By Proposition \ref{pro AvH1} we get that 
$f(D)\in f(C)+\fS_{sa}$. 
As noticed in \cite{AvH} (see also Remark \ref{rem AvH1}) we have that
\begin{equation}\label{eq AvH1}
f(D)-f(C)=\beta(D-C)+\int_0^\infty 
(t\,I+C)^{-1}\,( D-C)\,(t\,I+D)^{-1}
\ d\nu(t)\,.
\end{equation}
Now, a closer look at the integrand  reveals that for $t\in (0,\infty)$,
\begin{eqnarray*}
 (t\,I+C)^{-1}\,( D-C)\,(t\,I+D)^{-1}& = & (t\,I+C)^{-1}\,( D-C)\,(t\,I+C+(D-C))^{-1}\\
 & = & (t\,I+C)^{-1}\,( D-C)\,(I+(t\,I+C)^{-1}(D-C))^{-1} \,(t\,I+C)^{-1}\,.
\end{eqnarray*}
Since $\gamma_{tI+C}=t+\gamma_C\geq \gamma_C>0$ then
$\|(tI+C)^{-1}\|\leq \gamma_C^{-1}$, for $t\in (0,\infty)$. Hence, if $\| D-C\|_\fS<\gamma_C$, then  we get that \begin{equation}\label{eq AvH2}
\| (tI+C)^{-1}(D-C)\|_\fS\leq \| (tI+C)^{-1}\|\,\|D-C\|_\fS
\leq \gamma_C^{-1}\,\|D-C\|_\fS <1\quad \text{for} \quad t\in (0,\infty)\,.\end{equation}
Thus, for $n\geq 1$ we have that 
$$
\| \left((tI+C)^{-1}(D-C)\right)^n\|\leq \| \left((tI+C)^{-1}(D-C)\right)^n\|_\fS\leq \| \left((tI+C)^{-1}(D-C)\right)\|_\fS^n\,
$$
since $\|\,\cdot\,\|_\fS$ is submultiplicative. The previous estimates show that the geometric series 
$$
\sum_{n=0}^\infty (-1)^n \left((tI+C)^{-1}(D-C)\right)^n=( I+(tI+C)^{-1}(D-C))^{-1}\in\cG\ell_\fS
$$
is $\|\,\cdot\,\|_\fS$-absolute convergent and $\|\,\cdot\,\|_\fS$-uniformly convergent for $t\in (0,\infty)$, by Eq. \eqref{eq AvH2}. In particular, the series  
is $\|\,\cdot\,\|$-absolute convergent and $\|\,\cdot\,\|$-uniformly convergent for $t\in (0,\infty)$.
Therefore, we now see that 
\begin{eqnarray*}
& &\int_0^\infty \left((t\,I+C)^{-1}-(t\,I+D)^{-1}\right)\ d\nu(t) =\\
 & &\int_0^\infty  (t\,I+C)^{-1}\,( D-C)\,(I+(t\,I+C)^{-1}(D-C))^{-1} \,(t\,I+C)^{-1}\ d\nu(t) =\\
 & &\int_0^\infty  (t\,I+C)^{-1}\,( D-C)\,  \sum_{n=0}^\infty (-1)^n \left((tI+C)^{-1}(D-C)\right)^n \,(t\,I+C)^{-1}\ d\nu(t) =\\
& & \sum_{n=0}^\infty (-1)^n  \int_0^\infty  \left((tI+C)^{-1}\,(D-C)\right)^{n+1} \,(t\,I+C)^{-1}\ d\nu(t) 
=
\sum_{n=1}^\infty   \tilde f_{n}(D-C)
\end{eqnarray*}
where, for $n\geq 1$ we let 
$$
\tilde f_{n}(D-C)=(-1)^{n+1}\, \int_0^\infty  \left((tI+C)^{-1}(D-C)\right)^{n} (t\,I+C)^{-1}\ d\nu(t)\in \fS_{sa}
$$ which is a $\|\,\cdot\, \|_\fS$-continuous homogeneous polynomial of degree $n$ (as a function of $D-C$) with values in $\fS_{sa}$, and the series is $\|\,\cdot\, \|_\fS$-absolutely convergent. Indeed, to show $\tilde f_{n}(D-C)\in \fS_{sa}$ we argue as follows: for $n\geq 1$, let $h_n:[0,\infty)\rightarrow \fS_{sa}$ be given by
$$
h_n(t)=\left((tI+C)^{-1}(D-C)\right)^{n} (t\,I+C)^{-1} \in\fS_{sa}\ , \ \ t\geq 0\,.
$$
In this case, $h(t)$ satisfies item \ref{item2} in 
Remark \ref{rem AvH1} since 
\begin{equation}\label{eq ecuac23}
\|h_n(t)\|_\fS\leq (t+\gamma_C)^{-2}\,(\gamma_C^{-1}\,\|D-C\|_\fS)^{n-1}\, \|D-C\|_\fS\ , \ \ t\geq 0\,,
\end{equation}
$$
(\gamma_C^{-1}\,\|D-C\|_\fS)^{n-1}\, \|D-C\|_\fS \ \int_0^\infty (t+\gamma_C)^{-2}\,\ d\nu(t)<\infty\,,
$$
where we have used that $\int_0^\infty (t^2+1)^{-1}\, d\nu(t)$ is finite. Furthermore, for $\delta>0$ we have that 
$$
\|h(t+\delta)-h(t)\|_\fS
\leq \delta\ 
(\gamma_C^{-1}\,\|D-C\|_\fS)^{n-1}\, \|D-C\|_\fS\ (t+\gamma_C)^{-3} \ \ \text{with} \ \ \int_0^\infty (t+\gamma_C)^{-3} \ d\nu(t)<\infty\,.
$$ Hence, $h(t)$ also satisfies item \ref{item3} in 
Remark \ref{rem AvH1}. By Lemma \ref{lem conv abs} we conclude that 
$$
\tilde f_n(D-C)=(-1)^{n+1}\ \int_0^\infty h(t) \ d\nu(t)\in\fS_{sa}$$ 
and, using Eq. \eqref{eq ecuac23}, we also get that 
\begin{equation}\label{ecuac12}
\|\tilde f_n(D-C)\|_{\fS}\leq (\gamma_C^{-1}\,\|D-C\|_\fS)^{n-1}\, \|D-C\|_\fS \ \int_0^\infty (t+\gamma_C)^{-2}\,\ d\nu(t)\,.
\end{equation}
Hence, $\tilde f_n(D-C)\in\fS_{sa}$ is a $\|\,\cdot\, \|_\fS$-continuous homogeneous polynomial of degree $n\geq 1$. 
Also, Eq. \eqref{ecuac12} shows that the series 
$$
\sum_{n=1}^\infty \tilde f_n(D-C)\in\fS_{sa}
$$
is $\|\,\cdot\, \|_\fS$-absolutely convergent for $\|D-C\|_\fS<\gamma_C$.
The previous facts together with Eq. \eqref{eq AvH1} show that for $D\in\cG\ell^+$ with $\|D-C\|_\fS<\gamma_C$ we have the local expansion 
$$
f(D)=f(C)+\beta(D-C)+\sum_{n=1}^{\infty} \tilde f_{n}(D-C)\,.
$$
 In particular, $f:(C+\fS)\cap \cG\ell^+\rightarrow f(C)+\fS_{sa}$ is a real analytic function.
\end{proof}

\begin{cor}\label{coro sobre AvH derivadas}
Consider the notation in Theorem \ref{teo sobre AvH}. For $D\in (C+\fS)\cap \cG\ell^+$ with 
$\|D-C\|_\fS<\gamma_C$ we have that: 
$$
\|f_1(D-C)\|_\fS\leq (\beta+\int_0^\infty (t+\gamma_C)^{-2}\ d\nu(t))\,\|D-C\|_\fS 
$$ and for $n\geq 2$,
\begin{eqnarray*}
\|f_n(D-C)\|_\fS&\leq &\int_0^\infty (t+\gamma_C)^{-(n+1)}\ d\nu(t)\ \|D-C\|_\fS^n \\
&\leq & \int_0^\infty (t+\gamma_C)^{-2}\ d\nu(t) \ (\|D-C\|_\fS\,\gamma_C^{-1})^{n-1} \,\|D-C\|_\fS
\end{eqnarray*}
\end{cor}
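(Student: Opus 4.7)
The plan is to verify the inequalities by direct estimation of the integrands appearing in the definitions of $f_n(D-C)$, then invoke Lemma \ref{lem conv abs} to pull norms inside the integral sign. Throughout I will use the standard bound $\|(tI+C)^{-1}\|\leq (t+\gamma_C)^{-1}$ for $t\geq 0$ (valid since $C\in\cG\ell^+$), along with the fact that $\|\,\cdot\,\|\leq \|\,\cdot\,\|_\fS$ and the property $\|ABD\|_\fS\leq \|A\|\,\|B\|_\fS\,\|D\|$ for $B\in\fS$.

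First I would handle $f_1$. By the triangle inequality applied to
$$f_1(D-C)=\beta(D-C)+\int_0^\infty (tI+C)^{-1}(D-C)(tI+C)^{-1}\,d\nu(t),$$
Lemma \ref{lem conv abs} (whose hypotheses were verified in Remark \ref{rem AvH1} and in the proof of Theorem \ref{teo sobre AvH}) gives
$$\|f_1(D-C)\|_\fS\leq \beta\,\|D-C\|_\fS+\int_0^\infty \|(tI+C)^{-1}(D-C)(tI+C)^{-1}\|_\fS\,d\nu(t),$$
and the inner norm is bounded by $(t+\gamma_C)^{-2}\|D-C\|_\fS$ by the submultiplicative property. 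Factoring out $\|D-C\|_\fS$ yields the first claim.

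For $n\geq 2$, the key observation is that in the product $\bigl((tI+C)^{-1}(D-C)\bigr)^n (tI+C)^{-1}$ one can single out one factor $(D-C)\in\fS_{sa}$, keep it in the ideal norm, and bound all remaining $(tI+C)^{-1}$ and $(D-C)$ factors in operator norm. Choosing e.g.\ the leftmost $(D-C)$ gives
$$\bigl\|\bigl((tI+C)^{-1}(D-C)\bigr)^n (tI+C)^{-1}\bigr\|_\fS\leq \|(tI+C)^{-1}\|\,\|D-C\|_\fS\,\bigl\|(tI+C)^{-1}\bigl((D-C)(tI+C)^{-1}\bigr)^{n-1}\bigr\|,$$
and bounding each of the $n$ remaining $(tI+C)^{-1}$ factors by $(t+\gamma_C)^{-1}$ and each of the $n-1$ remaining $(D-C)$ factors by $\|D-C\|\leq\|D-C\|_\fS$ gives the integrand bound $(t+\gamma_C)^{-(n+1)}\|D-C\|_\fS^n$. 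Invoking Lemma \ref{lem conv abs} once more delivers the first of the two inequalities for $n\geq 2$.

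Finally, the second inequality is a simple algebraic manipulation: write $(t+\gamma_C)^{-(n+1)}=(t+\gamma_C)^{-2}(t+\gamma_C)^{-(n-1)}$ and use $(t+\gamma_C)^{-(n-1)}\leq \gamma_C^{-(n-1)}$ (valid for $t\geq 0$ and $n\geq 2$) inside the integral. No real obstacle is expected; the whole corollary is essentially a bookkeeping consequence of Theorem \ref{teo sobre AvH} together with the submultiplicativity of $\|\,\cdot\,\|_\fS$ and the uniform bound on $\|(tI+C)^{-1}\|$. The only point requiring care is making sure that at each step Lemma \ref{lem conv abs} applies, i.e.\ that the integrand satisfies the conditions of Remark \ref{rem AvH1}; but this was already checked in the proof of Theorem \ref{teo sobre AvH} for these exact integrands.
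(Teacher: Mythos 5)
Your proposal is correct and is exactly the argument the paper intends: its one-line proof ("direct inspection of the proof of Theorem \ref{teo sobre AvH} and Lemma \ref{lem conv abs}") amounts to redoing the pointwise estimate on $h_n(t)=\bigl((tI+C)^{-1}(D-C)\bigr)^{n}(tI+C)^{-1}$ keeping a single factor of $D-C$ in the ideal norm, which is what you do. The only detail worth noting is the one you already flag: Lemma \ref{lem conv abs} is applied to the same integrands whose hypotheses were checked in the theorem's proof, and its conclusion is stated in terms of the actual $\|h_n(t)\|_\fS$, so the sharper pointwise bound $(t+\gamma_C)^{-(n+1)}\|D-C\|_\fS^n$ can be inserted directly.
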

\begin{proof}
The result follows from a direct inspection of the proof of Theorem \ref{teo sobre AvH} above and Lemma \ref{lem conv abs}.
\end{proof}

Integral representations of (Fr${\rm \acute{e}}$chet) derivatives of the functional calculus induced by an operator monotone function have been considered before (see \cite{Card,DMN,Sano}). We remark that our previous result does not only provide integral representations of the derivatives (of all orders) of the functional calculus by operator monotone functions, but it also provides theoretical insights about the relevance of these derivatives for the computation of approximations of the function with respect to symmetrically-normed operator ideals.

\begin{rem}
Consider the notation in Theorem \ref{teo sobre AvH}. The bounds in Corollary \ref{coro sobre AvH derivadas} allow to obtain simple estimates for the norm $\| \, \cdot \, \|_\fS$ of the error in the approximation (remainder)
$f(D)\approx f(C)+ \sum_{n=1}^mf_n(D-C)$, for $m\geq 1$. 
For example, if $f(\la)=\la^{1/2}$ for $\la\in[0,\infty)$ then
Corollary \ref{coro sobre AvH derivadas} implies that 
$$
\|f_n(D-C)\|_\fS \leq \frac{1}{\pi }\int_0^\infty (t+\gamma_C)^{-(n+1)} \,t^{1/2}\ dt\ \|D-C\|_\fS\ \ , \ \ n\geq 1\,,
$$ where we have used the integral representation in Eq. \eqref{eq rep raiz cuad} (so that $\beta=0$). 
It is not difficult to obtain upper bounds for the integral above that in turn allows to obtain upper bounds for the remainder.
Similar results have been obtained in \cite{DMN} for this particular choice of $f$. Nevertheless, notice that the fact that the corresponding functional calculus is real analytic with respect to symmetrically-normed ideals seems to be new even in this case.

There are other important operator monotone functions whose Fr${\rm \acute{e}}$chet derivatives have been considered in the literature. Our results also imply some relevant information about the properties of the corresponding functional calculus and Taylor approximations; we will consider these results elsewhere.
\end{rem}

\subsection{Geometric structure of positive perturbations}

We first give the following (algebraic) characterization of  perturbations of a fixed positive operator by symmetrically-normed ideals. 

\begin{lem}\label{lem sobre la orb posit}
Let $C \in \cC \cR^+$ and $\fS$ be a symmetrically-normed ideal. Then the following conditions are equivalent:
\begin{enumerate}
\item[i)] $D \in \cC \cR^+$ satisfies $D- C \in \fS$ and $[P_{N(D)}:P_{N(C)}]=0$;
\item[ii)] There exists $G \in \cG \ell_\fS$ such that $D=GC G^*$.
\end{enumerate}
\end{lem}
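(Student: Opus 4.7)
The implication (ii) $\Rightarrow$ (i) is the easier direction. If $D = GCG^*$ with $G\in\cG\ell_\fS$, write $G = I + K$ with $K\in\fS$, so that
$$
D - C = KCG^* + C K^* \in\fS\,.
$$
For the index condition, note that $G^*\in\cG\ell_\fS$ implies $(G^*)^{-1}\in\cG\ell_\fS$, so $D = G\,C\,((G^*)^{-1})^{-1}$ exhibits $D$ in the form of Remark \ref{charact close range} (ii); hence $[P_{N(D)}:P_{N(C)}]=0$.

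For (i) $\Rightarrow$ (ii), the plan is to proceed in two steps: first align the nullspaces with a unitary in $\cU_\fS$, then solve the problem in the positive invertible case by taking square roots. By Lemma \ref{ab y moore penrose} (applied to $D,C\in\cC\cR^+$ with $D-C\in\fS$), we have $P_{N(D)}-P_{N(C)}\in\fS$. Combined with the assumption $[P_{N(D)}:P_{N(C)}]=0$, Remark \ref{restricted groups} (i) gives a unitary $U\in\cU_\fS$ such that $U P_{N(C)} U^* = P_{N(D)}$. Setting $D_1 = U^* D U$, we get $N(D_1)=N(C)$, $D_1\in\cC\cR^+$, and a direct expansion using $U=I+K$ with $K\in\fS$ shows $D_1 - D\in\fS$, so $D_1-C\in\fS$.

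Now $\cH$ decomposes as $\cH_0\oplus N(C)$, where $\cH_0 = R(C) = N(C)^\perp$. With respect to this decomposition, both $C$ and $D_1$ vanish on $N(C)$ and restrict to positive invertible operators $c := C|_{\cH_0}$, $d := D_1|_{\cH_0}$ on $\cH_0$, with $d-c\in\fS(\cH_0)$. The key input is now Theorem \ref{teo sobre AvH} (or the preliminary Proposition \ref{pro AvH1}) applied to the operator monotone function $\lambda\mapsto\lambda^{1/2}$: it yields $d^{1/2}-c^{1/2}\in\fS(\cH_0)$. Define
$$
g_0 := d^{1/2}\, c^{-1/2}\,\in \cB(\cH_0)\,,
$$
which is invertible with inverse $c^{1/2} d^{-1/2}$, and satisfies $g_0\, c\, g_0^* = d$. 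Moreover,
$$
g_0 - I = (d^{1/2}-c^{1/2})\,c^{-1/2}\in \fS(\cH_0)\,,
$$
so $g_0\in \cG\ell_{\fS(\cH_0)}$.

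Finally, extend trivially by setting $G' = g_0\oplus I_{N(C)}\in\cG\ell_\fS$. Then $G' C G'^* = D_1$, since the decomposition $\cH=\cH_0\oplus N(C)$ reduces both $C$ and $D_1$. Setting $G := U G'\in\cG\ell_\fS$ gives
$$
G C G^* = U(G'CG'^*)U^* = U D_1 U^* = D\,,
$$
as required. The main technical obstacle is the step $d-c\in\fS(\cH_0) \Longrightarrow d^{1/2}-c^{1/2}\in\fS(\cH_0)$, which is precisely where the Ando--van Hemmen machinery developed in Proposition \ref{pro AvH1} and Theorem \ref{teo sobre AvH} comes in; the rest is a matter of careful bookkeeping with essential codimension and restriction to the range.
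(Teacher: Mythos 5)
Your proof is correct and follows essentially the same route as the paper's: for (i)$\Rightarrow$(ii) both arguments align the nullspaces with a unitary in $\cU_\fS$ obtained from the essential codimension hypothesis and then reduce to the square-root perturbation result of Proposition \ref{pro AvH1}, your explicit factor $g_0=d^{1/2}c^{-1/2}$ on $R(C)$ being exactly what the paper produces by applying Remark \ref{charact close range}~$i)$ to $C^{1/2}$ and $UD^{1/2}U^*$. For (ii)$\Rightarrow$(i), writing $D=GC((G^*)^{-1})^{-1}$ and citing Remark \ref{charact close range}~$ii)$ is a slightly more direct variant of the paper's argument via Lemma \ref{smooth unitaries} and Remark \ref{restricted groups}~$i)$, but the content is the same.
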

\begin{proof}
\noi $i) \to ii)$   From Remark \ref{ab y moore penrose} we know that $D- C \in \fS$ implies $P_{N(D)}- P_{N(C)} \in \fS$. By the assumption on the essential codimension of these projections and Remark \ref{restricted groups}, we have
$UP_{N(D)}U^*=P_{N(C)}$ for some $U \in \cU_\fS$. Notice that $UDU^*- C \in \fS$ and  $N(UDU^*)=N(C)$; by Corollary \ref{hay cont en p0} we conclude that $UD^{1/2}U^* - C^{1/2} \in \fS$. 
 From the last condition,
and noting that $N(UD^{1/2}U^*)=N(UDU^*)=N(C)=N(C^{1/2})$, it follows  that there exists $G \in \cG \ell_\fS$ such that $GC^{1/2}=UD^{1/2}U^*$ (see Remark \ref{charact close range}). Hence, we find that $(U^*G)C(U^*G)^*=D$, where $U^*G \in \cG \ell_\fS$. 

\medskip

\noi $ii) \to i)$ Clearly, we have $D \in \cC \cR^+$. Using that $G \in \cG \ell_\fS$, it follows that $D-C=GCG^*-C \in \fS$. From $D^*=GCG^*$ we have  $G(N(C)^\perp)=G(R(C^*))=R(D^*)=N(D)^\perp$. By Lemma \ref{smooth unitaries} 
 there is a unitary $U_G \in \cU_\fS$ such that $U_G P_{N(C)} U_G^*=P_{N(D)}$. Then, again by Remark \ref{restricted groups}, we get $[P_{N(D)}: P_{N(C)}]=0$
\end{proof}

For an operator $C \in \cC \cR^+$, since $R(C)^\perp=N(C)$, the three dimensions used in Notation \ref{nota conv inf} reduce to two dimensions. That is, we have 
$\mathbb{J}_C=\{  k \in \Z  :  \, -\dim(N(C)) \leq k \leq \dim(N(C)^\perp) \}$. The following result is an analog of Theorem \ref{index decomp connected} for positive perturbations.

\begin{teo}\label{estru pk real analitica}
Let $C \in \cC \cR^+$ and $\fS$ be a symmetrically-normed ideal. 
Then 
$$
\cC \cR^+ \cap (C + \fS)=\bigcup_{k \in \mathbb{J}_C} \cP_k(C)
$$
where 
$$
\cP_k(C) =\{  B \in \cC \cR^+   :   B-C \in \fS, \, [P_{N(B)}: P_{N(C)}]=k   \}.
$$ 
The following assertions hold:
\begin{itemize}
\item[i)] The action $\cG \ell_\fS \times \cC \cR^+ \cap (C + \fS) \to   \cC \cR^+ \cap (C + \fS)$,  $G \cdot B=GBG^*$ restricted to $\cP_k(C)$ is well defined and transitive. In other words, 
$\cP_k(C)=\{ GB^{(k)} G^*   : G \in \cG \ell_\fS\}$ for any $B^{(k)} \in \cP_k(C)$.
\item[ii)] $\cP_k(C)$ is a real analytic homogeneous space of $\cG \ell_\fS$ and a  submanifold of $C + \fS$, whose tangent space at $B \in \cP_k(C)$ is 
$(T\cP_k(C))_B=\{ XB + BX^*   :  X \in  \fS \}\subset \fS=(T (C + \fS) )_B$. 
\item[iii)] $\cP_k(C)$ endowed with the previous differential structure is also a submanifold of $\cC_k(C)$.
\end{itemize}
\end{teo}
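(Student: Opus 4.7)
My strategy is to mirror the proof of Theorem \ref{teo ck es esp hom}, replacing the bilateral action $(G,K)\cdot B = GBK^{-1}$ by the congruence action $G\cdot B = GBG^*$, and making systematic use of the real analyticity of the square root (Theorem \ref{teo sobre AvH}) and of the projection-moving unitaries (Lemma \ref{smooth unitaries}). The union decomposition is immediate from Lemma \ref{ab y moore penrose} (which makes $[P_{N(B)}:P_{N(C)}]\in\Z$ well defined for $B\in\cC\cR^+\cap (C+\fS)$) together with a rank-adjustment construction within $\cC\cR^+$, along the lines of Theorem \ref{index decomp connected}, showing every $k\in\mathbb{J}_C$ is realized. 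For (i), transitivity is immediate from Lemma \ref{lem sobre la orb posit}: given $B,B'\in\cP_k(C)$ we have $B-B'\in\fS$ and, by additivity of the essential codimension, $[P_{N(B)}:P_{N(B')}]=k+(-k)=0$, so some $G\in\cG\ell_\fS$ satisfies $B=GB'G^*$; well-definedness of the action on $\cP_k(C)$ uses the same additivity.

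For (ii), I would work with the orbit map $\pi_k^+:\cG\ell_\fS\to\cP_k(C)$, $\pi_k^+(G)=GB^{(k)}G^*$, and run the same three-step scheme as in Theorem \ref{teo ck es esp hom}. The first step is to show that the isotropy group $\cG_{B^{(k)}}=\{G\in\cG\ell_\fS:GB^{(k)}G^*=B^{(k)}\}$ is a Banach-Lie subgroup via \cite[Prop. 8.12]{Up85}: its Lie algebra $\{X\in\fS:XB^{(k)}+B^{(k)}X^*=0\}$ is closed and complemented when written in block form with respect to $\cH=R(P)\oplus N(P)$ for $P=P_{R(B^{(k)})}=P_{N(B^{(k)})^\perp}$, and the exponential property follows by the same logarithm argument as in Lemma \ref{lie subg}, applied now to the single identity $GB^{(k)}G^*=B^{(k)}$.

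The critical second step is to construct a real analytic local cross section for $\pi_k^+$ at $B^{(k)}$, mimicking the existence argument in the proof of Lemma \ref{lem sobre la orb posit}. For $B$ close to $B^{(k)}$ in $\fS$ norm, I would first use Lemma \ref{smooth unitaries} to produce $U(B)\in\cU_\fS$ depending real-analytically on $B$ such that $U(B)P_{N(B^{(k)})}U(B)^*=P_{N(B)}$; then the compression of $U(B)^*BU(B)$ to $R(B^{(k)})$ is a positive invertible perturbation of $B^{(k)}|_{R(B^{(k)})}$ by an element of $\fS$, so Theorem \ref{teo sobre AvH} gives that $(U(B)^*BU(B))^{1/2}=U(B)^*B^{1/2}U(B)$ is real analytic in $B$; next, since this square root and $(B^{(k)})^{1/2}$ share nullspace and range, Remark \ref{charact close range} (or an appropriate adaptation of the cross section in Lemma \ref{local cross sections}) provides $G_0(B)\in\cG\ell_\fS$ with $G_0(B)(B^{(k)})^{1/2}=U(B)^*B^{1/2}U(B)$ depending real-analytically on $B$; finally $G(B):=U(B)G_0(B)$ satisfies $G(B)B^{(k)}G(B)^*=B$ by the computation at the end of the proof of Lemma \ref{lem sobre la orb posit}. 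With this section in hand, \cite[Thm. 8.19]{Up85} yields the homogeneous space structure, and the tangent space at $B^{(k)}$ is identified as the image of $T_I\pi_k^+$, namely $\{XB^{(k)}+B^{(k)}X^*:X\in\fS\}$; in block form with respect to the decomposition above, this is the subspace of self-adjoint matrices of the shape $\begin{pmatrix} Z_{11} & Z_{12} \\ Z_{12}^* & 0 \end{pmatrix}$ in $\fS_{sa}$, which is closed and complemented in $\fS_{sa}$ (hence in $\fS$) by the block $(P^\perp\fS P^\perp)_{sa}$. Applying \cite[Prop. 8.7]{Up85} as in Theorem \ref{teo ck es esp hom} concludes that $\cP_k(C)$ is a submanifold of $C+\fS$ with the two differential structures coinciding.

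For (iii), the inclusion $\iota:\cP_k(C)\hookrightarrow\cC_k(C)$ intertwines the two orbit maps via the real analytic group homomorphism $\varphi:\cG\ell_\fS\to\cG\ell_\fS\times\cG\ell_\fS$, $\varphi(G)=(G,(G^*)^{-1})$, in the sense that $\iota\circ\pi_k^+=\pi_k\circ\varphi$; hence $\iota$ is real analytic. At $B^{(k)}$ the tangent map embeds $(T\cP_k(C))_{B^{(k)}}=\{XB^{(k)}+B^{(k)}X^*\}$ into $(T\cC_k(C))_{B^{(k)}}=\{YB^{(k)}-B^{(k)}Z\}$, and in block form the latter is the set of all matrices $\begin{pmatrix} * & * \\ * & 0 \end{pmatrix}$ while the former is its self-adjoint part; a natural closed complement within $(T\cC_k(C))_{B^{(k)}}$ consists of the antihermitian elements (those with $Z_{11}\in i(P\fS P)_{sa}$ and $Z_{21}=-Z_{12}^*$). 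Since the topologies on both spaces coincide with the one inherited from $C+\fS$, \cite[Prop. 8.7]{Up85} again yields the submanifold claim. I expect the main obstacle to lie in the real analytic cross section of step two of (ii): the composition of the unitary section of Lemma \ref{smooth unitaries}, the square root of Theorem \ref{teo sobre AvH} and the closed-range section of Lemma \ref{local cross sections} requires careful bookkeeping of domains and fibers, especially to ensure that $U(B)^*BU(B)$ lies in the regime where Theorem \ref{teo sobre AvH} applies real-analytically to the compressed positive invertible operator on $R(B^{(k)})$.
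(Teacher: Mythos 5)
Your overall strategy coincides with the paper's: the same decomposition argument, the same use of Lemma \ref{lem sobre la orb posit} and additivity of the essential codimension for (i), the same isotropy-subgroup/quotient scheme for (ii) (including the block computation of $\mathfrak{g}_{B^{(k)}}$ and the logarithm argument from Lemma \ref{lie subg}), and an immersion argument for (iii). Your identification of the tangent space as the self-adjoint matrices with vanishing $(2,2)$-block is correct (every $W=W^*\in P\fS P$ arises as $XC+CX^*$ with $X=\tfrac12 WC^\dagger\in\fS$), and your complement of $(T\cP_k(C))_{B^{(k)}}$ inside $(T\cC_k(C))_{B^{(k)}}$ for (iii) — the antihermitian elements, with $Z_{11}$ antihermitian and $Z_{21}=-Z_{12}^*$ — is a genuine closed complement.

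The one step that does not work as written is the local cross section in (ii). You invoke Lemma \ref{smooth unitaries} to produce $U(B)\in\cU_\fS$ with $U(B)P_{N(B^{(k)})}U(B)^*=P_{N(B)}$ depending real-analytically on $B$. But that lemma takes as input an element $T\in\cG\ell_\fS$ and outputs $U_T$ with $P_{T(\cS)}=U_TP_\cS U_T^*$; to feed it you would already need an invertible operator carrying $N(B^{(k)})^\perp$ onto $N(B)^\perp$ as a function of $B$ — which is essentially the cross section you are trying to construct, so the argument is circular. Likewise, Remark \ref{charact close range} only asserts the \emph{existence} of $G_0$ with $G_0(B^{(k)})^{1/2}=U(B)^*B^{1/2}U(B)$; it gives no analytic (or even continuous) dependence on $B$, so an explicit formula is required. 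Both issues are repairable, and the paper's proof shows how: it writes the explicit section $\sigma(B)=B^{1/2}S(C^\dagger)^{1/2}+(I-Q)S(I-P)$, where $P=P_{R(C)}$, $Q=P_{R(B)}=BB^\dagger$ and $S=QP+(I-Q)(I-P)$ is built directly from the pair of projections (as in Remark \ref{section grass}) rather than from Lemma \ref{smooth unitaries}, and it only proves this section is \emph{continuous} — via Theorem \ref{pseudo conv} for $Q=BB^\dagger$ and Corollary \ref{teo sobre AvH2} for $B\mapsto B^{1/2}$ — which is all that is needed, since the real analytic structure on $\cP_k(C)$ is transported from the quotient $\cG\ell_\fS/\cG_{B^{(k)}}$ once the quotient topology is identified with the $d_\fS$-topology. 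So either drop the real-analyticity requirement on your section or replace its construction by an explicit formula of this kind.
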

\begin{proof}
We write $\cP_k=\cP_k(C)$ for short. Analogously to the proof of Theorem \ref{index decomp connected} one can see  that $\cC \cR^+ \cap (C + \fS)$ is decomposed as the above disjoint union, and each $\cP_k \neq \emptyset$ for 
$k \in \mathbb{J}_C$. 

\medskip

\noi $i)$ This follows by elementary properties of the essential codimension.

\medskip

\noi $ii)$ We  only treat the case $k=0$ and take $B^{(0)}=C$. We first show that the isotropy of the action at $C$ given by $\cG_C:=\{  G \in \cG \ell_\fS  :  GCG^*=C  \}$ is a Banach-Lie subgroup of $\cG:=\cG \ell_\fS$. Using a matrix representation with respect to $\cH=R(C)\oplus R(C)^\perp$ we rewrite this group as
$$
\cG_C=\left\{ \begin{pmatrix}  G_{11}   &   G_{12}   \\    0    &   G_{22}  \end{pmatrix}  \in \cG \ell_\fS :   G_{11}C=C(G_{11}^*)^{-1}   \right\},
$$
which has its Lie algebra given by
\begin{align*}
\mathfrak{g}_C & = \{ X \in \fS  :   XC=-CX^*    \} \\
& = \left\{ \begin{pmatrix}  X_{11}   &   X_{12}   \\    0    &   X_{22}  \end{pmatrix}  \in \fS :   X_{11}C=-CX_{11}^*   \right\}\,.
\end{align*}
For every neighborhood $\cT$ of $0  \in \mathfrak{h}$, one can  show that $\exp_\cG(\cT)$ is a neighborhood of $I \in \cG_C$ by using the logarithm series as in Lemma \ref{lie subg}.  To see that 
 $\mathfrak{g}_C$ is complemented in $\mathfrak{g}:=\fS$, we put $P=P_{R(C)}$ and define the closed real subspaces
 $$
 \mathfrak{n}^{\pm}=\{ X \in P\fS P  :  X^*=\pm C^\dagger X C  \}.
 $$
Since every $X \in P \fS P$ can be written as $X=X_+  + X_-$, where $X_+=\frac{1}{2}(X^* + C XC^\dagger)$ and $X_-=\frac{1}{2}(X^* - CXC^\dagger)$, we have 
$P \fS P=\mathfrak{n}^+ \oplus \mathfrak{n}^-$. Therefore, 
$$
\mathfrak{m}=\left\{   \begin{pmatrix}    X_{11}  &   0  \\   Y   & 0  \end{pmatrix} :   X_{11} \in \mathfrak{n}^+ , \, Y \in P^\perp \fS P  \right\}.
$$ 
is a closed supplement of $\mathfrak{g}_C$ in $\fS$. This shows that $\cG_C$ is a Banach-Lie subgroup of $\cG $, and consequently, $\cP_0 \simeq \cG  / \cG_C$ inherits the structure of real analytic homogeneous space of $\cG $ from the quotient space $\cG  / \cG_C$. In particular, the tangent space $(T \cP_0)_C$ can be identified with the quotient space $\mathfrak{g}/\mathfrak{g}_C=\{ XC + CX^*  : X \in \fS  \}$. 

Now we show that $\cP_0$ with its homogeneous structure is also a real submanifold of $C + \fS$. We proceed as in Theorem \ref{estruct variedad y esp homog}.  Notice that the map $\pi_0: \cG \ell_\fS \to \cP_0$, $\pi_0(G)=GCG^*$ admits continuous local cross sections. Indeed, let $B\in \cP_0$,
$P=P_{R(C)}$, $Q=P_{R(B)}$; notice that $Q=Q(B)$ is a continuous function of $B\in \cP_0$ since $Q=BB^\dagger$
and the continuity of the Moore-Penrose inverse on $\cC_0(C)$ shown in Theorem \ref{pseudo conv} (notice that $\cP_0\subset \cC_0(C)$ with the same distance function $d_\fS$). 
 Set $S=QP + (I-Q)(I-P)$ and notice that $SP=QS$ so then $S(I-P)=(I-Q)S$.
It can be checked that  $S$ is invertible when $\|C-B\|_\fS<\gamma$, for sufficiently small $\gamma>0$; in this case, $SPS^{-1}=Q$ and $S(I-P)S^{-1}=(I-Q)$. Then, we define the map
$$
\sigma(B)=B^{1/2} S (C^\dagger)^{1/2} + (I-Q)S(I-P).
$$
If follows by construction that $\sigma(B) \in \cG \ell_\fS$ is such that $ \pi_0(\sigma(B))=B$ for $B\in\cP_0$ such that $\|C-B\|_\fS<\gamma$. 
Moreover, 
$\sigma$ is continuous by the continuity of the square root with respect to the metric $d_\fS$ (see Corollary \ref{teo sobre AvH2}).

It remains to show that the tangent space $(T\cP_0)_C=\{ XC + CX^*  : X \in \fS  \}$ is a closed  complemented subspace in $\fS$. We may use again the above matrix representation, 
$$
XC+CX^*=\begin{pmatrix}  X_{11}C + CX_{11}^*    &    C  X_{21}^*   \\   X_{21}C   &  0   \end{pmatrix}\,.
$$
From this expression, it suffices to show that $\Sigma=\{ XC + CX^*  :   X \in P \fS P   \}$ is closed and complemented in $P \fS P$.
Consider the bounded and invertible operator  $\cR_{T}: P \fS P \to P \fS P$, 
$\cR_{T}(Y)=YT$, for fixed $T \in \cB(\cH)$ with $R(T)=N(T)^\perp=R(C)$. Since $\cR_{C^\dagger}(\Sigma)=\mathfrak{n}^+$ we conclude that $\Sigma =\cR_{C}(\mathfrak{n}^+)$ is closed and such that 
$\Sigma \oplus \cR_{C}(\mathfrak{n}^-)=P\fS P$. This completes the proof. 

\medskip

\noi $iii)$ Again, we only consider the case $k=0$ and set $\cC_0=\cC_0(C)$. Notice that since both $\cP_0$ and $\cC_0$ are submanifolds of $C+\fS$, so their topologies coincide and the inclusion map  $\iota:\cP_0\to \cC_0$ is real analytic. We now show that this map is an immersion. For it is enough to see  that 
 the image under the tangent map $T_C\iota[(T\cP_0)_C]=(T\cP_0)_C$ is a closed complemented subspace of $(T\cC_0)_C$. 
By inspection of the proof of Theorem \ref{estruct variedad y esp homog} (see Eq. \eqref{eq tang cc0}) we have that 
$$
(T\cC_0)_C =\left\{    \begin{pmatrix}     Z_{11}   &   Z_{12}  \\   Z_{21}   &   0   \end{pmatrix}: Z_{11} \in P \FS P, \, Z_{12} \in P \FS P^\perp, \, Z_{21}  \in  P^\perp \FS P   \right\}\,,
$$ where $P=P_{R(C)}=P_{N(C)^\perp}$ as before. Thus, from the proof of item $ii)$ above we see that 
$$
\left\{    \begin{pmatrix}     Z_{11}   &   0  \\   0   &   0   \end{pmatrix}: Z_{11} \in \cR_{C}(\mathfrak{n}^-)
  \right\}\subset (T\cC_0)_C 
$$ is a complement for $(T\cP_0)_C$ inside $(T\cC_0)_C$. Thus, by \cite[Prop. 8.7]{Up85} we have that $\cP_0\subset \cC_0$ is a real analytic submanifold.
\end{proof}

\begin{rem}
We observe that in the context of $C^*$-algebras the constructions of continuous local cross sections for the action on congruence orbits can be given when $\mathrm{dim}(N(C))< \infty$ or $\mathrm{dim}(R(C))< \infty$  (see \cite[Thm. 3.4]{CMS04}). The fact that the essential codimension is fixed is the key 
for the construction of continuous local cross sections in our previous proof. 
\end{rem}

Now that we have a manifold structure for $\cP_k(C)$ we can complement 
Theorem \ref{teo sobre AvH} by showing the smoothness of the functional calculus by operator monotone functions for the more general domain $\cP_k(C)$. This result will be needed to prove Theorem \ref{teo hay fibras} below.

\begin{cor}\label{teo sobre AvH2}
Let $f:[0,\infty)\rightarrow \R$ be an operator monotone function on $[0,\infty)$, let $C\in\cC\cR^+$ and $\fS$ be a symmetrically-normed ideal. 
Then, 
$f:\cP_k(C) \rightarrow f(C) + \fS_{sa}$
is a  real analytic function.
\end{cor}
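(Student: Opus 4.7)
The plan is to reduce the statement to Theorem \ref{teo sobre AvH} via a real analytic unitarization that sends the variable nullspace $N(B)$ to the fixed nullspace $N(C)$. First I would observe that one may assume $k=0$: for a fixed $B^{(k)} \in \cP_k(C)$, Theorem \ref{estru pk real analitica} gives $\cP_k(C) = \cP_0(B^{(k)})$ as real analytic manifolds, and Corollary \ref{hay cont en p0} ensures $f(B^{(k)}) - f(C) \in \fS$, so the affine codomain $f(C) + \fS_{sa}$ coincides with $f(B^{(k)}) + \fS_{sa}$. By transitivity of the real analytic $\cG\ell_\fS$-action (Theorem \ref{estru pk real analitica}), it then suffices to establish real analyticity in an open neighborhood of $C$ in $\cP_0(C)$.

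Since the orbit map $\pi_0: \cG\ell_\fS \to \cP_0(C)$, $\pi_0(G) = G C G^*$, is a real analytic submersion by Theorem \ref{estru pk real analitica}, it admits real analytic local cross sections (\cite[Corol.~8.3]{Up85}); choose one $s: \cV \to \cG\ell_\fS$ with $s(B) C s(B)^* = B$ on an open neighborhood $\cV$ of $C$. Using that $R(B) = s(B)(R(C))$, Lemma \ref{smooth unitaries} applied to the subspace $\cS = R(C)$ produces a real analytic map $G \mapsto U_G$ from $\cG\ell_\fS$ to $\cU_\fS$ with $U_G P_{R(C)} U_G^* = P_{G(R(C))}$. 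Setting $V_B := U_{s(B)}^*$ yields a real analytic map $\cV \to \cU_\fS$ satisfying $V_B P_{N(B)} V_B^* = P_{N(C)}$. Then $\tilde B := V_B B V_B^* \in \cB(\cH)^+$ has $N(\tilde B) = N(C)$ and $\tilde B - C \in \fS_{sa}$, so the restriction $\tilde B|_{R(C)}$ is a positive invertible operator on the Hilbert space $R(C)$ differing from $C|_{R(C)}$ by an element of the ideal $\fS(R(C))_{sa}$ (for $B$ close enough to $C$).

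With the unitarization in hand, I would apply Theorem \ref{teo sobre AvH} on $R(C)$ with base point $C|_{R(C)} \in \cG\ell^+(R(C))$ to conclude that $\tilde B|_{R(C)} \mapsto f(\tilde B|_{R(C)})$ is a real analytic map into $f(C|_{R(C)}) + \fS(R(C))_{sa}$. Lifting to $\cH$ via the inclusion $\iota: R(C) \hookrightarrow \cH$ and using the decomposition $f(\tilde B) = \iota\, f(\tilde B|_{R(C)})\, \iota^* + f(0)\, P_{N(C)}$, the map $B \mapsto f(\tilde B) \in f(C) + \fS_{sa}$ becomes a composition of real analytic maps. Finally, since unitary conjugation commutes with the continuous functional calculus, $f(B) = V_B^*\, f(\tilde B)\, V_B$, so $B \mapsto f(B)$ is real analytic on $\cV$, and the reduction above yields the conclusion on all of $\cP_k(C)$.

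The main obstacle is the construction of the real analytic unitarization $B \mapsto V_B$; this is the step where the interplay between Theorem \ref{estru pk real analitica} (producing a real analytic section of $\pi_0$) and Lemma \ref{smooth unitaries} (producing a real analytic family of unitaries realizing the congruence of projections) is essential. A minor technical check is that $\tilde B|_{R(C)}$ lies in the open domain of validity of Theorem \ref{teo sobre AvH}, which holds because $\|\tilde B|_{R(C)} - C|_{R(C)}\|_{\fS(R(C))}$ is controlled by $\|\tilde B - C\|_\fS$, and both $V_B$ and $B \mapsto B$ are continuous at $C$ in the $\fS$-metric.
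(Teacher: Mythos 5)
Your proposal is correct and follows essentially the same route as the paper: reduce to $k=0$, use Lemma \ref{smooth unitaries} to produce a real analytic family of unitaries in $\cU_\fS$ conjugating the variable range onto the fixed $R(C)$, apply Theorem \ref{teo sobre AvH} to the invertible positive compression on $R(C)$, and conjugate back. The only (cosmetic) difference is that you precompose with a real analytic local cross section of $\pi_0$, whereas the paper shows directly that $G\mapsto f(GCG^*)$ is real analytic and invokes the submersion property of $\pi_0$; these are interchangeable.
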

\begin{proof}
Without loss of generality we can assume that $k=0$ and check that $f$ is real analytic in a neighborhood of $C\in\cP_0=\cP_0(C)$.
By Theorem \ref{estru pk real analitica} we get that the structure of $\cP_0$ as a submanifold of $C+\fS$ coincides with its structure as an homogeneous space of $\cG\ell$.
 Hence, it suffices to show that the map $\cG\ell_\fS\ni G\mapsto f(GCG^*)$ is real analytic. By Lemma \ref{smooth unitaries}  
there exists a real analytic function $\cG\ell_\fS\ni G\mapsto U_G\in \cU_\fS$ such that $U_G (R(C))=G(R(C))$; thus, the map 
$\cG\ell_\fS\ni G\mapsto U_G^*(GCG^*)U_G$ is real analytic and such that $R(U_G^*(GCG^*)U_G)=R(C)$, for $G\in\cG\ell_\fS$. Hence, we can consider the matrix representation of $U_G^*(GCG^*)U_G$ with respect to the  decomposition $\cH=R(C)\oplus N(C)$ given by
$$
\begin{pmatrix}
U_G^*(GCG^*)U_G|_{R(C)} & 0 \\ 
0 & 0
\end{pmatrix}\,.
$$
Moreover,  $U_G^*(GCG^*)U_G|_{R(C)}\in (C|_{R(C)}+\fS)\cap \cG\ell(R(C))^+$, for $G\in \cG\ell_\fS$. Therefore, by Theorem \ref{teo sobre AvH} we get that 
$f(U_G^*(GCG^*)U_G|_{R(C)}) \in f(C|_{R(C)})+\fS_{sa}$
and that the map
$$
\cG\ell_\fS\ni G\mapsto f(U_G^*(GCG^*)U_G)=
\begin{pmatrix}
f(U_G^*(GCG^*)U_G|_{R(C)}) & 0 \\ 
0 & f(0)
\end{pmatrix}\in f(C) + \fS_{sa} $$
is real analytic, where we have also considered the decomposition $\cH=R(C)\oplus N(C)$ above.
Finally, notice that $\cG\ell_\fS\ni G\mapsto f(GCG^*)= U_G\,f(U_G^*(GCG^*)U_G)\, U_G^*$ is real analytic, because it is the composition of real analytic maps.
\end{proof}

\subsection{Operator modulus and polar factor as real analytic fiber bundles}\label{op maodulus and polar factor}

We apply the previous results 
 to study the maps given by the polar factor and the operator modulus, defined on $\cC_k(A)$, for $A\in\cC\cR$. We remark the well-known fact that the essential codimension is also helpful for the analysis of perturbations of partial isometries by symmetrically-normed ideals;
we collect several results from the literature in the following remark. 

\begin{rem}\label{on connected comp part isom}
For a fixed $V \in \cP \cI\subset \cC\cR$ we consider below the  set $\mathbb{J}_{V}$ previously defined (Notation \ref{nota conv inf}). The essential codimension can be used to write $\cP \cI \cap (V + \fS)$ as the union of connected components:
$$
\cP \cI \cap (V + \fS)=\bigcup_{k \in \mathbb{J}_{V}} \cV_k(V)
$$
where $\cV_k(V)=\{ X \in \cP\cI : X- V \in \fS, \, [P_{N(X)} : P_{N(V)} ]=k  \}\neq \emptyset$. The action  
$
(\cU_\fS \times \cU_\fS) \times \cP \cI \to \cP \cI$, $(U,W) \cdot V=UVW^*$ 
leaves invariant each $\cV_k(V)$, and  moreover, it holds
$$
\cV_k (V)=\{ UV^{(k)}W^*   :    U,W \in  \cU_\fS  \},
$$
where $V^{(k)}$ is any partial isometry in $\cV_k(V)$. These facts were proved in \cite[Prop 3.5]{EC19} for the ideal of Hilbert-Schmidt operators. The same proofs can be carried out for arbitrary symmetrically-normed ideals. In addition, we recall that $\cV_k(V)$ are real analytic  homogeneous spaces of the group $\cU_\fS \times \cU_\fS$, and submanifolds of $V+ \fS$ (see \cite{EC10}).
\end{rem}

\begin{lem}\label{lem sobre a y v}
Let $A \in \cC \cR$ with polar  decomposition $A=V_A|A|$, $k \in \mathbb{J}_A$ and $\fS$ be a symmetrically-normed ideal. The following assertions hold:
\begin{enumerate}
\item[i)] $\alpha: \cC_k(A) \to \cP_k(|A|)$, $\alpha(B)=|B|$, is well defined and surjective.
\item[ii)] $v: \cC_k(A) \to \cV_k(V_A)$, $v(B)=V_B$, is well defined and surjective.
\end{enumerate}
\end{lem}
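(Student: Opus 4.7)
The plan is to split each item into (well-definedness of the image, surjectivity). The well-definedness of $\alpha$ will come down to the square root being operator monotone and preserving the ideal structure on a fixed $\cP_k(\cdot)$ via Corollary \ref{teo sobre AvH2}; the well-definedness of $v$ will then follow from the identity $V_B = B|B|^\dagger$ together with Lemma \ref{ab y moore penrose}. The surjectivity of both maps proceeds by a common template: pick any element of the target fiber, then use Remark \ref{restricted groups} $i)$ to align nullspace projections via a unitary in $\cU_\fS$.

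For the well-definedness of $\alpha$, I take $B\in\cC_k(A)$ and observe that $B^*B - A^*A = (B^*-A^*)B + A^*(B-A)\in\fS$ with $[P_{N(B^*B)}:P_{N(A^*A)}] = [P_{N(B)}:P_{N(A)}] = k$, so $B^*B\in \cP_k(A^*A)$. Applying Corollary \ref{teo sobre AvH2} to $f(\lambda)=\lambda^{1/2}$ on $\cP_k(A^*A)$ yields $|B|=f(B^*B)\in |A|+\fS_{sa}$; combined with $N(|B|)=N(B)$ and $N(|A|)=N(A)$, this gives $|B|\in\cP_k(|A|)$. For $v$, I use $V_B=B|B|^\dagger$ and $V_A=A|A|^\dagger$: Lemma \ref{ab y moore penrose} applied to $|B|-|A|\in\fS$ gives $|B|^\dagger - |A|^\dagger\in\fS$, hence
$$V_B - V_A = (B-A)|B|^\dagger + A(|B|^\dagger - |A|^\dagger)\in\fS,$$
and $[P_{N(V_B)}:P_{N(V_A)}] = [P_{N(B)}:P_{N(A)}] = k$ confirms $V_B\in\cV_k(V_A)$.

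For the surjectivity of $\alpha$, given $D\in\cP_k(|A|)$, I will produce a partial isometry $W\in\cV_k(V_A)$ with $N(W)=N(D)$ and set $B:=WD$. This will do the job, since then $|B|^2 = DW^*WD = DP_{N(D)^\perp}D = D^2$ gives $|B|=D$, while $B-A = (W-V_A)D + V_A(D-|A|)\in\fS$ and the essential codimension condition is inherited from $D$. To construct $W$, fix any $W_0\in\cV_k(V_A)$ (non-empty by Remark \ref{on connected comp part isom}), set $Q_0=W_0^*W_0$, $Q=P_{N(D)^\perp}$ and $P=P_{N(A)^\perp}$. Lemma \ref{ab y moore penrose} gives $Q_0-P,\,Q-P\in\fS$, and the essential codimension rules yield $[Q_0:P] = -k = [Q:P]$, whence $[Q_0:Q]=0$. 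Remark \ref{restricted groups} $i)$ then supplies $U\in\cU_\fS$ with $UQ_0U^*=Q$, and $W:=W_0U^*$ is the required partial isometry in $\cV_k(V_A)$.

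For the surjectivity of $v$, given $X\in\cV_k(V_A)$, a dual construction manufactures $D\in\cP_k(|A|)$ with $N(D)=N(X)$: pick any $D_0\in\cP_k(|A|)$ (non-empty by Theorem \ref{estru pk real analitica}), verify $[P_{N(D_0)}:P_{N(X)}]=0$ by the same bookkeeping, use Remark \ref{restricted groups} $i)$ to obtain $U\in\cU_\fS$ with $UP_{N(D_0)}U^*=P_{N(X)}$, and set $D:=UD_0U^*$. Then $B:=XD$ satisfies $|B|=D$ and $V_B=XP_{N(D)^\perp}=X$, and a direct check shows $B\in\cC_k(A)$. The main obstacle is the case $k\neq 0$, where the naive identifications ($V_B=V_A$, or $N(D)=N(|A|)$) fail and Remark \ref{restricted groups} $i)$ cannot directly align projections across different essential-codimension layers; the resolution is to stay inside a single layer $\cV_k(V_A)$ or $\cP_k(|A|)$, where the essential codimensions cancel as above and the remark applies.
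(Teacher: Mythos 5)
Your proof is correct, and the well-definedness half coincides with the paper's argument: you show $B^*B\in\cP_k(A^*A)$, invoke Corollary \ref{teo sobre AvH2} with $f(\lambda)=\lambda^{1/2}$ to get $|B|-|A|\in\fS$, and then write $V_B-V_A=(B-A)|B|^\dagger+A(|B|^\dagger-|A|^\dagger)\in\fS$ using Lemma \ref{ab y moore penrose}; this is exactly what the paper does. For surjectivity you take a genuinely different (though closely related) route. The paper reduces to $k=0$ and lifts through the transitive group actions on the targets: given $C\in\cP_0(|A|)$ it writes $C=G|A|G^*$ via Theorem \ref{estru pk real analitica} and uses Lemma \ref{smooth unitaries} to produce $U\in\cU_\fS$ with $U(R(|A|))=R(C)$, taking $B=(V_AU^*)C$; given $V\in\cV_0(V_A)$ it writes $V=UV_AW^*$ with $U,W\in\cU_\fS$ via Remark \ref{on connected comp part isom} and takes $B=UAW^*$ outright. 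You instead work at general $k$ and in both cases manufacture the missing polar factor by aligning initial projections with Remark \ref{restricted groups} $i)$, using only the nonemptiness of $\cV_k(V_A)$ and $\cP_k(|A|)$ plus additivity of the essential codimension; your bookkeeping ($[Q_0:P]=[Q:P]=-k$, hence $[Q_0:Q]=0$) is right, and the products $B=WD$, $B=XD$ do have the claimed polar decompositions because $N(W)=N(D)$ forces $B^*B=D^2$ and $R(D)\subseteq N(W)^\perp$. The two arguments are of comparable length; yours avoids Lemma \ref{smooth unitaries} and the reduction to $k=0$ (which in the paper implicitly uses the identifications $\cP_k(|A|)=\cP_0(|B^{(k)}|)$ and $\cV_k(V_A)=\cV_0(V_{B^{(k)}})$), while the paper's version lets the already-established orbit structure of the targets do the work and yields the explicit lifts $B=UAW^*$.
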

\begin{proof} Without loss of generality we can assume that $k=0$.

\medskip
\noi $i)$ If $B\in\cC_0(A)$, then $A-B\in\fS$ and $[P_{N(A)}:P_{N(B)}]=0$.
Since $A-B\in\fS$ then $A^*A-B^*B\in\fS$ and since 
$N(A^*A)=N(A)$ and $N(B^*B)=N(B)$ then $B^*B\in \cP_0(A^*A)$.
By Corollary \ref{teo sobre AvH2} we see that $|B|-|A|=(B^*B)^{1/2}-(A^*A)^{1/2}\in\fS$, since
$f(x)=x^{1/2}$ is an operator monotone function on $[0,\infty)$. Since $N(|B|)=N(B^*B)$ and $N(A^*A)=N(|A|)$ we get that 
$\alpha(B)=|B|\in\cP_0(|A|)$. If $C\in\cP_0(|A|)$ then, by item $i)$ in Theorem \ref{estru pk real analitica} we see that there exists $G\in\cG\ell_\fS$ such that $C=G|A|G^*$. By Lemma \ref{smooth unitaries} there exists $U\in\cU_\fS$ such that $U(R(|A|))=G(R(|A|))=R(G|A|)$. If we let $B=(V_AU) C=(V_AU^*) G|A|G^*$, then $V_B=V_AU$ and $|B|=G|A|G^*=C$ are the polar factor and the modulus corresponding to the polar decomposition of $B$. In particular, $V_B-V_A\in \fS$ and $|B|-|A|\in\fS$; then, 
$A-B=V_A|A|-V_B|B|=V_A(|A|-|B|)+(V_A-V_B)|B|\in\fS$ and moreover, $R(B)=R(A)$. Therefore, $B\in \cC_0(A)$ is such 
that $\alpha(B)=C$.

\medskip
\noi $ii)$ If $B\in\cC_0(A)$, then by the first part of the proof, we have that $|B|\in\cP_0(|A|)$. In particular,
$|A|-|B|\in\fS$ and then $|A|^\dagger -|B|^\dagger \in\fS$, by Lemma \ref{ab y moore penrose}.
Hence, $V_A-V_B=A|A|^\dagger-B|B|^\dagger=A(|A|^\dagger -|B|^\dagger )+ (A-B)|B|^\dagger\in\fS$.
On the other hand, observe that $0=[P_{N(A)}:P_{N(B)}]=
[P_{N(V_A)}: P_{N(V_B)}]$
 so $v(B)=V_B\in\cV_0(V_A)$.
If $V\in\cV_0(V_A)$ then, by Remark \ref{on connected comp part isom}, we see that there exist $U,\,W\in\cU_\fS$ such that $V=UV_AW$. If we set $B=UV_AW (W^*|A|W)$, then $V_B=UV_AW=V$ and $|B|=W^*|A|W$ are the polar factor and the operator modulus corresponding to the polar decomposition of $B$. 
Since $\cU_\fS\subset \cG\ell_\fS$ then $B=UAW\in \cC_0(A)$ is such that $v(B)=V_B=V$.
\end{proof}

\begin{rem}
We remark that 
since $v(B)=V_B=B|B|^\dagger$ then, using similar arguments to those considered for the Moore-Penrose map and Corollary \ref{hay cont en p0}, we can show that $\cC_k(A)$ are  maximal subsets (of the metric space $(\cC\cR\cap (A+\fS),d_\fS)$) in which the polar factor becomes a continuous map, for $k \in \mathbb{J}_A$. 
\end{rem}

Under the same notation of Lemma \ref{lem sobre a y v} we now describe the structure of the fibers.

\begin{lem}\label{fibras sub}
Given  $C_0 \in \cP_k(|A|)$ and  $V_0 \in \cV_k(V_A)$, then
\begin{itemize}
\item[i)] $\alpha^{-1}(C_0):=\{ V C_0  \in \cC_k   :    V  \in \cV_k(V_A),  \, N(V)=N(C_0)       \}$; 
 \item[ii)] $v^{-1}(V_0)= \{  V_0C \in \cC_k : C\in \cP_k(|A|),\   N(C)^\perp = R(C)=N(V_0)^\perp \}$.
 \end{itemize}
Furthermore,  both fibers are submanifolds of $A + \fS$. 
\end{lem}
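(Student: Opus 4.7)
The plan is to first verify the explicit set-theoretic characterizations of both fibers, and then equip each with a real analytic submanifold structure by realizing it as an orbit of a suitable Banach--Lie group action.

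For the characterization in $i)$, if $B \in \alpha^{-1}(C_0)$, polar decomposition gives $B = V_B|B| = V_B C_0$, where by Lemma \ref{lem sobre a y v} we have $V_B \in \cV_k(V_A)$, and $N(V_B) = N(B) = N(|B|) = N(C_0)$. Conversely, given $V \in \cV_k(V_A)$ with $N(V) = N(C_0)$, set $B := V C_0$. Since $R(C_0) = N(C_0)^\perp = N(V)^\perp$, $V$ is isometric on $R(C_0)$, so $B^*B = C_0 V^*V C_0 = C_0 P_{N(C_0)^\perp} C_0 = C_0^2$, giving $|B| = C_0$ and $V_B = V$. The membership $B \in \cC_k(A)$ then follows from $B - A = (V - V_A)C_0 + V_A(C_0 - |A|) \in \fS$, together with $N(B) = N(C_0)$ yielding $[P_{N(B)}:P_{N(A)}] = [P_{N(C_0)}:P_{N(|A|)}] = k$. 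Part $ii)$ is established symmetrically, noting that if $B = V_0 C$ with $R(C) = N(V_0)^\perp$, then $N(B) = N(C) = N(V_0)$, and $B^*B = CV_0^*V_0 C = C P_{R(C)} C = C^2$, so $|B| = C$ and $V_B = V_0$.

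For the submanifold structure of $\alpha^{-1}(C_0)$, I realize it as an orbit of the left action of $\cU_\fS$ on $\cC_k(A)$ by $U \cdot B = UB$, which preserves both the operator modulus and the essential codimension. Fixing $B_0 \in \alpha^{-1}(C_0)$, transitivity on the fiber amounts to extending the isometry $V_{B_0}x \mapsto V_{B_1}x$ (for $x \in R(C_0)$) to a unitary in $\cU_\fS$; this is possible because $V_{B_0}, V_{B_1} \in \cV_k(V_A)$ forces $P_{R(V_{B_0})} - P_{R(V_{B_1})} \in \fS$ with vanishing essential codimension, so the required extension exists by Remark \ref{restricted groups}\,$i)$. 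The isotropy at $B_0$ is $\{U \in \cU_\fS : U|_{R(V_{B_0})} = I\}$, a Banach--Lie subgroup with complemented Lie algebra $\{X \in \fS_{ah} : XP_{R(V_{B_0})} = 0\}$. An argument parallel to Lemma \ref{local cross sections} and Theorem \ref{estruct variedad y esp homog} then upgrades the homogeneous space structure to that of a real analytic submanifold of $A + \fS$.

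For $v^{-1}(V_0)$, I use the parametrization $C \mapsto V_0 C$ from $\cP_{V_0} := \{C \in \cP_k(|A|) : N(C) = N(V_0)\}$ onto $v^{-1}(V_0)$. Since for $G \in \cG\ell_\fS$ one has $N(GCG^*) = (G^*)^{-1}(N(C))$, the subset $\cP_{V_0}$ is precisely the congruence orbit of any fixed $C_0 \in \cP_{V_0}$ under the Banach--Lie subgroup $\cG_{V_0} := \{G \in \cG\ell_\fS : G(N(V_0)^\perp) = N(V_0)^\perp\}$; by arguments analogous to Theorem \ref{estru pk real analitica} adapted to this subgroup, $\cP_{V_0}$ acquires the structure of a real analytic submanifold of $\cP_k(|A|)$. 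The map $C \mapsto V_0 C$ is real analytic and injective on $\cP_{V_0}$ (since $V_0$ is isometric on $R(C) = N(V_0)^\perp$), and a direct tangent map computation shows it is an immersion with complemented range, so its image $v^{-1}(V_0)$ is a real analytic submanifold of $A + \fS$.

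The main obstacle will be the precise verification that the isotropy groups are Banach--Lie subgroups with complemented Lie algebras, and the construction of continuous local cross sections for the associated quotient maps --- this last step is essential to ensure that the homogeneous space topology agrees with that induced from $A + \fS$, and will require explicit constructions along the lines of those in Lemma \ref{local cross sections} and the proof of Theorem \ref{estru pk real analitica}, now adapted to the restricted subgroups $\{U \in \cU_\fS : U|_{R(V_{B_0})} = I\}$ and $\cG_{V_0}$.
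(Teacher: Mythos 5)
Your set-theoretic identification of the two fibers is correct and in fact more detailed than the paper's (which treats it as immediate), and your overall strategy -- realize each fiber as an orbit of a restricted group and then verify the immersion/complementation conditions to get a submanifold of $A+\fS$ -- is sound. The execution differs from the paper's in how the manifold structure is produced. For $\alpha^{-1}(C_0)$ the paper does not set up a new action on the fiber: it transports the known submanifold structure of $\{V\in\cP\cI: V-V_0\in\fS,\ N(V)=N(V_0)\}\subset V_A+\fS$ (cited from the earlier work on $\cJ$-Stiefel manifolds) through the bijection $VC_0\mapsto V$, and then only has to check that the inclusion into $A+\fS$ is an immersion, identifying the tangent space as $\{XVC_0: X\in\fS_{ah}\}$ and exhibiting a closed complement via the invertible map $Z\mapsto ZA^\dagger$. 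For $v^{-1}(V_0)$ the paper likewise identifies the fiber with $\cP_0(A_0)$ for a positive \emph{invertible} $A_0$ acting on the reduced space $\cS=N(V_0)^\perp$, so that Theorem \ref{estru pk real analitica} applies directly, and then runs the same immersion argument using $S_A(T)=V_A^*T|A|^\dagger$. Your route instead rebuilds the homogeneous structure from scratch on the fiber itself (left $\cU_\fS$-action for $\alpha^{-1}(C_0)$, congruence action of the stabilizer $\cG_{V_0}$ for $v^{-1}(V_0)$); this is self-contained but obliges you to redo the cross-section and Banach--Lie subgroup verifications that the paper obtains by citation, exactly as you flag at the end. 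Two points in your sketch deserve explicit care: in the transitivity argument for $\alpha^{-1}(C_0)$, the unitary you need is $U=V_{B_1}V_{B_0}^*+W P_{R(V_{B_0})^\perp}$ where $W\in\cU_\fS$ conjugates $P_{R(V_{B_0})}$ to $P_{R(V_{B_1})}$, and you should record that $U-I=(V_{B_1}-V_{B_0})V_{B_0}^*+(W-I)P_{R(V_{B_0})^\perp}+(V_{B_0}V_{B_0}^*-I)+P_{R(V_{B_0})^\perp}\in\fS$ (the last two terms cancel); and in part $ii)$ the injective immersion with complemented range only yields a submanifold of $A+\fS$ after checking that the manifold topology on the image agrees with the $d_\fS$-relative topology, which is the hypothesis needed to invoke \cite[Prop. 8.7]{Up85}. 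Neither point is an obstruction, so the proposal is essentially a valid alternative proof.
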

\begin{proof}
We may assume that $k=0$.

\medskip

\noi $i)$ Clearly, we have 
\begin{align}
\alpha^{-1}(C_0) &   =\{ V C_0  \in \cC_0   :    V  \in \cV_0(V_A),  \, N(V)=N(C_0)       \} \nonumber \\
& \simeq \{   V \in \cP \cI   :   V- V_0   \in \fS , \, N(V)=N(V_0)     \}, \label{2ea linea}
\end{align}
where $V_0$ is any partial isometry satisfying  $N(V_0)=N(C_0)$ and $V_0\in\cV_0(V_A)$.
 The set in  Eq. \eqref{2ea linea} is a submanifold of $V_A + \fS$ (see \cite[Corol. 3.5]{EC10}). The bijection above is given by $VC_0 \mapsto V$, and it  induces a manifold structure on $\alpha^{-1}(C_0)$. We now prove that $\alpha^{-1}(C_0)$  with the aforementioned manifold structure is  a submanifold of $A+ \fS$. For we first observe that it is not difficult to see $\alpha^{-1}(C_0)$ has the topology defined by the metric $d_\fS$, and the inclusion map  $\iota: \alpha^{-1}(C_0) \to A+ \fS$ is real analytic. To prove that tangent spaces of $\alpha^{-1}(C_0)$ are closed and complemented in $\fS$, we identify the tangent space at $VC_0$ as
$$
(T\alpha^{-1}(C_0))_{VC_0}=\{ XVC_0 :   X \in \fS_{ah} \}.
$$
Without loss of generality, we now assume that $A=VC_0$. We can give the following alternative 
description of the tangent space 
$$
(T\alpha^{-1}(C_0))_A=\left\{  \begin{pmatrix}   Z_{11}  &    0  \\   Z_{21}   & 0   \end{pmatrix}:   Z_{11}A^\dagger  \in P \fS_{ah} P, \, Z_{21} \in P^\perp \fS  Q \right\}\,,
$$
where the above matrix decomposition is in terms of $Q=P_{N(A)^\perp}$ and $P=P_{R(A)}$ like in Eq. \eqref{eq tang cc0}. So we only have to prove that $\Sigma:=\{  Z :  Z A^\dagger \in P\fS_{ah}P \}$ is a closed complemented subspace of $P \fS  Q$. Using the invertible map $\cR_{A^{\dagger}}: P \fS Q \to P \fS P$, $\cR_{A^\dagger}(Z)=ZA^\dagger$, this is equivalent to the fact that $P \fS_{ah}P$ is  a (real) closed complemented subspace of $P\fS P$. Hence the inclusion map $\iota$ is an immersion, and 
$\alpha^{-1}(C_0)$ is a submanifold of $A + \fS$    (see \cite[Prop. 8.7]{Up85}).

\medskip

\noi $ii)$ By Remark \ref{on connected comp part isom} there are unitaries $U,W \in \cU_\fS$ such that $V_0=UV_AW^*$.  If 
$\cS=N(V_0)^\perp$ and $E=P_\cS$,  then  $W^*EW=P_{N(A)^\perp}$. 
Take the positive invertible operator $A_0:=W|A|W^*|_\cS: \cS \to \cS$. Then the fiber can be computed as
\begin{equation}\label{2 da fibra v}
v^{-1}(V_0)  = \{  V_0C \in \cC_0(A) : C\in \cP_0(|A|),\   N(C)^\perp = R(C)=\cS \}
 \simeq \cP_0 (A_0).
\end{equation} 
Here we consider $\cP_0(A_0)$ as a subset of the positive closed range operators acting on $\cS$, and for its definition we  use the ideal $\fS(\cS)=\{  EX|\cS : X \in \fS \}\simeq E\fS E$. The manifold structure of this fiber is induced by the bijection $v^{-1}(V_0) \simeq \cP_0 (A_0)$, which is given by the map $V_0C \mapsto C|_\cS$. Using the characterization in Lemma \ref{lem sobre la orb posit} of $\cP_0(|A|) $ and $\cP_0(A_0)$ one can verify that this map  is well defined, and it has the inverse $C_1 \mapsto V_0(C_1 \oplus 0)$. We follow similar steps to those of the previous item. Again the topology of $v^{-1}(V_0)$ is clearly given by the metric $d_\fS$, and the inclusion map $\iota:v^{-1}(V_0) \to A + \fS$ is real analytic. 
The tangent space at $V_0C$ is given by
$$
(T v^{-1}(V_0))_{V_0C}=\{   V_0   (XC + CX^*) :   X \in E\fS E   \}.
$$
For simplicity, we assume that $V_0 C=A$. This implies that $V_0=V_A$, $C=|A|$, $\cS=N(A)^\perp$
 and $E=Q=P_{N(A)^\perp}$. Next define the continuous invertible map $S_A: P \fS Q  \to Q \fS Q$, 
 $S_A(T)=V_A^* T |A|^\dagger$, which has the following property
 $$
 S_A ((T v^{-1}(V_A))_{A})= \{   X   +   |A| X^*  |A|^\dagger :    X \in Q \fS Q  \}.
 $$
The latter subspace we have already proved to be closed and complemented in $Q \fS Q$ (see the proof of Theorem \ref{estru pk real analitica} $ii)$). These facts show that the inclusion map is an immersion, and hence 
the fiber is a submanifold of $A + \fS$. 
\end{proof}

Next, we state our main result on the operator modulus and polar factor. 

\begin{teo}\label{teo hay fibras} 
Let $A \in \cC \cR$ with polar decomposition $A=V_A|A|$, $k \in \mathbb{J}_A$ and $\fS$ be a symmetrically-normed ideal. 
The following assertions hold:
\begin{enumerate}
\item[i)] $\alpha: \cC_k(A) \to \cP_k(|A|)$, $\alpha(B)=|B|$, is a real analytic fiber bundle.
\item[ii)] $v: \cC_k(A) \to \cV_k(V_A)$, $v(B)=V_B$, is a real analytic fiber bundle.
\end{enumerate}
\end{teo}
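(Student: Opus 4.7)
The plan is to verify the definition of real analytic fiber bundle directly, building explicit local trivializations out of the homogeneous space structures established earlier together with Lemma \ref{smooth unitaries}. Surjectivity of $\alpha$ and $v$ is given by Lemma \ref{lem sobre a y v}, and Lemma \ref{fibras sub} shows that the fibers are submanifolds of $A+\fS$.

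First I would establish the real analyticity of both maps. For $\alpha$, I factor $\alpha(B)=(B^*B)^{1/2}$: the map $B\mapsto B^*B$ is a polynomial of degree two in $B-A\in\fS$, and using $N(B^*B)=N(B)$ one checks it carries $\cC_k(A)$ into $\cP_k(A^*A)$; then the square root $\cP_k(A^*A)\to \cP_k(|A|)$ is real analytic by Corollary \ref{teo sobre AvH2}. For $v$ I use $v(B)=B\,\alpha(B)^\dagger$: by Theorem \ref{estru pk real analitica} $iii)$ the set $\cP_k(|A|)$ is a real analytic submanifold of $\cC_k(|A|)$, and the Moore-Penrose inverse on $\cC_k(|A|)$ is real bianalytic by Theorem \ref{teo pseudo in real analytic}. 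Combined with the bilinearity of operator multiplication, this yields the real analyticity of $v$ as a map into the submanifold $\cV_k(V_A)\subset V_A+\fS$.

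To trivialize $\alpha$ around $C_0\in\cP_k(|A|)$, I combine the principal bundle structure with Lemma \ref{smooth unitaries}. By Theorem \ref{estru pk real analitica} the set $\cP_k(|A|)$ is a real analytic homogeneous space of $\cG\ell_\fS$, so there is a real analytic local cross section $\tau:\cV\to \cG\ell_\fS$ with $\tau(C_0)=I$ and $\tau(C)C_0\tau(C)^*=C$. Applying Lemma \ref{smooth unitaries} with $\cS=N(C_0)^\perp$ produces a real analytic map $W(C):=U_{\tau(C)}\in \cU_\fS$ satisfying $W(C)(N(C_0)^\perp)=N(C)^\perp$ and $W(C_0)=I$. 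The trivialization I would define is
$$
\Psi(B)=\bigl(\,|B|,\ V_B\,W(|B|)\,C_0\,\bigr),\qquad \Psi^{-1}(C,V C_0)=V\,W(C)^*\,C\,.
$$
An essential codimension calculation shows that $V_B\,W(|B|)\in\cV_k(V_A)$ and $N(V_B\,W(|B|))=N(C_0)$, so $V_B\,W(|B|)\,C_0$ genuinely lies in the fiber $\alpha^{-1}(C_0)$; conversely, a short computation with $W(C)\,P_{N(C_0)^\perp}\,W(C)^*=P_{N(C)^\perp}$ shows that $V\,W(C)^*\,C$ has operator modulus $C$ and polar factor $V\,W(C)^*$, which gives $\Psi\circ\Psi^{-1}=\mathrm{id}$ and $\Psi^{-1}\circ\Psi=\mathrm{id}$. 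Real analyticity of $\Psi$ and $\Psi^{-1}$ then reduces to real analyticity of $\alpha$, $v$, $\tau$, and the assignment from Lemma \ref{smooth unitaries}.

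The trivialization for $v$ is parallel, now using the homogeneous space description $\cV_k(V_A)=(\cU_\fS\times\cU_\fS)\cdot V_0$ from Remark \ref{on connected comp part isom}. I would pick a real analytic local cross section $s=(U_1,U_2):\cW\to\cU_\fS\times\cU_\fS$ around $V_0$ with $s(V_0)=(I,I)$ and $U_1(V)\,V_0\,U_2(V)^*=V$, and set
$$
\Psi(B)=\bigl(\,V_B,\ V_0\cdot U_2(V_B)^*\,|B|\,U_2(V_B)\,\bigr),\qquad \Psi^{-1}(V,V_0C_1)=V\,U_2(V)\,C_1\,U_2(V)^*\,.
$$
The main obstacle throughout is the careful bookkeeping of null spaces and ranges under these conjugations and the verification that the constructed candidates satisfy the uniqueness property of the polar decomposition. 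Each such check reduces to the identity $V_0^*V_0=P_{N(V_0)^\perp}$, Lemma \ref{ab y moore penrose}, and the fact (Remark \ref{restricted groups} $i)$) that elements of $\cU_\fS$ conjugate projections within the same essential codimension class.
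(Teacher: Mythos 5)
Your proposal is correct and follows essentially the same route as the paper: real analyticity of $\alpha$ via Corollary \ref{teo sobre AvH2}, $v(B)=B|B|^\dagger$ via Theorems \ref{estru pk real analitica} and \ref{teo pseudo in real analytic}, and the identical local trivializations built from local cross sections of the congruence action (resp.\ the $\cU_\fS\times\cU_\fS$-action) combined with Lemma \ref{smooth unitaries}. The only (harmless) difference is that you obtain real analyticity of $\alpha$ by composing $B\mapsto B^*B$ with the square root on $\cP_k(A^*A)$ directly, whereas the paper lifts $\alpha$ through the submersion $\pi_0$ before applying Corollary \ref{teo sobre AvH2}.
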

\begin{proof}
We may assume again that $k=0$. Set $\cC_0=\cC_0(A)$, $\cP_0=\cP_0(|A|)$ and $\cV_0=\cV_0(V_A)$.

\medskip
\noi $i)$ In this case we show that $\alpha$ is a real analytic function in a neighborhood of $A\in\cC_0(A)$. 
Since $\pi_0:\cG\ell_\fS\times \cG\ell_\fS\to \cC_0$ given by $\pi_0(G,K)=G AK^{-1}$ is a real analytic submersion, then
it is enough to show that $\beta=\alpha\circ \pi_0:\cG\ell_\fS\times \cG\ell_\fS\to \cP_0$ is a real analytic function around the point $(I,I)\in \cG\ell_\fS\times \cG\ell_\fS$.
We show that this is the case by expressing $\beta$ as a composition of real analytic functions.
Set $\cS=N(A)^\perp$ and let $\cG\ell_\fS \ni K\mapsto U_K\in \cU_\fS$ be the real analytic map from 
Lemma \ref{smooth unitaries}, such that $U_K(N(A)^\perp)=K(N(A)^\perp)$.
Then the map $\delta:\cG\ell_\fS\times \cG\ell_\fS\to \cC_0$ given by $\delta(G,K)=\pi_0(G,K)\,U_K=(GAK^{-1})U_K$ is a real analytic map. Notice that $\cG\ell_\fS\times \cG\ell_\fS\ni (G,K)\to |\delta(G,K)|^2$ is a real analytic map taking values in $\cP_0(|A|^2)$. Indeed, $|(GAK^{-1})U_K|^2=(K^{-1}\,U_K)^* |GA|^2 K^{-1}\,U_K$ and $|GA|^2=A^*|G|^2 A$ show that $|GA|^2-|A|^2\in \fS$ and $N(|GA|)=N(|A|)$, so that by Lemma \ref{lem sobre la orb posit}, we get $|(GAK^{-1})U_K|^2 \in\cP_0(|GA|^2)=\cP_0(|A|^2)$.
By Corollary \ref{teo sobre AvH2} applied to the function $f(\lambda)=\lambda^{1/2}$
 we conclude that the map $(G,K)\mapsto |\delta(G,K)|=(U_K^*|GAK^{-1}|^2 U_K)^{1/2}=U_K^*|GAK^{-1}|\,U_K$
is real analytic. Therefore, $\beta(G,K)=\alpha\circ\pi_0(G,K)=U_K |\delta(G,K)|U_K^*$ is a real analytic map. Hence, $\alpha$ is real analytic.

 To prove that $\alpha$ is a real analytic fiber bundle, we also need to show here that $v$ is a real analytic function. To this end notice that $v(B)=V_B=B|B|^\dagger$, for $B\in\cC_0$. On the one hand, $\cC_0\ni B\mapsto |B|\in \cP_0$ is real analytic by the first part of the proof. 
On the other hand, since $\cP_0\subset \cC_0(|A|)$ is a submanifold by Theorem \ref{estru pk real analitica} 
then, $\cP_0(|A|)\ni |B|\mapsto |B|^\dagger$ is a real analytic map, by Theorem \ref{teo pseudo in real analytic}.
Hence,  $v(B)=B|B|^\dagger$ is a real analytic map.

We now show that $\alpha$ is a real analytic fiber bundle. Recall that  the fiber at $C_0\in\cP_0$ is given by $\alpha^{-1}(C_0)  =\{ V C_0  \in \cC_0   :    V  \in \cV_0,  \, N(V)=N(C_0)       \}$.
We consider the action
$\cG\ell_\fS\ni G\mapsto GC_0G^*\in \cP_0$ which, by Theorem \ref{estru pk real analitica}, is a  submersion. Hence, there exist an open set $\cW\subset \cP_0$ with $C_0\in \cW$, and an analytic map (cross-section) $\gamma:\cW\rightarrow \cG\ell_\fS$ such that 
$\gamma(C) \, C_0\,\gamma(C)^*=C$, for $C\in\cW$  (\cite[Corol. 8.3]{Up85}). By Lemma \ref{smooth unitaries} there exists an analytic map
$\cW\ni C\mapsto U_{\gamma(C)}$ such that $U_{\gamma(C)}(R(C_0))=\gamma(C)(R(C_0))=R(C)$. Since $\alpha$ and $v$ are real analytic, and $\alpha^{-1}(C_0)$ is a submanifold of $A+ \fS$ by  Lemma \ref{fibras sub}, the map given by
$$
\alpha^{-1}(\cW)\to \cW\times \alpha^{-1}(C_0)\ , \ \ B=V_B|B|\mapsto (|B|\,,\,\,(V_B \,U_{\gamma(|B|)})\, C_0)
$$
is real analytic, 
and its inverse 
$$\cW\times \alpha^{-1}(C_0)\ni (C,VC_0)\mapsto VU_{\gamma(C)}^*\,C\in \alpha^{-1}(\cW)$$
 is also real analytic. We have used that $\alpha((V_B \,U_{\gamma(|B|)})\, C_0)=C_0$ and $v((V_B \,U_{\gamma(|B|)})\, C_0)=V_B \,U_{\gamma(|B|)}$, for $B\in\alpha^{-1}(\cW)$.

\medskip

\noi $ii)$ By the previous item, we already know that $v$ is a real analytic map. Recall that for $V_0\in \cV_0(V_A)$, we have that $v^{-1}(V_0)  = \{  V_0C \in \cC_0 : C\in \cP_0,\   N(C)^\perp = R(C)=N(V_0)^\perp \}$.
%
Consider the action $\cU_\fS\times \cU_\fS\ni (U,W)\mapsto UV_0W\in\cV_0(V_A)$ which, by Remark \ref{on connected comp part isom}, is a  submersion. Then, there exist an open set $\cY\subset \cV_0(V_A)$ with $V_0\in\cY$ and a real analytic cross section 
$\eta:\cY\to \cU_\fS\times \cU_\fS$. So if $\eta(V)=(\eta_1(V),\eta_2(V))$, then
$\eta_1(V) \,V_0\,\eta_2(V)^*=V$, for $V\in\cY$.
Using that $v^{-1}(V_0)$ is a submanifold of $A + \fS$ by  Lemma  \ref{fibras sub}, the map given by
$$
v^{-1}(\cY)\to \cY\times v^{-1}(V_0)\ , \ \ B=V_B|B|\mapsto (V_B,\, V_0 \, 
(\eta_2(V_B)^* \,|B| \,\eta_2(V_B)))
$$
is real analytic with inverse given by 
$$
\cY\times v^{-1}(V_0)\ni (V,B)\mapsto V\,\eta_2(V)\, |B|\,\eta_2(V)^*\in v^{-1}(\cY)
$$
 which is also real analytic.
\end{proof}

\begin{rem} We notice that the maps $\alpha$ and $v$ in Theorem \ref{teo hay fibras} satisfy some compatibility relations with respect to the Moore-Penrose inverse. Indeed,
 consider $A\in \cC\cR$ and $k\in\mathbb J_A=-\mathbb J_{A^*}$. Let $\alpha_{A^*}:\cC_{-k}(A^*)\rightarrow \cP_{-k}(|A^*|)$  and $v_{A^*}:\cC_{-k}(A^*)\rightarrow \cV_{-k}(V_A^*)$ be defined as above, where $A^*=V_A^*|A^*|$ is the polar decomposition of $A^*$. 
We further consider $\alpha_{A^\dagger }:\cC_k(A^\dagger)\rightarrow \cP_k(|A^\dagger|)$ and $v_{A^\dagger }:\cC_k(A^\dagger )\rightarrow \cV_k(V_{A^\dagger})$.
Let $\mu:\cC_k(A)\rightarrow \cC_k(A^\dagger)$ be given by $\mu(B)=B^\dagger$.
Then, notice that for $B\in\cC_k(A)$ we get that
$$
\alpha_{A^\dagger }(\mu(B))=|B^\dagger |=|B^*|^\dagger=\mu(\alpha_{A^*}(B^*)) \ \ , \ \ 
v_{A^\dagger }(\mu(B))=V_B^*=\mu(v_{A^*}(B^*))\,.
$$
\end{rem}

\section{Appendix}\label{apendixity}

In this section we present a proof of Lemma \ref{lem conv abs}.
Hence, we let $\fS$ be a symmetrically-normed ideal, $\nu$ be a positive Borel measure on $(0,\infty)$ and we let $h:[0,\infty)\rightarrow \fS_{sa}$ 
be a function satisfying: 
\begin{enumerate}
\item  $\|h(t)\|_\fS\leq q(t)$, where $q(t)$ is a bounded, continuous, positive and non-increasing function such that  $\int_{0}^{\infty} q(t)\ d\nu(t)<\infty$;
\item For $\delta>0$ then $\| h(t+\delta)-h(t)\|_{\fS}\leq \alpha \ \delta \ r(t)$,  where $\alpha>0$ and $r:[0,\infty)\rightarrow [0,\infty)$ is a  continuous and non-increasing function such that $$\lim_{t\rightarrow \infty}r(t)=0 \ \ \text{and} \ \ \int_0^\infty r(t)\ d\nu(t)<\infty\,.$$
\end{enumerate}
Recall that the operator $\int_0^\infty h(t)\ d\nu(t)\in \cB(\cH)_{sa}$ is determined by the identity
$$
\langle \int_0^\infty h(t)\ d\nu(t) \ x \, , \, y\rangle =\int_0^\infty \langle h(t)\ x \, , \, y \rangle \ d\nu(t)\quad , \quad x \, , \ y\in \cH\,.
$$
Since $\|\,\cdot\,\|\leq \|\,\cdot\,\|_{\fS}$ then item \ref{item2} above shows that $\int_0^\infty \|h(t)\|\ d\nu(t)<\infty$. Then, the previous facts imply that for every $p\geq 1$, 
$$
\left\|\int_0^\infty h(t)\ d\nu(t)\right\|\leq \int_0^\infty \|h(t)\|\ d\nu(t) \quad , \quad \int_0^\infty h(t)\ d\nu(t)=
\sum_{m=1}^\infty \int_{\left [0\, , \, \frac{1}{2^p}\right )} h(\frac{m-1}{2^p}+t)\ d\nu(t)
\,,
$$ where the series converges in the operator norm. We now introduce the sequence $$R_p=\sum_{m=1}^\infty \nu\left ([\frac {m-1}{2^p}\, , \, \frac{m}{2^p})\right)\ h(\frac{m}{2^p})\in \fS_{sa}\,, \ p\geq 1\,,$$
where we have used that $\|h(t)\|_{\fS}\leq q(t)$ is a decreasing function, $$
\sum_{m=1}^\infty \nu\left ([\frac {m-1}{2^p}\, , \, \frac{m}{2^p})\right)\ \|h(\frac{m}{2^p})\|_\fS\leq 
\sum_{m=1}^\infty \nu\left ([\frac {m-1}{2^p}\, , \, \frac{m}{2^p})\right)\ q(\frac{m}{2^p})\leq 
\int_0^\infty q(t)\ d\nu(t)<\infty\,,$$ so that the series defining $R_p$ is absolutely convergent in $\fS$ (and hence determines an element in $\fS_{sa}$).
By item \ref{item3} above we get that for $m,\,p\geq 1$:
\begin{equation}\label{eq relac normis1}
\left \| h(\frac{m}{2^p})- h(\frac{m-1}{2^p}+t)\right \|\leq \left \| h(\frac{m}{2^p})-h(\frac{m-1}{2^p}+t)\right \|_{\fS}\leq \frac{\alpha}{2^p} \ r(\frac{m-1}{2^p}) \, , \ \ \text{for} \ \ t\in [0,\frac{1}{2^p})\,,\end{equation} where we have used that 
$ \frac{m}{2^p}=(\frac{m-1}{2^p}+t)+\delta$, for some $\delta\in [0,2^{-p})$, and that 
$r(\frac{m-1}{2^p}+t)\leq r(\frac{m-1}{2^p})$, for $t\in [0,2^{-p})$.
By the same item we also get that there exists $\eta\geq 1$ such that for $p\geq 1$ we have that 
\begin{equation}\label{comparac sum r int1}
\sum_{m=1}^\infty r(\frac{m-1}{2^p})\ \nu([\frac {m-1}{2^p}\, , \, \frac{m}{2^p}))\leq \eta\,\int_0^\infty r(t)\, d\nu(t)\,.
\end{equation}
Therefore, we get that 
\begin{equation}\label{eq hay conv en norm de rps}
\left \| R_p-\int_0^\infty h(t)\ d\nu(t) \right\|\leq \frac{\alpha\cdot \eta}{2^p}\int_0^\infty r(t)\ d\nu(t) \xrightarrow[p\rightarrow \infty]{} 0\,.
\eeq
We now show that $\{R_p\}_{p\geq 1}$ is a Cauchy sequence in $\fS$: indeed, for $p\geq 1$ we have that 
$$
R_p=\sum_{m=1}^\infty \left(\nu\left ([\frac {2m-2}{2^{p+1}}\, , \, \frac{2m-1}{2^{p+1}})\right) +\nu\left ([\frac {2m-1}{2^{p+1}}\, , \, \frac{2m}{2^{p+1}})\right) \right) \ h(\frac{2m}{2^{p+1}})
$$
and hence,
$$
R_{p+1}-R_{p}= 
\sum_{m=1}^\infty \nu\left ([\frac {2m-2}{2^{p+1}}\, , \, \frac{2m-1}{2^{p+1}})\right) \left ( h(\frac{2m-1}{2^{p+1}}) - h(\frac{2m}{2^{p+1}})\right)\,.
$$The previous identity implies that 
$$
\|R_{p+1}-R_p\|_{\fS}\leq 
\sum_{m=1}^\infty \nu\left ([\frac {2m-2}{2^{p+1}}\, , \, \frac{2m-1}{2^{p+1}})\right) \  \frac{\alpha}{2^{p+1}}\ 
 r(\frac{2m-1}{2^{p+1}})\leq \frac{\alpha\cdot \eta}{2^{p+1}} \ \int_0^\infty r(t)\ d\nu(t)\,.
$$ This last fact shows that $\{R_p\}_{p\geq 1}$ is a Cauchy sequence in $\fS$ so that it converges to an operator
$R\in\fS_{sa}$; but then, $\{R_p\}_{p\geq 1}$ also converges to $R$ in the operator norm. The previous comments together with Eq. \eqref{eq hay conv en norm de rps} imply that $R=\int_0^\infty h(t)\ d\nu(t)\in \fS_{sa}$. 
On the other hand, using Eq. \eqref{eq relac normis1} we have that
$$
\left | \|h(\frac{m-1}{2^p}+\delta)\|_\fS-\|h(\frac{m}{2^p})\|_\fS\right |\leq \left \| h(\frac{m-1}{2^p}+\delta)-h(\frac{m}{2^p})\right \|_{\fS}\leq \frac{\alpha}{2^p} \ r(\frac{m-1}{2^p}) \, , \ \ \text{for} \ \ \delta\in [0,\frac{1}{2^p})\,.
$$
Hence, using the previous fact and Eq. \eqref{comparac sum r int1} we get that 
\begin{equation}\label{eq la ultima eq1}
\left | 
\int_0^\infty \|h(t)\|\ d\nu(t)-\sum_{m=1}^\infty \nu\left ([\frac {m-1}{2^p}\, , \, \frac{m}{2^p})\right )\ \|h(\frac{m}{2^p})\|_\fS
\right |\leq \frac{\alpha\cdot \eta}{2^p}\ \int_0^\infty r(t)\ d\nu(t)\,.
\end{equation} Since $\|R_p-\int_0^\infty h(t)\ d\nu(t)\|_\fS\rightarrow 0$ when $p\rightarrow \infty$ then, given $\epsilon>0$ there exists $p_0\geq 1$ such that if $p\geq p_0$ then 
$\|\int_0^\infty h(t)\ d\nu(t)\|_\fS\leq \|R_p\|_\fS+\varepsilon$. On the other hand, Eq. \eqref{eq la ultima eq1}
shows that there exists $p_1\geq p_0$ such that for $p\geq p_1$ we get that 
$$
\|\int_0^\infty h(t)\ d\nu(t)\|_\fS\leq \|R_p\|_\fS+\varepsilon \leq \sum_{m=1}^\infty \nu\left ([\frac {m-1}{2^p}\, , \, \frac{m}{2^p})\right)\ \|h(\frac{m}{2^p})\|_\fS+\varepsilon\leq 
\int_0^\infty \|h(t)\|_\fS\ d\nu(t)+2\,\varepsilon\,.
$$

\subsection*{Acknowledgment}
This research was partially supported by  CONICET  (PIP 2021/2023 11220200103209CO),  ANPCyT (2015 1505/ 2017 0883) and FCE-UNLP (11X974).

{\small

}

\bigskip

{\sc (Eduardo Chiumiento)} {Departamento de  Matem\'atica \& Centro de Matem\'atica La Plata, FCE-UNLP, Calles 50 y 115, 
(1900) La Plata, Argentina  and Instituto Argentino de Matem\'atica, `Alberto P. Calder\'on', CONICET, Saavedra 15, 3er. piso,
(1083) Buenos Aires, Argentina.}     
                                               
\noi e-mail: {\sf eduardo@mate.unlp.edu.ar}   

\bigskip

{\sc (Pedro Massey)} {Departamento de  Matem\'atica \& Centro de Matem\'atica La Plata, FCE-UNLP, Calles 50 y 115, 
(1900) La Plata, Argentina  and Instituto Argentino de Mate\-m\'atica, `Alberto P. Calder\'on', CONICET, Saavedra 15, 3er. piso,
(1083) Buenos Aires, Argentina.}

\noi e-mail: {\sf massey@mate.unlp.edu.ar}

\end{document}